\definecolor{blue(munsell)}{rgb}{0.0, 0.5, 0.69}
\author{Ivan Di Liberti, Gabriele Lobbia, and Lurdes Sousa}
\thanks{The first author was supported by the Swedish Research Council (SRC, Vetenskapsr{\aa}det) under Grant No.~2019-04545. The research has received funding from Knut and Alice Wallenbergs Foundation through the Foundation's program for mathematics.\\
The third author thanks the partial support by the Centre for Mathematics of the University of Coimbra
- UIDB/00324/2020, funded by the Portuguese Government through FCT/MCTES}
\address{
Ivan Di Liberti: \newline
Department of Mathematics\newline
Stockholm University\newline
Stockholm, Sweden
\\[5pt]
Gabriele Lobbia: \newline
Department of Mathematics and Statistics\newline
Masaryk University, Faculty of Sciences\newline
Kotl\'{a}\v{r}sk\'{a} 2, 611 37 Brno, Czech Republic
\\[5pt]
Lurdes Sousa: \newline
Polythecnic of Viseu, Portugal\newline
CMUC, University of Coimbra, Portugal
}
\title{KZ-pseudomonads and Kan injectivity}
\keywords{2-category, Kan injectivity, KZ-pseudomonad, small object argument}
\theoremstyle{plain}
\newtheorem{constr}[thm]{Construction}
\newtheorem{notat}[thm]{Notation}
\newcommand{\catk}{\mathcal{K}} 
\newcommand{\cata}{\mathcal{A}} 
\newcommand{\cc}{\mathbf{C}} 
\newcommand{\linj}{\mathbf{WLInj}} 
\newcommand{\slinj}{\mathbf{LInj}} 
\newcommand{\maps}{\mathcal{H}} 
\newcommand{\ord}{\mathbf{Ord}}
\newcommand{\Cat}{\mathbf{Cat}}
\newcommand{\bicolim}{\textrm{bicolim}}
\def\op{\mathrm{op}}
\newcommand\ch{\mathcal {H}}
\def\LInj{\mathbf{LInj}}
\newcommand\hcomp{\circ}
\newcommand\unit{\delta}
\newcommand\sat{\text{sat}}
\def\red{\color{red}}
\def\blue{\color{black}}
\def\black{\color{black}}
\newcommand{\compX}{\mathbf{x}}
\begin{document}

\dedication{Dedicated to the memory of Marta Bunge (1938-2022)}

\maketitle

\begin{abstract}
We introduce the notion of Kan injectivity in 2-categories and study its properties. For an adequate $2$-category $\catk$,  we show that every set of morphisms $\maps$ induces a KZ-pseudomonad on $\catk$ whose $2$-category of pseudoalgebras is the locally full sub-2-category of all objects (left) Kan injective with respect to $\maps$ and morphisms preserving Kan extensions. The main ingredient is the construction of a (pseudo)chain whose appropriate ``convergence" is ensured by a small object argument.
\end{abstract}

\section*{Introduction}

A classical problem in category theory goes under the name of the \textit{orthogonal subcategory problem}. For $\maps$ a class of maps in a category $\cc$, we ask whether the full subcategory of orthogonal objects $\maps^\perp$ is reflective in $\cc$, that is, $\maps^\perp$ is the category of algebras of an idempotent monad.

There are several reasons to study orthogonal subcategories and their reflectivity, because many situations in mathematics can be reduced to an orthogonality class of objects. For example,
the set $\mathcal{H}$ of maps that specifies the orthogonality class can be understood as a set of axioms that the objects in the orthogonality class must satisfy (see \cite{adamek2006logic} for a theoretical approach to this motto, or \cite[1.33]{adamek_rosicky_1994}
for some practical examples of how it functions).
 Thus, orthogonality  offers a categorical tool to axiomatise convenient subcategories.

The orthogonal subcategory problem has a longstanding tradition and was approached by several authors.
Peter Freyd and Max Kelly \cite{freyd1972categories} provided what later became a standard reference on the topic. In \cite{kelly1980}, Kelly unified the work  of earlier authors, by providing a beautiful solution for this problem in a broad setting by means of the colimit of a transfinite sequence. This construction is quite in the same spirit of the celebrated \textit{small object argument}. In a more recent account, the work of Ji\v r\'{i} Ad\'amek and Ji\v r\'{i}  Rosick\'{y} \cite[Chap. 1.C]{adamek_rosicky_1994} gives a detailed description of the transfinite sequence in  locally presentable categories. Their technique  is very influential for our treatment.

\vskip1.5mm

The \textbf{aim of this paper} is to establish a similar result for a $2$-dimensional variation of the orthogonal subcategory problem which captures many relevant constructions of $2$-dimensional category theory. We will direct our study to  the interplay between Kan-injectivity and lax-idempotent pseudomonads (i.e. KZ-pseudomonads). They are natural substitutes for orthogonality and idempotent monads when working in 2-categories.

This work generalises the work of \cite{adamek2015kan} and introduces Kan injectivity in $2$-categories. An object $X$ is (left) Kan injective with respect to a map $h$ if every $f:\mbox{dom}(h)\to X$ can be extended to the codomain of  $h$  through a 2-cell

\[\begin{tikzcd}
	A & {A'} \\
	X
	\arrow["h", from=1-1, to=1-2]
	\arrow[""{name=0, anchor=center, inner sep=0}, "f"', from=1-1, to=2-1]
	\arrow[""{name=1, anchor=center, inner sep=0}, "{f/h}", from=1-2, to=2-1]
	\arrow["{\xi_f}", shorten <=5pt, shorten >=5pt, Rightarrow, from=0, to=1]
\end{tikzcd}\]
and such an extension is universal among the possible extensions; more precisely,  $(f/h, \xi_f)$ is the (left) Kan extension of $f$ along $h$.
Given a class $\mathcal{H}$ of 1-cells, we can form the locally full sub-2-category 
of all objects left Kan injective with respect to $\mathcal{H}$ and 1-cells preserving the corresponding Kan extensions.
There are two natural notions of Kan injectivity, the strongest one demanding that $\xi_f$ is invertible. We will show how they relate to each other in subsection \ref{comparison}, concluding that both notions give rise to the same Kan injective sub-2-categories. 
It is known that some relevant $2$-categories can be described via Kan injectivity (Example \ref{rexsinj}). 
We aim to push this observation and show that a vast class of interesting $2$-categories can be described via \textit{Kan injectivity axioms}. To do so, we will link Kan injective sub-2-categories to KZ-pseudomonads.

The concept of KZ-pseudomonad  in a 2-category (also known as lax-idempotent pseudomonad or KZ-doctrine), presented by Anders Kock in \cite{kock_1995} and by Zöberlein in \cite{zoberlein1976doctrines} generalises the one of idempotent monad in  ordinary categories. In \cite{bunge2006singular} Marta Bunge and Jonathan Funk characterised the 2-adjunctions giving rise to KZ-pseudomonads.
 In \cite{marm-wood_lax-idemp}, Francisco Marmolejo and Richard Wood showed that a KZ-pseudomonad in a 2-category and its algebras may be presented in terms of left Kan extensions. In particular,  their results can essentially be summarised as in Theorem \ref{thm:marm-wood}. This theorem was previously shown   for the particular case of order-enriched categories in \cite{carv-sousa_order-refl}, and is an important tool in the proof of  Theorem \ref{thm:linj-kz-adj}.

 \vskip1.5mm

Our \textbf{main result} (Theorem \ref{thm:linj-kz-adj}) shows that, in a locally small 2-category with small bicolimits, if all objects satisfy a convenient notion of smallness (see Definition \ref{def:small}), then for every set $\maps$ of morphisms of $\catk$ the inclusion of the corresponding Kan-injective sub-2-category $\LInj(\maps)$
$$\slinj(\maps)\hookrightarrow\catk$$
is the right adjoint 2-functor of a KZ-adjunction. To this end, we construct, for each object $X$, a transfinite (pseudo)chain (see Section~\ref{chain})  leading to the components of the unit of the KZ-adjunction. $\LInj(\mathcal{H})$ is then essentially the corresponding category of (pseudo)algebras. This chain generalizes the Kan injective reflection chain presented in \cite{adamek2015kan} for order-enriched categories. Here, the main factor allowing us to take off from the locally thin context of \cite{adamek2015kan} is the use of a special colimit, which we call \textit{coequinserter}.

\vskip1.5mm

The \textbf{structure of the paper} goes as follows. In \Cref{inizio} we start by introducing weak Kan injectivity and Kan injectivity (Definition \ref{definizionegenerale}). We put the notions into context, making the due comparison to the literature and proving the closedness under (bi)limits of $\LInj(\mathcal{H})$  (Proposition~\ref{prop:closed_bilimits}), a soundness result towards the main theorem.
In Proposition \ref{fromratorali}, we show that, in any $2$-category with bicocomma objects, for every class of maps $\mathcal{H}$ there exists a class of maps $\bar{\mathcal{H}}$ such that $\linj(\maps)=\slinj(\bar{\maps})$, where $\linj(\maps)$ refers to the weak Kan-injective sub-2-category. We also show that every class of morphisms saturated under Kan-injectivity contains
 all lari 1-cells and  is closed under composition, bicocomma, bipushouts and wide bipushouts (Proposition \ref{prop:saturation-irr}).

The subsequent three sections build the technology needed to prove our main theorem. In \Cref{sec2}, after recalling some results due to Marmolejo and Wood on the structure of KZ-pseudomonads, we formulate the result (Theorem \ref{thm:marm-wood}) which will serve as a basis for the proof of our main theorem. We finish this section with Corollary \ref{cor:Linj(H)} stating that,  for every class of 1-cells $\mathcal{H}$, if the inclusion $\slinj(\mathcal{H}) \hookrightarrow \catk$ is the right part of a KZ-adjunction, then the 2-category of pseudoalgebras of the corresponding KZ-monad is essentially $\slinj(\mathcal{H})$.

\Cref{chain} gives an explicit construction of a pseudochain (Construction \ref{kansmallobject}) which provides the candidate left pseudoadjoint to the forgetful functor $\slinj(\maps)\hookrightarrow\catk.$ It is shown that in this pseudochain $(x_{ij})_{i\leq j}$ every $x_{0i}$ belongs to the Kan injective saturation of $\mathcal{H}$.

\Cref{TheTheorem} contains our main theorem, which is the following:

\vskip1mm

\noindent {\sc Theorem} 
({\ref{thm:linj-kz-adj}})
{\em Let $\catk$ be a locally small 2-category with small bicolimits and such that all objects are small.  Then, for any set $\maps$ of 1-cells in $\catk$, the inclusion 2-functor $\slinj(\maps)\hookrightarrow\catk$ is the right part of a KZ-adjunction. Moreover, $\slinj(\maps)$ is the corresponding Eilenberg--Moore $2$-category, up to equivalence of 2-categories.}

\vskip1mm

\Cref{sec5}, our last section, applies the machinery developed in the paper to study a broad class of $2$-categories defined over Lex, the $2$-category of categories with finite limits. The main result (Theorem \ref{characteriation of exactness}) of the section relates Kan injectivity with the theory of lex-colimits by Garner and Lack \cite{lex} and offers an alternative characterization of $\Phi$-exactness.

\section{Kan injectivity} \label{inizio}

\subsection{Left Kan injectivity -- weak and strong} \label{onkinj}
Let $\catk$ be a 2-category, and $f\colon A\to X$ and $h\colon A\to A'$ two 1-cells in $\catk$. Recall that the \textbf{left Kan extension of $f$ along $h$} is defined as a 1-cell $f/h\colon A'\to X$ together with a 2-cell,
\begin{equation}\label{eq:Kan-ext}
\begin{tikzcd}
	A & {A'} \\
	X
	\arrow["h", from=1-1, to=1-2]
	\arrow[""{name=0, anchor=center, inner sep=0}, "f"', from=1-1, to=2-1]
	\arrow[""{name=1, anchor=center, inner sep=0}, "{f/h}", from=1-2, to=2-1]
	\arrow["{\xi_f}"{xshift=0.075cm,yshift=0.075cm}, shorten <=5pt, shorten >=3pt, Rightarrow, from=0, to=1]
\end{tikzcd}
\end{equation}
such that for any other 1-cell $g\colon A'\to X$ with a 2-cell $\alpha\colon f\Rightarrow g\circ h$ there exists a unique 2-cell $\overline{\alpha}\colon f/h\Rightarrow g$ such that we have the equality $\alpha=(\bar{\alpha}\hcomp h)\cdot \xi_f$:
\[\begin{tikzcd}
	A && {A'} && A && {A'} \\
	&&& {=} \\
	X &&&& X
	\arrow["h", from=1-1, to=1-3]
	\arrow[""{name=0, anchor=center, inner sep=0}, "f"', from=1-1, to=3-1]
	\arrow[""{name=1, anchor=center, inner sep=0}, "{f/h}"'{pos=0.4, xshift=0.175cm, yshift=0.06cm}, curve={height=12pt}, from=1-3, to=3-1]
	\arrow[""{name=2, anchor=center, inner sep=0}, "g", curve={height=-12pt}, from=1-3, to=3-1]
	\arrow[""{name=3, anchor=center, inner sep=0}, "g", curve={height=-12pt}, from=1-7, to=3-5]
	\arrow["h", from=1-5, to=1-7]
	\arrow[""{name=4, anchor=center, inner sep=0}, "f"', from=1-5, to=3-5]
	\arrow["{\xi_f}"{xshift=0.125cm, yshift=0.05cm}, shorten <=6pt, shorten >=6pt, Rightarrow, from=0, to=1]
	\arrow["{\bar{\alpha}}"{xshift=-0.05cm, yshift=0.03cm}, shorten <=4pt, shorten >=4pt, Rightarrow, from=1, to=2]
	\arrow["\alpha"{xshift=-0.1cm,yshift=0.025cm}, shorten <=10pt, shorten >=10pt, Rightarrow, from=4, to=3]
\end{tikzcd}\]
Of course, such a $1$-cell $f/h$ is defined up to isomorphism.

A 1-cell $p\colon X\to X'$ \textbf{preserves the left Kan extension} $(f/h,\xi_f)$ if the pair $(p(f/h), p\hcomp \xi_f)$ forms a  left Kan extension of $pf$ along $h$, i.e. there is an invertible 2-cell $(pf)/h\cong p\circ f/h$ satisfying the following equation. 
\[\begin{tikzcd}
	A && {A'} && A && {A'} \\
	X &&& {=} & X & {} \\
	{X'} &&&& {X'} \\
	\arrow["h", from=1-1, to=1-3]
	\arrow[""{name=0, anchor=center, inner sep=0}, "{(pf)/h}"'{pos=0.3, xshift=0.2cm}, curve={height=12pt}, from=1-3, to=3-1]
	\arrow[""{name=1, anchor=center, inner sep=0}, "{p\circ f/h}", curve={height=-12pt}, from=1-3, to=3-1]
	\arrow["h", from=1-5, to=1-7]
	\arrow[""{name=2, anchor=center, inner sep=0}, "f"', from=1-5, to=2-5]
	\arrow["p"', from=2-5, to=3-5]
	\arrow[""{name=3, anchor=center, inner sep=0}, "{f/h}", curve={height=-12pt}, from=1-7, to=2-5]
	\arrow["f"', from=1-1, to=2-1]
	\arrow["p"', from=2-1, to=3-1]
	\arrow["{\xi_f}"{xshift=-0.2cm,yshift=0.08cm}, shorten <=11pt, shorten >=11pt, Rightarrow, from=2, to=3]
	\arrow["{\xi_{pf}}"{xshift=0.2cm,yshift=0.08cm}, shorten <=2pt, shorten >=4pt, Rightarrow, from=2-1, to=0]
	\arrow["\cong", shorten <=4pt, shorten >=4pt, Rightarrow, from=0, to=1]
\end{tikzcd}\]

Throughout the paper we will make use of the notion of left Kan injectivity given below. We also present the notion of weakly left Kan injectivity, which will be discussed in this section.

\begin{definition} \label{definizionegenerale} 
\begin{enumerate} 
    \item An object $X\in\catk$ is \textbf{weakly left Kan injective} with respect to a family of $1$-cells $\mathcal{H}$ if, for all $h\colon A\to A'$ in $\maps$ and any $f\colon A\to X$ in $\catk$ the left Kan extension $(f/h,\xi_f)$ of $f$ along $h$ exists, see \eqref{eq:Kan-ext}.

By the general theory of Kan extensions, this amounts to say that the representable functor $\catk(-,X)\colon\catk\to \Cat$ maps every 1-cell of $\maps$ to a right adjoint 1-cell.

    \item We say that $X \in \mathcal{K}$ is \textbf{left Kan injective} with respect to $\maps$ if it is weakly left Kan injective and, moreover, the 2-cells $\xi_f$ are invertible. This amounts to say that the representable functor $\catk(-,X)$ maps every 1-cell of $\maps$ to a \textit{rali} in $\Cat$ (i.e. a right adjoint with invertible unit).\footnote{\textit{rali} stands for \textit{right adjoint left inverse}; analogously, \textit{lari} stands for \textit{left adjoint right inverse}, i.e. a left adjoint with invertible unit. Similarly, we write lali and rari for left/right adjoints with invertible counit.}

    \item A 1-cell  $p\colon X\to X'$ of $\catk$ is \textbf{(weakly) left Kan injective} with respect to  $\maps$ if its domain and codomain are so and $p$ preserves left Kan extensions along 1-cells in $\maps$.

\item  We can form a locally full sub-2-category
$\linj(\maps)$
 of $\catk$ with objects all weakly left Kan injectives with respect to $\maps$ and 1-cells between them which preserve left Kan extensions along maps in $\maps$. Similarly, we define $$\slinj(\maps)$$
restricting objects to left Kan injectives with respect to $\maps$.
\end{enumerate}
\end{definition}

Bunge and Funk \cite{BungeFunk1999} studied certain KZ-doctrines, called admissible, and characterised their algebras in terms of weakly left Kan injectivity, considering pointwise left Kan extensions (see also \cite{Street1981}). As we will see in the next section, we may characterise the algebras of any KZ-doctrine in terms of left Kan injectivity, and this fact is an important tool in our paper.

\begin{remark}\label{rem:Beck-Chev}Consider the diagram below, where $X$ and $X'$ are left Kan injective with respect to $h\colon A\to A'$, and $\overline{h_X}:=(-)/h$ is the left adjoint of $\catk(h,X)$. A 1-cell $p\colon X\to X'$ preserves left Kan extensions along $h$ if and only if it satisfies an appropriate
  Beck--Chevalley condition, namely,  the canonical natural transformation below is invertible.
  $$\xymatrix{\catk(A',X)\ar[d]_{\catk(A',p)}^{\hspace{16mm} \Longleftarrow }& &\catk(A,X)\ar[ll]_{\overline{h_X}}\ar[d]^{\catk(A,p)}&X\ar[d]^p\\
\catk(A',X')&&\catk(A,X')\ar[ll]^{\overline{h_{X'}}}&X'}$$
\end{remark}

This characterization concerning Kan injectivity leads to a nice behaviour of Kan injective sub-2-categories with respect to bilimits and pseudolimits, which we describe in the following proposition.

\begin{proposition}\label{prop:closed_bilimits}The inclusion 2-functor $\LInj(\ch)\hookrightarrow\catk$ creates bilimits and pseudolimits.
\end{proposition}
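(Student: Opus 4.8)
The plan is to exploit the representable characterization of Kan injectivity together with the fact that bilimits are computed representably in $\Cat$. Let $D\colon\mathcal{I}\to\LInj(\mathcal{H})$ be a diagram and suppose its composite with the inclusion admits a bilimit $L$ in $\catk$, with projections $\pi_i\colon L\to D_i$. The defining property of the bilimit gives, for every object $Y$ of $\catk$, an equivalence of categories $\catk(Y,L)\simeq\mathrm{bilim}_i\,\catk(Y,D_i)$, pseudonatural in $Y$, where the bilimit on the right is formed pointwise in $\Cat$. I would first show that $L$ lies in $\LInj(\mathcal{H})$, then that the projections are $1$-cells of $\LInj(\mathcal{H})$, and finally that $(L,\pi)$ is a bilimit there; the pseudolimit case is identical, replacing ``bilimit'' by ``pseudolimit'' throughout.

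First I would fix $h\colon A\to A'$ in $\mathcal{H}$ and evaluate the equivalence at $Y=A$ and $Y=A'$, using pseudonaturality along $h$ to obtain a square, commuting up to coherent isomorphism, whose two vertical legs are $\catk(h,L)$ and $\mathrm{bilim}_i\,\catk(h,D_i)$. Since each $D_i$ is (weakly) left Kan injective, each $\catk(h,D_i)$ is a right adjoint with left adjoint $\overline{h_{D_i}}=(-)/h$. Because each transition $1$-cell $D_\alpha$ preserves left Kan extensions along $h$, the Beck--Chevalley condition of Remark~\ref{rem:Beck-Chev} holds, which is exactly the statement that the family $(\overline{h_{D_i}})_i$ is pseudonatural; a bilimit of a pseudonatural family of right adjoints whose transition squares satisfy Beck--Chevalley is again a right adjoint, with left adjoint the bilimit of the left adjoints. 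Transporting along the equivalence shows that $\catk(h,L)$ is a right adjoint, so $L$ is weakly left Kan injective. For the strong version, each unit at $D_i$ is invertible; as the adjunction on the bilimit has unit computed objectwise from those at the $D_i$, its unit is invertible too, so $L\in\LInj(\mathcal{H})$.

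Next I would check the projections. By construction the $i$-th component of $\overline{h_L}$ is $\overline{h_{D_i}}$ followed by the $i$-th projection, so the square of Remark~\ref{rem:Beck-Chev} associated to $\pi_i$ commutes up to the coherent isomorphism witnessing that $\overline{h_L}$ is the bilimit of the $\overline{h_{D_j}}$; hence $\pi_i$ preserves left Kan extensions along every $h\in\mathcal{H}$ and is a $1$-cell of $\LInj(\mathcal{H})$. Finally, for the universal property I would use that $\LInj(\mathcal{H})$ is locally full, so $\LInj(\mathcal{H})(Y,L)$ is the full subcategory of $\catk(Y,L)$ on the Kan-extension-preserving $1$-cells. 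The key point is that a $1$-cell $g\colon Y\to L$ preserves left Kan extensions along $h$ if and only if each $\pi_i\hcomp g$ does: the Beck--Chevalley $2$-cell for $g$ is sent by the projections to those for the $\pi_i\hcomp g$, and since $\catk(A',L)\simeq\mathrm{bilim}_i\,\catk(A',D_i)$ the projections jointly reflect invertibility of that comparison $2$-cell. Thus the equivalence above restricts to an equivalence $\LInj(\mathcal{H})(Y,L)\simeq\mathrm{bilim}_i\,\LInj(\mathcal{H})(Y,D_i)$, which is precisely the bilimit universal property in $\LInj(\mathcal{H})$.

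I expect the main obstacle to be the $2$-dimensional coherence underlying the phrase ``the pointwise left adjoints assemble into a left adjoint on the bilimit'': one must verify that the Beck--Chevalley isomorphisms supplied by Remark~\ref{rem:Beck-Chev} are coherent enough to make $(\overline{h_{D_i}})_i$ a genuine pseudonatural transformation, so that it descends to the bilimit, and one must track that objectwise-invertible units yield an invertible unit on $L$ in the strong case. The joint-conservativity used in the last step --- that the projections detect preservation of Kan extensions --- is the other point requiring care, but it follows cleanly from the representable description of the bilimit.
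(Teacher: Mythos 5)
Your proposal is correct and follows essentially the same route as the paper's proof: both pass to hom-categories, use the Beck--Chevalley isomorphisms of Remark~\ref{rem:Beck-Chev} to assemble the pointwise left adjoints $(-)/h$ into a pseudonatural transformation, obtain an adjunction in $[I,\Cat]$ (whence a rali structure, transported along the equivalence, resp.\ isomorphism, defining the bilimit, resp.\ pseudolimit), and then check that the projections preserve the Kan extensions. The only differences are cosmetic or additive: you should state the argument for an arbitrary weight $W\colon I\to\Cat$ (replacing your conical $\mathrm{bilim}_i\,\catk(Y,D_i)$ by $[I,\Cat](W,\catk(Y,D-))$, which changes nothing), and your final step --- verifying the universal property inside $\LInj(\mathcal{H})$ via joint conservativity of the projections --- is a genuine completion of the ``creates'' claim that the paper's proof leaves implicit.
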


\begin{proof}
Let us consider a pseudofunctor $D\colon I\to \slinj(\maps)$ (with $I$ a small 2-category) and a weight $W\colon I\to\Cat$ (strict 2-functor).
\begin{enumerate}
\item For any object $i\in I$, $Di\in\slinj(\maps)$, i.e. $\catk(h,Di)=:{h_{Di}}^\ast$ is a rali (let us denote with $\overline{h_{Di}}\dashv {h_{Di}}^\ast$ the adjunction).

\item For any 1-cell $u\colon i\to j\in I$, $Du$ is Kan injective, i.e.
\[\begin{tikzcd}[ampersand replacement=\&]
	{\catk(A,D_i)} \& {\catk(B,D_i)} \\
	{\catk(A,D_j)} \& {\catk(B,D_j)}
	\arrow["{\overline{h_{Di}}}", from=1-1, to=1-2]
	\arrow[""{name=0, anchor=center, inner sep=0}, "{Du\circ-}"', from=1-1, to=2-1]
	\arrow[""{name=1, anchor=center, inner sep=0}, "{Du\circ-}", from=1-2, to=2-2]
	\arrow["{\overline{h_{Dj}}}"', from=2-1, to=2-2]
	\arrow["\cong"{description}, draw=none, from=0, to=1]
\end{tikzcd}\] 
Let us note that these isomorphisms make $\overline{h_{D-}}$ into a pseudonatural transformation, the composition and the other axioms follow by the universal property of Kan extensions.
\end{enumerate}

It is easy to check that also ${h_{D-}}^\ast$ is a pseudonatural transformation. In particular we have $\overline{h_{D-}}\dashv{h_{D-}}^\ast$ in the 2-category $[I,\Cat]$ of pseudofunctors, pseudonatural transformations and modifications (which also makes ${h_{D-}}^\ast$ a rali). It is well-known that hom-functors into $\Cat$ preserve adjunctions (see \cite[Proposition~I,6.3]{Gray:Adj}). Then, setting $H:=\overline{h_{D-}}\circ-$ and $H^\ast:={h_{D-}}^\ast\circ-$, we get an adjunction
\begin{center}
\begin{tikzcd}[ampersand replacement=\&]
	{[I,\Cat](W,\catk(A,D-))} \& {[I,\Cat](W,\catk(B,D-))}
	\arrow[""{name=0, anchor=center, inner sep=0}, "H", curve={height=-12pt}, from=1-1, to=1-2]
	\arrow[""{name=1, anchor=center, inner sep=0}, "{H^\ast}", curve={height=-12pt}, from=1-2, to=1-1]
	\arrow["\perp"{description}, draw=none, from=1, to=0]
\end{tikzcd}
in $\Cat$.
\end{center}
Now, let us assume that the $W$-weighted bi/pseudolimit of $D$ exists in $\catk$, i.e.
\begin{enumerate}
\item \textbf{Pseudolimit:} There exists an object $L_p\in\catk$ such that
$$\catk(A,L_p)\cong[I,\Cat](W,\catk(A,D-))$$
is an \textbf{isomorphism} of categories for any $A$.

\item \textbf{Bilimit:} There exists an object $L_b\in\catk$ such that
$$\catk(A,L_b)\simeq[I,\Cat](W,\catk(A,D-))$$
is an \textbf{equivalence} of categories for any $A$.
\end{enumerate}

Since both isomorphisms and equivalences of categories preserve adjunctions, in both cases we get a lali (for $L=L_p$ or $L=L_b$)
\[\begin{tikzcd}[ampersand replacement=\&]
	{\catk(A,L)} \& {\catk(B,L)}
	\arrow[""{name=0, anchor=center, inner sep=0}, "{\overline{h}}", curve={height=-12pt}, from=1-1, to=1-2]
	\arrow[""{name=1, anchor=center, inner sep=0}, "{-\circ h}", curve={height=-12pt}, from=1-2, to=1-1]
	\arrow["\perp"{description}, draw=none, from=1, to=0]
\end{tikzcd}\]

Let us consider projections $l^i_w\colon L\to Di$, i.e. the image of the object $w\in Wi$ under the $i$-component of the universal pseudonatural transformation $Wi\to\catk(L,Di)$ (the one corresponding to $1_L$). Let us consider the diagram below
\[\begin{tikzcd}[ampersand replacement=\&]
	{\catk(A,L)} \&\&\& {\catk(B,L)} \\
	\& {[W,\catk(A,D-)]} \& {[W,\catk(B,D-)]} \\
	{\catk(A,Di)} \&\&\& {\catk(B,Di)}
	\arrow[""{name=0, anchor=center, inner sep=0}, "{\overline{h_L}}", from=1-1, to=1-4]
	\arrow[""{name=1, anchor=center, inner sep=0}, "{l^i_w\circ-}"', from=1-1, to=3-1]
	\arrow[""{name=2, anchor=center, inner sep=0}, "{l^i_w\circ-}", from=1-4, to=3-4]
	\arrow["\sim"{description}, no head, from=1-1, to=2-2]
	\arrow["\sim"{description}, no head, from=2-3, to=1-4]
	\arrow[""{name=3, anchor=center, inner sep=0}, "{\overline{h_{Di}}}"', from=3-1, to=3-4]
	\arrow["{e_{i,w}}"{description}, from=2-3, to=3-4]
	\arrow["{e_{i,w}}"{description}, from=2-2, to=3-1]
	\arrow[""{name=4, anchor=center, inner sep=0}, "H", from=2-2, to=2-3]
	\arrow["\cong"{description}, draw=none, from=1, to=2-2]
	\arrow["\cong"{description}, draw=none, from=2-3, to=2]
	\arrow[""{name=5, anchor=center, inner sep=0}, "{(2)}"{description}, draw=none, from=4, to=3]
	\arrow["{(1)}"{description, pos=0.3}, draw=none, from=0, to=5]
\end{tikzcd}\]
where $e_{i,w}$ is the functor taking a pseudonatural transformation $\alpha\colon W\to\catk(A,D-)$ and evaluates its $i$-component at the object $w\in Wi$ (see below)
\[e_{i,w}\colon \alpha \longmapsto \alpha_i(w)\in\catk(A,Di)\]
and we wrote $[W,\catk(A,D-)]$ for the category $[I,\Cat](W,\catk(A,D-))$. Let us recall that $H$ sends a pseudonatural transformation $\alpha\colon W\to\catk(A,D-)$ to
\[\begin{tikzcd}[ampersand replacement=\&]
	W \& {\catk(A,D-)} \& {\catk(B,D-).}
	\arrow["\overline{{h_{D-}}}", from=1-2, to=1-3]
	\arrow["\alpha", from=1-1, to=1-2]
\end{tikzcd}\]
Hence, it is straightforward to check that the diagram (2) commutes and so the whole diagram above (since diagram (1) commutes by definition of $\overline{h_L}$). This shows that each $l^i_w$ is left Kan injective.
\end{proof}

All of this reasoning works also when we have $-\circ h$ only a left adjoint and not a  lali, hence also $\linj(\maps)$ is closed under weighted bi/pseudolimits. This also follows from the fact shown below that every weakly left Kan injective sub-2-category is left Kan injective (Proposition \ref{fromratorali}).

Next we list some examples concerning Kan injective sub-2-categories.

\begin{example}[Categories with finite colimits are weakly Kan injective] \label{rexinj}
Let $\textbf{Rex}$ be the $2$-category of small categories with finite colimits and functors preserving them. Define, in $\Cat$, \[\maps = \{\top\colon D \to 1 \mid D \text{ is a finite category}\}.\]
It then follows from \cite[X.7.1]{mac2013categories} that $\textbf{Rex} \simeq \linj(\maps)$. Of course, because finite colimits are generated by finite coproducts and coequalizers, $\maps$ can be reduced to three arrows $D \to 1$, with

\centerline{$D=0,\,
\begin{array}{|c|}
\hline
a \bullet \; \bullet b\\
\hline\end{array},\,
 \begin{array}{|c|}
\hline
\bullet {\renewcommand{\arraystretch}{0.65}\begin{array}{c} \rightarrow\\\rightarrow\end{array}}\bullet\\
\hline\end{array}$,}
 \noindent determining the existence of
an initial object, binary coproducts and coequalizers, respectively. For any class $\mathcal{D}$ of finite categories, using a similar argument, we can get the category of categories with any colimit of shape in $\mathcal{D}$. 
\end{example}

\begin{example}[Categories with finite colimits are Kan injective] \label{rexsinj}
Similarly to the previous discussion, we will describe $\textbf{Rex}$ as a  left Kan injective sub-2-category of $\Cat$. In order to do so, for all finite categories $D$, call $\hat{D}$ the category obtained from $D$ by freely adding a terminal object
 and call $\iota_D \colon D \to \hat{D}$ the canonical inclusion. Define,
\[\maps = \{\iota_D \colon D \to \hat{D} \mid D \text{ is a finite category}\}.\]

It then follows from \cite[3.1.8, 6.3.10]{riehl2017category} that $\textbf{Rex} \simeq \slinj(\maps)$. Naturally, as in the above example,  $\maps$ can be reduced to a class containing only three arrows.
\end{example}

{\blue
\begin{remark}[Pointwise Kan extension]
In the spirit of the previous example, a significant variation of this problem has been studied in the literature, for the special case of $2$-categories of prestacks  and \textit{pointwise} Kan extensions. In \cite[Theorem~9.3, Corollary~9.5]{Street1981}, Street provides a result in ${\cal K}=[{\cal C}^{\mbox{\scriptsize op}}, \Cat]$, for $\cal C$ a small bicategory, that is similar to our Theorem \ref{thm:linj-kz-adj}.
\end{remark}
}

\begin{example}[Orthogonality] In the context of ordinary categories, that is, locally discrete 2-categories, the notion of Kan-injectivity is just the classical definition of orthogonality. In this case, $\LInj(\ch)$ is the full subcategory of all objects orthogonal to $\ch$ usually denoted by $\ch^{\perp}$.
\end{example}

{\blue
\begin{example}[Other cameos: Factorization systems] 
In the general theory of algebraic weak factorization systems developed by Bourke and Garner, a variation of Kan injectivity emerges in a stricter form. Indeed, in \cite[Example~29~(iii)]{bourke2016algebraic} they essentially present our example \ref{rexinj}, but where there is a specified data of colimits. On a similar note, \cite[Section~9]{clementino2016lax} discusses strict extensions in the context of factorization systems.
\end{example}
}

\begin{example}[Fullness]  If $\ch$ is made of lax epimorphisms (i.e. for every $h\colon A\to A'$ in $\maps$ and every $X$, the functor $\catk(h,X)\colon\catk(A',X)\to \catk(A,X)$ is fully faithful), then $\LInj(\ch)$ is a full sub-2-category. Indeed, for every map $p$ between Kan injective objects, from the fact that $((pf)/h)h\cong pf \cong p(f/h)h$, it will follow that $(pf)/h\cong p(f/h)$. A detailed study on lax epimorphisms may be seen in \cite{LucatelliSousa2022}.
\end{example}

\begin{example}[Order-enriched categories] Known examples abound in the 2-category of posets and other order-enriched categories. For instance, in the category $\mathbf{Top}_0$ of $T_0$ topological spaces and continuous maps, the category of continuous lattices and maps preserving directed suprema and infima is $\mathbf{RInj}(\maps)$ for $\maps$ the class of (topological) embeddings, where $\mathbf{RInj}$ referes to right Kan injectivity in the expected sense. In the category $\mathbf{Loc}$ of locales and localic maps, the category of stably locally compact locales with convenient maps is $\LInj(\maps)$ for $\maps$ the class of flat embeddings (see \cite{elephant1}). These and other examples may be encountered in \cite{adamek2015kan} and \cite{carv-sousa_locales}.
\end{example}

\begin{remark}[A comparison with enriched weakness]
  In \cite{lack2012enriched}, Lack and Rosický introduce a very interesting notion of injectivity, which is parametric with respect to a class of maps. Let us recall it and briefly to compare it with our notion. Let $\mathcal{V}$ be a reasonably nice category to enrich in and let $\mathcal{E}$ be a class of maps in $\mathcal{V}$. Let $\mathcal{K}$ be a category enriched over $\mathcal{V}$ and $\mathcal{H}$ be a class of $1$-cells; then they define \[ \textbf{Inj}_{\mathcal{E}}(\mathcal{H})\] to be the full subcategory of $\mathcal{K}$ of those objects $X$ such that $\mathcal{K}(-,X)$ maps $\mathcal{H}$ to $\mathcal{E}$. This definition  resonates with ours. Indeed, let us consider the particular choice $\mathcal{V}=\Cat$ and $\mathcal{E}=\mathsf{ra},\mathsf{rali}$, where $\mathsf{ra}$ and $\mathsf{rali}$ stand for the classes of right adjoints  and of right adjoint left inverses, respectively.

It is clear that on the level of objects, $\textbf{Inj}_{\mathsf{ra}}(\mathcal{H})$ and $\textbf{Inj}_{\mathsf{rali}}(\mathcal{H})$ coincide with our notions of Kan injectives. Yet, there is a huge difference on the choice of the $1$-cells, which, in our case, leads to a, in general, non-full sub-2-category.
\end{remark}


\subsection{A comparison between weak Kan-injectivity and Kan-injectivity} \label{comparison}

The following proposition allows us to restrict to left Kan injectivity without losing generality.

\begin{proposition} \label{fromratorali}
Let $\maps$ be a class of maps in a 2-category $\catk$ with bicocomma objects, then there exists a class of maps $\overline{\maps}$ such that $\linj(\maps)=\slinj(\overline{\maps})$.
\end{proposition}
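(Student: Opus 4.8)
The plan is to realise weak left Kan injectivity with respect to a single map as \emph{strong} left Kan injectivity with respect to an auxiliary map whose left Kan extensions automatically have invertible units. Fix $h\colon A\to A'$ in $\maps$. Using that $\catk$ has bicocomma objects, form the \emph{mapping cocylinder} $M_h$ of $h$, namely the bicocomma object of the span $A\xleftarrow{\mathrm{id}_A}A\xrightarrow{\,h\,}A'$, with coprojections $i_h\colon A\to M_h$, $j\colon A'\to M_h$ and structure $2$-cell $\theta\colon i_h\Rightarrow j\hcomp h$. By the (bicategorical) universal property, for every $X$ there is an equivalence
\[
\catk(M_h,X)\;\simeq\;\mathcal E_h(X),
\]
where $\mathcal E_h(X)$ is the category of ``extension data'' whose objects are triples $(z\colon A\to X,\ y\colon A'\to X,\ \mu\colon z\Rightarrow y\hcomp h)$ and whose morphisms are the evident compatible pairs of $2$-cells; under this equivalence $\catk(i_h,X)$ corresponds to the forgetful functor $(z,y,\mu)\mapsto z$. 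I define $\overline{\maps}:=\{\,i_h\colon A\to M_h \mid h\in\maps\,\}$.

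The key observation is that left Kan extensions along $i_h$, when they exist, always have invertible unit. Indeed, the left Kan extension of $f\colon A\to X$ along $i_h$ is computed as the initial object of the comma category $f\downarrow\catk(i_h,X)$; transporting along the equivalence above, this initial object is the triple $(f,\ f/h,\ \xi_f)$, whose $z$-component is $f$ itself with identity comparison. Hence the unit $f\Rightarrow\catk(i_h,X)(f/i_h)$ is an identity under this representative, in particular invertible. Consequently, for every $X$,
\[
X\in\linj(\{h\})\iff \catk(i_h,X)\ \text{has a left adjoint}\iff X\in\slinj(\{i_h\}),
\]
the first equivalence because the forgetful functor $\mathcal E_h(X)\to\catk(A,X)$ admits a left adjoint exactly when each $f/h$ exists (i.e.\ exactly when $X$ is weakly left Kan injective with respect to $h$), and the second because weak and strong injectivity with respect to $i_h$ coincide by the previous sentence. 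Taking the conjunction over all $h\in\maps$ shows that $\linj(\maps)$ and $\slinj(\overline{\maps})$ have the same objects.

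It remains to match the $1$-cells, and the same bookkeeping does the work. A $1$-cell $p\colon X\to X'$ between such objects preserves the Kan extensions $f/h$ for all $h\in\maps$ iff the canonical comparison $(pf)/h\Rightarrow p\hcomp(f/h)$ is invertible for all $f$. Since the left Kan extension of $f$ along $i_h$ is, under the equivalence, the triple $(f,f/h,\xi_f)$ and postcomposition $p\hcomp(-)$ acts componentwise, the comparison $(pf)/i_h\Rightarrow p\hcomp(f/i_h)$ is invertible precisely when $(pf)/h\Rightarrow p\hcomp(f/h)$ is. Thus $p$ preserves Kan extensions along $i_h$ iff it preserves them along $h$, so the two sub-$2$-categories have the same $1$-cells; being locally full in $\catk$, they agree on $2$-cells as well, giving $\linj(\maps)=\slinj(\overline{\maps})$.

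I expect the main obstacle to be the careful transfer of the relevant universal properties across the \emph{bicategorical} equivalence $\catk(M_h,X)\simeq\mathcal E_h(X)$: one must check that ``having a left adjoint'' and ``having invertible unit'' transport correctly (they do, since equivalences preserve and reflect adjunctions and invertibility of $2$-cells), and that the comparison $2$-cells witnessing preservation correspond on the two sides. The conceptual crux, however, is the identity-unit observation, which is exactly what upgrades weak injectivity along $h$ to strong injectivity along $i_h$.
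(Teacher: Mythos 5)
Your $\overline{\maps}$ is exactly the paper's: the maps $i_h\colon A\to M_h$ into the bicocomma object of $A\xleftarrow{1_A}A\xrightarrow{h}A'$, and your translation of the problem through the equivalence $\catk(M_h,X)\simeq\mathcal{E}_h(X)$ (under which $\catk(i_h,X)$ becomes the forgetful functor $U\colon(z,y,\mu)\mapsto z$) is a correct repackaging of the universal property that the paper instead exploits by direct pasting arguments. The trouble is that the step you yourself call the conceptual crux --- ``left Kan extensions along $i_h$, when they exist, always have invertible unit'' --- is asserted rather than proved: you justify it by saying that the initial object of $f\downarrow U$ \emph{is} the triple $(f,f/h,\xi_f)$ with identity comparison, but to identify the initial object in that form you must already know (i) that $f/h$ exists and (ii) that the comparison $2$-cell $\alpha_0$ of the initial object is invertible, and (ii) is precisely the claim being established. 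The circularity is not cosmetic, because your chain ``$X\in\linj(\{h\})$ iff $\catk(i_h,X)$ has a left adjoint iff $X\in\slinj(\{i_h\})$'' routes both inclusions through this unproved observation: in particular the implication from ``$\catk(i_h,X)$ has a left adjoint'' to ``$f/h$ exists'' genuinely needs it.

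The observation is true and the gap is fixable in two ways. (a) Prove invertibility directly: if $((z_0,y_0,\mu_0),\alpha_0)$ is initial in $f\downarrow U$, the unique morphism $(\sigma,\tau)$ to the object $((f,\,y_0,\,\mu_0\cdot\alpha_0),\,1_f)$ gives $\sigma\colon z_0\Rightarrow f$ with $\sigma\cdot\alpha_0=1_f$; then $(\alpha_0\cdot\sigma,\tau)$ is an endomorphism of the initial object, hence the identity, so $\alpha_0\cdot\sigma=1_{z_0}$ and $\alpha_0$ is invertible; from this one also reads off that $y_0$, equipped with $\mu_0\cdot\alpha_0$, is a left Kan extension of $f$ along $h$. (b) Avoid the observation altogether by proving the two inclusions separately, which is what the paper does: for $\linj(\maps)\subseteq\slinj(\overline{\maps})$ check directly that $((f,f/h,\xi_f),1_f)$ is initial in $f\downarrow U$, using only the universal property of $f/h$; for $\slinj(\overline{\maps})\subseteq\linj(\maps)$ invertibility of the unit of $f/i_h$ is a \emph{hypothesis}, and the same comma-category computation then shows that $f/i_h\hcomp j$, with $2$-cell the pasting of the bicocomma $2$-cell with $\xi^{i_h}_f$, is a left Kan extension of $f$ along $h$. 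With either repair, the rest of your argument --- including the componentwise treatment of $1$-cells, where the $z$-component of the comparison $(pf)/i_h\Rightarrow p\hcomp(f/i_h)$ is invertible precisely because the units $\xi^{i_h}$ are --- goes through and recovers the paper's proof.
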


\noindent \textbf{The mapping cone trick.}
Concerning examples \ref{rexinj} and \ref{rexsinj}, we can guess a construction of $\overline{\maps}$ from $\maps$.
Indeed, in that case, from each arrow $\top: D \to 1$ one can obtain  the {\em mapping cone}  $\iota_D\colon D \to \hat{D}$  via the (bi-)cocomma object below.

\[\begin{tikzcd}
	D & 1 \\
	D & {\hat{D}}
	\arrow["\top", from=1-1, to=1-2]
	\arrow[""{name=0, anchor=center, inner sep=0}, "j", dashed, from=1-2, to=2-2]
	\arrow["{\iota_D}"', dashed, from=2-1, to=2-2]
	\arrow[""{name=1, anchor=center, inner sep=0}, Rightarrow, no head, from=1-1, to=2-1]
	\arrow["\rho", shorten <=10pt, shorten >=10pt, Rightarrow, from=1, to=0]
\end{tikzcd}\]
 We show in the proof of Proposition \ref{fromratorali} that this is an instance of a general property. A very similar idea and result appears in  \cite[Section~2]{street2014pointwise}.  

\begin{proof} [of Proposition \ref{fromratorali}]
For every map $h \in \maps$, we construct the mapping cone $C(h)$ over $h$ as the bicocomma object below.
\[\begin{tikzcd}
	A & {A'} \\
	A & {C(h)}
	\arrow["h", from=1-1, to=1-2]
	\arrow[""{name=0, anchor=center, inner sep=0}, "j", dashed, from=1-2, to=2-2]
	\arrow["{i_h}"', dashed, from=2-1, to=2-2]
	\arrow[""{name=1, anchor=center, inner sep=0}, Rightarrow, no head, from=1-1, to=2-1]
	\arrow["\rho", shorten <=12pt, shorten >=12pt, Rightarrow, from=1, to=0]
\end{tikzcd}\]
Then, we define  $\overline{\maps}$ to be the class of all $i_h$ with $h \in \maps$. Let us now show that an object $X$ is weakly left Kan injective with respect to $\maps$ if and only if it is left Kan injective with respect to $\overline{\maps}$. In particular, we will show that an object $X$ is weakly left Kan injective to a $h\in\maps$ if and only if it is left Kan injective to $i_h$.

\begin{enumerate}
\item We start by showing that if $X$ is left Kan injective with respect to $i_h$, then it is weakly left Kan injective with respect to $h$.

Let $f\colon A\to X$ be a 1-cell in $\catk$. Since $X$ is left injective with respect to $i_h$, there exists the left Kan extension $f/i_h$ with the associated 2-cell $\xi^{i_h}_f$ an isomorphism. Then, we can set $f/h:=f/i_h\circ j$ and $\xi^h_f$ as the pasting diagram below:
\[\begin{tikzcd}[ampersand replacement=\&]
	A \&\& {A'} \\
	\&\& {C(h)} \\
	X
	\arrow[""{name=0, anchor=center, inner sep=0}, "j", from=1-3, to=2-3]
	\arrow["h", from=1-1, to=1-3]
	\arrow[""{name=1, anchor=center, inner sep=0}, "{i_h}"{description, pos=0.3}, from=1-1, to=2-3]
	\arrow[""{name=2, anchor=center, inner sep=0}, "f"', from=1-1, to=3-1]
	\arrow["{f/i_h}", dashed, from=2-3, to=3-1]
	\arrow["\rho", shorten <=13pt, shorten >=13pt, Rightarrow, from=1, to=0]
	\arrow["{\xi^{i_h}_f}", shorten <=22pt, shorten >=22pt, Rightarrow, dashed, from=2, to=2-3]
\end{tikzcd}\]
Now, we will show that $f/h$ and $\xi^h_f$ defined in this way satisfy the universal property of the left Kan extension. Let $g\colon A'\to X$ be a 1-cell in $\catk$ together with a 2-cell
\begin{center}
\begin{tikzcd}[ampersand replacement=\&]
	A \& {A'} \\
	\& X
	\arrow["h", from=1-1, to=1-2]
	\arrow[""{name=0, anchor=center, inner sep=0}, "g", from=1-2, to=2-2]
	\arrow[""{name=1, anchor=center, inner sep=0}, "f"', from=1-1, to=2-2]
	\arrow["\beta", shorten <=6pt, shorten >=6pt, Rightarrow, from=1, to=0]
\end{tikzcd} \hspace{0.5cm}$=$\hspace{0.5cm}
\begin{tikzcd}[ampersand replacement=\&]
	A \& {A'} \\
	A \& X
	\arrow[""{name=0, anchor=center, inner sep=0}, Rightarrow, no head, from=1-1, to=2-1]
	\arrow["f"', from=2-1, to=2-2]
	\arrow["h", from=1-1, to=1-2]
	\arrow[""{name=1, anchor=center, inner sep=0}, "g", from=1-2, to=2-2]
	\arrow["\beta", shorten <=13pt, shorten >=13pt, Rightarrow, from=0, to=1]
\end{tikzcd}
\end{center}
By the universal property of the bicocomma object, the 2-cell $\beta$ is equivalent to
\begin{center}
     a 1-cell $\overline{g}\colon C(h)\to X$ and invertible 2-cells \\
\begin{tikzcd}
	&&& {A'} \\
	A & {C(h)} && {C(h)} \\
	&& X && X
	\arrow[""{name=0, anchor=center, inner sep=0}, "f"', curve={height=12pt}, from=2-1, to=3-3]
	\arrow["{\overline{g}}"', from=2-4, to=3-5]
	\arrow[""{name=1, anchor=center, inner sep=0}, "{\overline{g}}", from=2-2, to=3-3]
	\arrow["{i_h}", from=2-1, to=2-2]
	\arrow["j"', from=1-4, to=2-4]
	\arrow[""{name=2, anchor=center, inner sep=0}, "g", curve={height=-12pt}, from=1-4, to=3-5]
	\arrow["\cong"{description}, Rightarrow, draw=none, from=0, to=1]
	\arrow["\cong"{description}, Rightarrow, draw=none, from=2-4, to=2]
\end{tikzcd}
\end{center}
whose pasting with $\rho$ gives $\beta$.
Then, using the universal property of the left Kan extension $f/i_h$, we get that these data is equivalent to have
\begin{center}
    a 1-cell $\overline{g}\colon C(h)\to X$ such that $\overline{g}\circ j\cong g$ and \\
    a 2-cell $\overline{\beta}\colon f/i_h\to\overline{g}$ such that \\
\begin{tikzcd}[ampersand replacement=\&]
	A \&\& {C(h)} \\
	\\
	\&\& X
	\arrow[""{name=0, anchor=center, inner sep=0}, "{\overline{g}}", curve={height=-18pt}, from=1-3, to=3-3]
	\arrow[""{name=1, anchor=center, inner sep=0}, "{f/i_h}"', curve={height=18pt}, from=1-3, to=3-3]
	\arrow["{i_h}", from=1-1, to=1-3]
	\arrow["{\overline{\beta}}"{yshift=0.1cm}, shorten <=8pt, shorten >=8pt, Rightarrow, from=1, to=0]
\end{tikzcd}
\hspace{0.5cm}$=$\hspace{0.5cm}
\begin{tikzcd}[ampersand replacement=\&]
	A \&\& {C(h)} \\
	\\
	{C(h)} \&\& X
	\arrow[""{name=0, anchor=center, inner sep=0}, "f"{description}, from=1-1, to=3-3]
	\arrow["{i_h}", from=1-1, to=1-3]
	\arrow["{\overline{g}}", from=1-3, to=3-3]
	\arrow["{i_h}"', from=1-1, to=3-1]
	\arrow["{f/i_h}"', from=3-1, to=3-3]
	\arrow["{{\xi^{i_h}_f}^{-1}}"{xshift=0.1cm}, shorten <=11pt, shorten >=11pt, Rightarrow, from=3-1, to=0]
	\arrow["\cong"{description}, draw=none, from=0, to=1-3]
\end{tikzcd}
\end{center}
Let us notice that this means that $\overline{\beta}i_h$ is completely determined by the universal 2-cell $\xi_f^{i_h}$ and the isomorphism $f\cong\overline{g}\circ i_h$. Then, using the 2-dimensional property of the bicocomma object we get that $\overline{\beta}$ corresponds to the two 2-cells $\overline{\beta}i_h$ and $\overline{\beta}j$. Therefore, the data above corresponds to
\begin{center}
    a 1-cell $\overline{g}\colon C(h)\to X$ with $\overline{g}\circ j\cong g$ and \\
    a 2-cell $\overline{\beta}_j\colon f/i_h\circ j\to\overline{g}\circ j$ such that \\
    \adjustbox{scale=0.9}{
\begin{tikzcd}
	A && {A'} && A && {A'} \\
	\\
	A && {C(h)} & {=} & A && {C(h)} & {C(h)} \\
	\\
	& {C(h)} && X &&&& X
	\arrow[""{name=0, anchor=center, inner sep=0}, "{\overline{g}}", from=3-3, to=5-4]
	\arrow["{i_h}"{description}, from=3-1, to=3-3]
	\arrow[""{name=1, anchor=center, inner sep=0}, "f"{description}, from=3-1, to=5-4]
	\arrow["h", from=1-1, to=1-3]
	\arrow[""{name=2, anchor=center, inner sep=0}, Rightarrow, no head, from=1-1, to=3-1]
	\arrow[""{name=3, anchor=center, inner sep=0}, "j", from=1-3, to=3-3]
	\arrow["{i_h}"', from=3-1, to=5-2]
	\arrow["{f/i_h}"', from=5-2, to=5-4]
	\arrow[""{name=4, anchor=center, inner sep=0}, Rightarrow, no head, from=1-5, to=3-5]
	\arrow["h", from=1-5, to=1-7]
	\arrow[""{name=5, anchor=center, inner sep=0}, "j"{description}, from=1-7, to=3-7]
	\arrow["{i_h}"', from=3-5, to=3-7]
	\arrow["{f/i_h}"', from=3-7, to=5-8]
	\arrow["j", from=1-7, to=3-8]
	\arrow["{\overline{g}}", from=3-8, to=5-8]
	\arrow["{\overline{\beta}_j}", shorten <=5pt, shorten >=4pt, Rightarrow, from=3-7, to=3-8]
	\arrow["{{\xi^{i_h}_f}^{-1}}"{xshift=0.1cm,yshift=-0.2cm}, shorten <=6pt, shorten >=7pt, Rightarrow, from=5-2, to=1]
	\arrow["\cong", Rightarrow, draw=none, from=1, to=0]
	\arrow["\rho"{yshift=0.1cm}, shorten <=40pt, shorten >=40pt, Rightarrow, from=2, to=3]
	\arrow["\rho"{yshift=0.1cm}, shorten <=25pt, shorten >=25pt, Rightarrow, from=4, to=5]
\end{tikzcd} }\\
\end{center}
Putting together all of these steps we get that, given a 1-cell $g\colon A'\to X$ and a 2-cell $\beta\colon f\to g\circ h$, there exists a unique 2-cell $\Tilde{\beta}\colon f/h\to g$ (with $\tilde{\beta}$ the composition of $\bar{\beta}_j$ with the isomorphism $\bar{g}j\cong g$ above) such that
\[\begin{tikzcd}
	A && {A'} &&& A && {A'} \\
	\\
	A &&&& {=} & A && {C(h)} \\
	\\
	&& X &&&&& X.
	\arrow["f"', curve={height=12pt}, from=3-1, to=5-3]
	\arrow["h", from=1-1, to=1-3]
	\arrow[Rightarrow, no head, from=1-1, to=3-1]
	\arrow[""{name=0, anchor=center, inner sep=0}, Rightarrow, no head, from=1-6, to=3-6]
	\arrow["h", from=1-6, to=1-8]
	\arrow[""{name=1, anchor=center, inner sep=0}, "j"{description}, from=1-8, to=3-8]
	\arrow["{i_h}"', from=3-6, to=3-8]
	\arrow[""{name=2, anchor=center, inner sep=0}, "{f/i_h}"{description}, from=3-8, to=5-8]
	\arrow[""{name=3, anchor=center, inner sep=0}, "f"', curve={height=12pt}, from=3-6, to=5-8]
	\arrow[""{name=4, anchor=center, inner sep=0}, "g", curve={height=-40pt}, from=1-3, to=5-3]
	\arrow[""{name=5, anchor=center, inner sep=0}, "g", curve={height=-40pt}, from=1-8, to=5-8]
	\arrow["\rho"{yshift=0.1cm}, shorten <=26pt, shorten >=26pt, Rightarrow, from=0, to=1]
	\arrow["{{\xi^{i_h}_f}}"{xshift=0.2cm,yshift=0.075cm}, shorten <=12pt, shorten >=12pt, Rightarrow, from=3, to=2]
	\arrow["{\tilde{\beta}}"{xshift=-0.175cm}, shorten <=1pt, shorten >=5pt, Rightarrow, from=3-8, to=5]
	\arrow["\beta", shorten <=34pt, shorten >=34pt, Rightarrow, from=3-1, to=4]
\end{tikzcd}\]

\item Now we show that if $X$ is weakly left Kan injective with respect to $h$, then it is left Kan injective with respect to $i_h$.

Let $f\colon A\to X$ be a 1-cell in $\catk$. Since $X$ is weakly left injective with respect to $h$, there exists the left Kan extension $(f/h,\xi^{h}_f)$. Then, by the universal property of the bicocomma object, there exists a unique (up-to-isomorphism) $f/i_h$ such that
\[\begin{tikzcd}
	A && {A'} &&& A && {A'} \\
	\\
	A &&&& {=} & A && {C(h)} \\
	&&& X &&&&& X,
	\arrow["f"', curve={height=12pt}, from=3-1, to=4-4]
	\arrow["h", from=1-1, to=1-3]
	\arrow[""{name=0, anchor=center, inner sep=0}, Rightarrow, no head, from=1-1, to=3-1]
	\arrow[""{name=1, anchor=center, inner sep=0}, Rightarrow, no head, from=1-6, to=3-6]
	\arrow["h", from=1-6, to=1-8]
	\arrow[""{name=2, anchor=center, inner sep=0}, "j"{description}, from=1-8, to=3-8]
	\arrow["{i_h}"{description}, from=3-6, to=3-8]
	\arrow[""{name=3, anchor=center, inner sep=0}, "{\exists!f/i_h}"{description}, from=3-8, to=4-9]
	\arrow[""{name=4, anchor=center, inner sep=0}, "f"', curve={height=12pt}, from=3-6, to=4-9]
	\arrow[""{name=5, anchor=center, inner sep=0}, "{f/h}", curve={height=-18pt}, from=1-3, to=4-4]
	\arrow[""{name=6, anchor=center, inner sep=0}, "{f/h}", curve={height=-18pt}, from=1-8, to=4-9]
	\arrow["\rho"{yshift=0.1cm}, shorten <=26pt, shorten >=26pt, Rightarrow, from=1, to=2]
	\arrow["{\xi_f^{i_h}}"{xshift=0.1cm}, shorten <=12pt, shorten >=13pt, Rightarrow, from=4, to=3]
	\arrow["\cong", Rightarrow, draw=none, from=3-8, to=6]
	\arrow["{\xi^h_f}"{xshift=-0.2cm,yshift=0.1cm}, shorten <=39pt, shorten >=41pt, Rightarrow, from=0, to=5]
\end{tikzcd}\]
where also $\xi_f^{i_h}$ is an isomorphism. Let us prove now that $f/i_h$ and $\xi^{i_h}_f$ have the universal property of a left Kan extension. 

Let $t\colon C(h)\to X$ be a 1-cell. We want to show that to give a 2-cell $\gamma\colon(f/i_h)i_h\Rightarrow ti_h$ is equivalent to give a 2-cell $\overline{\gamma}\colon f/i_h\Rightarrow t$ with $\overline{\gamma}\hcomp i_h=\gamma$. By the universal property of the bicocomma object, to have a 2-cell   $\overline{\gamma}\colon f/i_h\Rightarrow t$ is equivalent to give 2-cells $\gamma_{i_h}(=\gamma)$ and $\gamma_j$
such that
\[\begin{tikzcd}
	A & {A'} &&& A & {A'} \\
	A & {C(h)} && {=} & A & {C(h)} & {C(h)} \\
	& {C(h)} & X &&&& X.
	\arrow["h", from=1-1, to=1-2]
	\arrow[""{name=0, anchor=center, inner sep=0}, Rightarrow, no head, from=1-1, to=2-1]
	\arrow[""{name=1, anchor=center, inner sep=0}, Rightarrow, no head, from=1-5, to=2-5]
	\arrow["h", from=1-5, to=1-6]
	\arrow[""{name=2, anchor=center, inner sep=0}, "j"', from=1-6, to=2-6]
	\arrow["{i_h}"', from=2-5, to=2-6]
	\arrow["{f/i_h}"', from=2-6, to=3-7]
	\arrow[""{name=3, anchor=center, inner sep=0}, "j", from=1-2, to=2-2]
	\arrow["{i_h}", from=2-1, to=2-2]
	\arrow[""{name=4, anchor=center, inner sep=0}, "t", from=2-2, to=3-3]
	\arrow[""{name=5, anchor=center, inner sep=0}, "{i_h}"', from=2-1, to=3-2]
	\arrow["{f/i_h}"', from=3-2, to=3-3]
	\arrow["j", from=1-6, to=2-7]
	\arrow["t", from=2-7, to=3-7]
	\arrow["{\gamma_j}", shorten <=3pt, shorten >=2pt, Rightarrow, from=2-6, to=2-7]
	\arrow["{\gamma_{i_h}}", shorten <=14pt, shorten >=14pt, Rightarrow, from=5, to=4]
	\arrow["\rho", shorten <=13pt, shorten >=13pt, Rightarrow, from=0, to=3]
	\arrow["\rho", shorten <=13pt, shorten >=13pt, Rightarrow, from=1, to=2]
\end{tikzcd}\]
We show that this is equivalent to give a 2-cell $\gamma=\gamma_{i_h}$ with $\overline{\gamma}i_h=\gamma$, by showing that $\gamma_j$ is determined by $\gamma_{i_h}$. This will complete the proof that $X$ is left Kan injective with respect to $i_h$.
Indeed, pasting with $\xi_f^{i_h}$, expanding the identity on $f/i_h\circ j$ through the isomorphism $f/i_h\circ j\cong f/h$,  and using the definition of $f/i_h$, we obtain the following equality
\[\begin{tikzcd}[ampersand replacement=\&]
	A \& {A'} \&\& A \& {A'} \\
	A \& {C(h)} \& {=} \& A \&\& {C(h)} \& {C(h)} \\
	\& {C(h)} \&\&\&\&\& {} \\
	\&\& X \&\& X
	\arrow["h", from=1-1, to=1-2]
	\arrow[""{name=0, anchor=center, inner sep=0}, Rightarrow, no head, from=1-1, to=2-1]
	\arrow[Rightarrow, no head, from=1-4, to=2-4]
	\arrow["h", from=1-4, to=1-5]
	\arrow[""{name=1, anchor=center, inner sep=0}, "j", from=1-2, to=2-2]
	\arrow["{i_h}", from=2-1, to=2-2]
	\arrow[""{name=2, anchor=center, inner sep=0}, "t", curve={height=-18pt}, from=2-2, to=4-3]
	\arrow["{i_h}"{description}, from=2-1, to=3-2]
	\arrow["{f/i_h}"{description}, from=3-2, to=4-3]
	\arrow["j", curve={height=-6pt}, from=1-5, to=2-7]
	\arrow["t", curve={height=-12pt}, from=2-7, to=4-5]
	\arrow["j"{description}, from=1-5, to=2-6]
	\arrow["{f/i_h}"{description}, from=2-6, to=4-5]
	\arrow[""{name=3, anchor=center, inner sep=0}, "{f/h}"{description}, from=1-5, to=4-5]
	\arrow["f"', curve={height=6pt}, from=2-4, to=4-5]
	\arrow[""{name=4, anchor=center, inner sep=0}, "f"', curve={height=40pt}, from=2-1, to=4-3]
	\arrow["{\gamma_j}"'{pos=0.3}, shorten <=5pt, shorten >=30pt, Rightarrow, from=2-6, to=3-7]
	\arrow["\rho", shorten <=15pt, shorten >=15pt, Rightarrow, from=0, to=1]
	\arrow["\cong"{description}, draw=none, from=3, to=2-6]
	\arrow["{\gamma_{i_h}}", shorten <=5pt, shorten >=9pt, Rightarrow, from=3-2, to=2]
	\arrow["{\xi^{i_h}_f}"{pos=0.6}, shorten <=4pt, shorten >=0pt, Rightarrow, from=4, to=3-2]
	\arrow["{\xi^h_f}"{pos=0.3}, shorten <=8pt, shorten >=13pt, Rightarrow, from=2-4, to=3]
\end{tikzcd}\]
showing that $\gamma_j$ is determined by $\gamma=\gamma_{i_h}$ via the universality of the left Kan extension of $f$ along $h$.
\end{enumerate}

Finally, using the description of the Kan extensions given above, we can see the equality for 1-cells as well.

 Let $p\colon X\to Y$ be left Kan injective with respect to $\overline{\maps}$. Then, for any $h\colon A\to A'\in\maps$ and $f\colon A\to X\in\catk$,
    \begin{center}
    \begin{tabular}{rlr}
         $p\circ f/h$
         & $\cong p\circ f/i_h\circ j$
         & (by construction above)\\
         &  $\cong (pf)/i_h\circ j$
         & (because $p\in\slinj(\overline{\maps})$)\\
         & $\cong (pf)/h$
         & (by construction above).\\
    \end{tabular}
    \end{center}
     On the other hand, let us consider $p\colon X\to Y\in\linj(\maps)$. For any $i_h\in\overline{\maps}$ and any $f\colon A\to X\in\catk$, through the universal property of the co-comma object $C(h)$,
    \begin{center}
      $p\circ f/i_h$ corresponds to $p\circ f/h$ \\
      and $(pf)/i_h$ to $(pf)/h$.
    \end{center}
    Since $p\in\linj(\maps)$, we get $p\circ f/h\cong(pf)/h$ and therefore $p\circ f/i_h\cong (pf)/i_h$.
\end{proof}

\subsection{Saturated classes}\label{subsec:saturation}
Kan injectivity determines a Galois connection between locally full sub-2-categories and classes of 1-cells. More precisely, given a locally full sub-2-category $\mathcal{A}$, denote by $\cata^{\LInj}$ the class of all 1-cells with respect to which all objects and 1-cells of $\cata$ are left Kan injective. Then,  we have that $\mathcal{A}\subseteq \mathcal{B}$ implies  $\mathcal{B}^{\slinj}\subseteq \mathcal{A}^{\LInj}$; we also have that $\maps\subseteq \mathcal{I}$ implies $\LInj(\mathcal{I})\subseteq \LInj(\maps)$, and \[\cata^{\LInj}\subseteq \maps \text{ if and only if } \cata\subseteq \LInj(\maps).\] These considerations justify the definition below.

\begin{definition}
The \textit{saturation of $\maps$} with respect to Kan-injectivity is defined by,
$$\mathcal{H}^{\text{sat}}:=\left(\LInj(\mathcal{H})\right)^{\LInj}.$$
\end{definition}

It follows from the previous discussion that we have $\LInj(\mathcal{H}^{\text{sat}})=\LInj(\mathcal{\mathcal{H}})$. The following proposition shows that $\mathcal{H}^{\text{sat}}$ is closed under certain constructions. This result will be used along the paper, in particular, in Lemma \ref{lem:ext-to-P} and Proposition \ref{prop:distr-KZ-KZ}.

\begin{proposition}
\label{prop:saturation-irr}
$\maps^{\text{sat}}$ is closed under the following constructions:
\begin{enumerate}

    \item  (Laris) Any lari 1-cell $l\colon A\to B$ belongs to $\maps^\sat$.
    
    \item (Isomorphisms) If $h\in\maps^\sat$ and there exists an isomorphism $h\cong h'$, then $h'\in\maps^\sat$.
    
    \item (Compositions) Given a pair of composable 1-cells $f\colon A\to B$ and $g\colon B\to C$, if $f,g\in\maps^\sat$, then $gf\in\maps^\sat$.

    \item (Reflections) If $h\in\maps^{\text{sat}}$ and there are pseudocommutative squares 
    \begin{equation}\label{ralis}
\begin{tikzcd}[ampersand replacement=\&]
	A \& B \\
	{A'} \& {B'}
	\arrow[""{name=0, anchor=center, inner sep=0}, "h", from=1-2, to=2-2]
	\arrow[""{name=1, anchor=center, inner sep=0}, "s"', from=1-1, to=2-1]
	\arrow["{l_1}"', from=1-2, to=1-1]
	\arrow["{l_2}", from=2-2, to=2-1]
	\arrow["\cong"{description}, draw=none, from=1, to=0]
\end{tikzcd} \hspace{0.5cm}\mbox{and} \hspace{0.5cm}
\begin{tikzcd}[ampersand replacement=\&]
	A \& B \\
	{A'} \& {B'}
	\arrow[""{name=0, anchor=center, inner sep=0}, "h", from=1-2, to=2-2]
	\arrow[""{name=1, anchor=center, inner sep=0}, "s"', from=1-1, to=2-1]
	\arrow["{r_1}", from=1-1, to=1-2]
	\arrow["{r_2}"', from=2-1, to=2-2]
	\arrow["\cong"{description}, draw=none, from=1, to=0]
\end{tikzcd}
    \end{equation}  
    where $l_1$ and $l_2$ are laris with right adjoints $r_1$ and $r_2$, respectively,
     then $s\in\maps^{\text{sat}}$.

\item (Bicocomma objects and  bipushouts).
If in
\begin{equation}\label{square}
\begin{tikzcd}
	A & {A'} \\
	B & {C}
	\arrow["h", from=1-1, to=1-2]
	\arrow["{\overline{h}}"', from=2-1, to=2-2]
	\arrow[""{name=0, anchor=center, inner sep=0}, "r"', from=1-1, to=2-1]
	\arrow[""{name=1, anchor=center, inner sep=0}, "s", from=1-2, to=2-2]
	\arrow[shorten <=13pt, shorten >=13pt, Rightarrow, from=0, to=1]
\end{tikzcd}\end{equation}
 $h\in\maps^{\text{sat}}$,  then $\overline{h}\in\mathcal{H}^{\text{sat}}$, provided that \eqref{square} is a bicocomma object or an invertible 2-cell forming a bipushout.
 \item  (Wide bipushouts). If the diagram
 $$\xymatrix{A\ar[dr]_h^{\hspace{12.5pt}\cong}\ar[rr]^{h_i}&&A_i\ar[ld]^{d_i}\\&B&}$$
 represents a wide bipushout of a family of 1-cells $h_i$ with all of them in $\maps^{\text{sat}}$, then $h\in\maps^{\text{sat}}$.
\end{enumerate}
\end{proposition}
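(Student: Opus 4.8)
The plan is to reduce every one of the six closure statements to a property of the class of \emph{rali}s in $\Cat$, transported through the representable $2$-functors $\catk(-,X)$. Concretely, since $\LInj(\maps^{\sat})=\LInj(\maps)$, a $1$-cell $k$ lies in $\maps^{\sat}$ if and only if (a) every object $X$ of $\LInj(\maps)$ is left Kan injective with respect to $k$, which by Definition~\ref{definizionegenerale} means exactly that $\catk(k,X)$ is a rali in $\Cat$, and (b) every $1$-cell $p\colon X\to Y$ of $\LInj(\maps)$ preserves left Kan extensions along $k$, which by Remark~\ref{rem:Beck-Chev} means exactly that the associated Beck--Chevalley $2$-cell is invertible. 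Thus I fix such $X$ and $p$ throughout and, for each construction, verify (a) and (b) by manipulating the images under $\catk(-,X)$ and their naturality in $X$.

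For the first three items this is immediate. For a lari $l\colon A\to B$ with right adjoint $r$, the contravariant $2$-functor $\catk(-,X)$ sends $l\dashv r$ to the adjunction $(-)\circ r\dashv(-)\circ l$ in $\Cat$, whose unit is the invertible $(-)\circ\eta$ coming from $1\cong rl$; hence $\catk(l,X)$ is a rali for \emph{every} $X$, and (b) holds automatically because $(-)/l=(-)\circ r$ is natural in $X$. For isomorphisms I would use that left Kan extensions, their units, and their preservation are all invariant under replacing $h$ by an isomorphic $h'$. For a composite $gf$ I would use the pasting law $\mathrm{Lan}_{gf}\cong\mathrm{Lan}_g\circ\mathrm{Lan}_f$: the unit of $\mathrm{Lan}_{gf}$ is the paste of the (invertible) units of $\mathrm{Lan}_f$ and $\mathrm{Lan}_g$, so it is invertible, and preservation by $p$ composes by the same pasting.

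The genuinely hard item is the Reflections clause (4), and it is where I expect the main obstacle. Applying $\catk(-,X)$ turns the laris $l_i\dashv r_i$ into adjunctions $V_i\dashv U_i$ in $\Cat$ (with $V_i=(-)\circ r_i$, $U_i=(-)\circ l_i$) whose units are invertible, so $U_iV_i\cong 1$; the two squares of \eqref{ralis} become a mate pair $\catk(l_1,X)\circ\catk(s,X)\cong\catk(h,X)\circ\catk(l_2,X)$ and $\catk(r_1,X)\circ\catk(h,X)\cong\catk(s,X)\circ\catk(r_2,X)$, and $\catk(h,X)$ is a rali. I would then produce a left adjoint of $\catk(s,X)$ with invertible unit by transporting the rali-adjunction for $\catk(h,X)$ across the $V_i\dashv U_i$ via the mate calculus. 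The subtlety --- and the crux of the whole proposition --- is that the naive candidate adjoint built from $V_2$, the reflector $(-)/h$, and $U_1$ yields $\catk(s,X)\circ(\text{that candidate})\cong V_1U_1$ rather than the identity, because the laris only give $U_iV_i\cong 1$ and not $V_iU_i\cong 1$; so both squares, together with the fact that they are mates of one another, must be used simultaneously to force the unit to be invertible. As a first reduction I would also note that items (1)--(3) already give $s l_1\cong l_2 h\in\maps^{\sat}$, which recasts (4) as a right-cancellation of the lari $l_1$ powered by the second square.

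Finally, for the bicocomma/bipushout clause (5) and the wide-bipushout clause (6) I would again apply $\catk(-,X)$, now using that representables send a bicocomma object to a comma object, a bipushout to a pseudo-pullback, and a wide bipushout to a wide pseudo-pullback in $\Cat$ (this is the $\Cat$-valued instance underlying Proposition~\ref{prop:closed_bilimits}). In each case the relevant map $\catk(\overline h,X)$ becomes a projection out of such a (wide) comma/pseudo-pullback, and I would invoke the stability of ralis under these limit constructions --- a reflection remains a reflection when restricted to a comma object or pulled back along an arbitrary functor, and the invertible unit is preserved --- to conclude that $\catk(\overline h,X)$ is a rali. The preservation condition (b) for $1$-cells transports through the very same universal properties. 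I expect this comma/pullback-projection-is-a-rali lemma to be the technical core of (5)--(6), directly generalising the mapping-cone computation already carried out in Proposition~\ref{fromratorali}.
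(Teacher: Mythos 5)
Your handling of items (1)--(3), (5) and (6) is correct and is essentially the paper's own argument: the paper also works representably, sending laris to ralis under $\catk(-,X)$ for (1)--(3), and for (5)--(6) it builds the left adjoint of $\catk(\overline{h},X)$ out of the universal property of the bipullback, comma object, or wide bipullback that $\catk(-,X)$ creates from the colimit square --- which is exactly your ``a projection out of a comma object or bipullback of a rali is again a rali'' lemma.

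The genuine gap is item (4), and it sits exactly where you flag it. You correctly compute that for the candidate left adjoint ${r_2}^*\circ\overline{h}\circ{l_1}^*$ (which is also the paper's candidate) one only gets $\catk(s,X)\circ({r_2}^*\overline{h}{l_1}^*)\cong{r_1}^*{l_1}^*$, and that the lari hypothesis yields ${l_1}^*{r_1}^*\cong 1$ but not ${r_1}^*{l_1}^*\cong 1$. However, your proposed repair --- that both squares together with the mate relation ``force the unit to be invertible'' --- cannot be carried out, because with ``lari'' read literally the statement is false. Take $\catk=\Cat$, $\maps=\emptyset$, $h=1_{\mathbf{1}}$, $l_1\colon\mathbf{1}\to[1]$ the initial object (a lari, with $r_1$ the unique functor $[1]\to\mathbf{1}$), $l_2=r_2=1_{\mathbf{1}}$, and $s\colon[1]\to\mathbf{1}$: both squares commute, yet $\catk(s,X)\colon X\to X^{[1]}$, $x\mapsto 1_x$, has left adjoint $\mathrm{cod}$ whose unit $f\to 1_{\mathrm{cod}(f)}$ is not invertible, so $s\notin\emptyset^{\text{sat}}$ even though the hypotheses of clause (4) hold.

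What the paper's proof actually uses is invertibility of the \emph{counit} of $l_1\dashv r_1$: it invokes ``$r_1$ is rari'' twice, once to make ${l_1}^*=\catk(l_1,X)$ fully faithful, so that the chain of bijections $\mathrm{Hom}({l_1}^*g,{l_1}^*s^*g')\cong\mathrm{Hom}(g,s^*g')$ closes, and once to get ${r_1}^*{l_1}^*=(l_1r_1)^*\cong\mathrm{id}$, which is precisely what makes the unit you computed invertible. In other words, the hypothesis that the proof (and the truth of the statement) requires is that the $l_i$ be \emph{lalis} rather than laris; this is also the hypothesis available in the paper's sole application of this clause, Proposition \ref{prop:distr-KZ-KZ}, where $l_1$ is an $S$-pseudoalgebra structure map $a\dashv s_Y$ with invertible counit. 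So your instinct that the stated hypothesis cannot support the construction is sound, but the missing ingredient is not a cleverer mate calculation --- it is this unit/counit correction, after which the argument is exactly the paper's chain of bijections.
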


\begin{proof}
\begin{enumerate}

    \item \textbf{Laris:} For any $X\in\catk$ and any lari 1-cell $l\colon A\to B$, we want to show that $X$ is Kan injective with respect to $l$, i.e. $\catk(l,X)$ is rali. This is true because the 2-functor $\catk(-,X)$ send lari 1-cells to rali 1-cells (see \cite[Remark~I,6.5]{Gray:Adj}). 
    
    \item \textbf{Isomorphisms:} Clearly, for any $X\in\catk$, if $h\cong h'$, then also $\catk(h,X)\cong\catk(h',X)$. Hence, if $X$ is Kan injective with respect to $h$, then $X$ is also Kan injective with respect to $h'$. 
    
    \item \textbf{Composition:} This follows since the composition of ralis is a rali.

    \item \textbf{Reflections:} Let us consider the pseudocommutative squares (\ref{ralis}),
 and let $X$ be left Kan injective with respect to $h$, i.e. $h^\ast:=\catk(h,X)$ is a rali. We want to show that  $\catk(s,X)$ is a rali as well. Applying $\catk(-,X)$ to the pseudocommutative square with $l_1$ and $l_2$ we get the pseudocommutative square below. 
\[\begin{tikzcd}[ampersand replacement=\&]
	{\catk(A,X)} \&\& {\catk(B,X)} \\
	\\
	{\catk(A',X)} \&\& {\catk(B',X)}
	\arrow[""{name=0, anchor=center, inner sep=0}, "{h^\ast}", from=3-3, to=1-3]
	\arrow[""{name=1, anchor=center, inner sep=0}, "{s^\ast}", from=3-1, to=1-1]
	\arrow[""{name=2, anchor=center, inner sep=0}, "{{l_1}^\ast}"', from=1-1, to=1-3]
	\arrow[""{name=3, anchor=center, inner sep=0}, "{{l_2}^\ast}", from=3-1, to=3-3]
	\arrow[""{name=4, anchor=center, inner sep=0}, "{(-)/h=:\overline{h}}", curve={height=-24pt}, from=1-3, to=3-3]
	\arrow[""{name=5, anchor=center, inner sep=0}, "{{r_1}^\ast}"', curve={height=24pt}, from=1-3, to=1-1]
	\arrow[""{name=6, anchor=center, inner sep=0}, "{{r_2}^\ast}", curve={height=-24pt}, from=3-3, to=3-1]
	\arrow["{?}"', curve={height=24pt}, dashed, from=1-1, to=3-1]
	\arrow["\cong"{description}, draw=none, from=1, to=0]
	\arrow["\vdash"{description}, draw=none, from=0, to=4]
	\arrow["\top"{description}, draw=none, from=3, to=6]
	\arrow["\perp"{description}, draw=none, from=2, to=5]
\end{tikzcd}\]
    We want to find a left adjoint to $s^\ast$ with invertible unit. We claim that  ${r_2}^\ast\circ \overline{h}\circ {l_1}^\ast$ is the required left adjoint. Let us consider two maps $g\colon A\to X$ and $g'\colon A'\to X$, then,
    \begin{center}
    {\renewcommand{\arraystretch}{1.5}
    \begin{tabular}{rcl}
        ${r_2}^\ast\circ \overline{h}\circ {l_1}^\ast g$
        & $\longrightarrow$ & $g'$ \\
        \hline
        $\overline{h}{l_1}^\ast g$
        & $\longrightarrow$ & ${l_2}^\ast g'$ \\
        \hline
        ${l_1}^\ast g$
        & $\longrightarrow$ & $h^\ast{l_2}^\ast g'$ \\
        \hline
        ${l_1}^\ast g$
        & $\longrightarrow$ & ${l_1}^\ast s^\ast g'$ \\
        \hline
        $g$
        & $\longrightarrow$ & $s^\ast g'$ \\
    \end{tabular}
    }%
    \end{center}
    We note that in this chain of bijections we used two adjunctions, the isomorphisms  ${l_1}^\ast s^\ast\cong h^\ast{l_1}^\ast$ and that since  $r_1$ is rari, then ${l_1}^\ast$ is fully faithful. Clearly this bijection is natural, so we have left to check only that the unit of this adjunction is invertible.
     Setting $g':={r_2}^\ast\overline{h}{l_1}^\ast g$, following the bijections above we obtain
   \begin{center}
       \begin{tabular}{rlr}
         $g\to s^\ast{r_2}^\ast\overline{h}{l_1}^\ast g$
         & $\cong {r_1}^\ast h^\ast\overline{h}{l_1}^\ast g$
         & (by $r_2s\cong hr_1$)\\
         & $\cong {r_1}^\ast {l_1}^\ast g$
         & (by $h^\ast$ rali)\\
         & $\cong g$
         & (by $r_1$ rari).
    \end{tabular}
   \end{center}

    \item \textbf{Bipushouts:}
Consider the diagram below, we want to show that if $X$ is Kan injective with respect to $h$, and the square \eqref{square} is a bipushout, then $X$ is also Kan injective with respect to $k$. The diagram shows how to construct the candidate Kan extension of $s$ using the universal property of the bipushout.

\[\begin{tikzcd}
	A & \bullet && A & \bullet && A & \bullet \\
	{A'} & \bullet && {A'} &&& {A'} & \bullet \\
	&& X &&& X &&& X
	\arrow[""{name=0, anchor=center, inner sep=0}, "h"', from=1-1, to=2-1]
	\arrow["f", from=1-1, to=1-2]
	\arrow[""{name=1, anchor=center, inner sep=0}, "{k}", from=1-2, to=2-2]
	\arrow["{g}"', from=2-1, to=2-2]
	\arrow["s"{description}, curve={height=-12pt}, from=1-2, to=3-3]
	\arrow["f", from=1-4, to=1-5]
	\arrow["h"', from=1-4, to=2-4]
	\arrow[""{name=2, anchor=center, inner sep=0}, "s"{description}, curve={height=-12pt}, from=1-5, to=3-6]
	\arrow["{(sf)/h}"', curve={height=12pt}, dashed, from=2-4, to=3-6]
	\arrow[""{name=3, anchor=center, inner sep=0}, "{k}", from=1-8, to=2-8]
	\arrow[""{name=4, anchor=center, inner sep=0}, "h"', from=1-7, to=2-7]
	\arrow[from=2-7, to=2-8]
	\arrow["f", from=1-7, to=1-8]
	\arrow[""{name=5, anchor=center, inner sep=0}, "{(sf)/h}"{description}, curve={height=12pt}, from=2-7, to=3-9]
	\arrow[""{name=6, anchor=center, inner sep=0}, "s"{description}, curve={height=-12pt}, from=1-8, to=3-9]
	\arrow["{s/k}",dashed, from=2-8, to=3-9]
	\arrow["\cong"{description}, Rightarrow, draw=none, from=0, to=1]
	\arrow["\cong"{description}, Rightarrow, draw=none, from=4, to=3]
	\arrow["\cong"{description}, Rightarrow, draw=none, from=2-8, to=6]
	\arrow["\cong"{description}, Rightarrow, draw=none, from=2-8, to=5]
	\arrow["\cong"{description}, Rightarrow, draw=none, from=2-4, to=2]
\end{tikzcd}\]

If we follow this approach to show the univeral property of the Kan extension the proof would be very technical. Instead, we follow a more formal approach. In the diagram below, the situation above is formulated in terms of $h^*$ having a left adjoint. Recall that the diagram in the middle must be a bipullback, and we can thus construct the dashed functor on the right.

\[  \adjustbox{scale=0.85}{\begin{tikzcd}[ampersand replacement=\&]
	\&\&\&\& {\catk(B,X)} \\
	A \& B \& {\catk(B',X)} \& {\catk(B,X)} \&\& {\catk(B',X)} \& {\catk(B,X)} \\
	{A'} \& {B'} \& {\catk(A',X)} \& {\catk(A,X)} \&\& {\catk(A',X)} \& {\catk(A,X)}
	\arrow["h"', from=2-1, to=3-1]
	\arrow["f", from=2-1, to=2-2]
	\arrow["k", from=2-2, to=3-2]
	\arrow["g"', from=3-1, to=3-2]
	\arrow[""{name=0, anchor=center, inner sep=0}, "{h^*}", curve={height=-12pt}, from=3-3, to=3-4]
	\arrow["{k^*}", from=2-3, to=2-4]
	\arrow["{f^*}", from=2-4, to=3-4]
	\arrow["{g^*}"', from=2-3, to=3-3]
	\arrow[""{name=1, anchor=center, inner sep=0}, "{(-)/h}", curve={height=-12pt}, dashed, from=3-4, to=3-3]
	\arrow["\lrcorner"{anchor=center, pos=0.125, rotate=180}, draw=none, from=3-2, to=2-1]
	\arrow["\lrcorner"{anchor=center, pos=0.125}, draw=none, from=2-3, to=3-4]
	\arrow["{\text{id}}", curve={height=-18pt}, from=1-5, to=2-7]
	\arrow["{h^*}"', from=3-6, to=3-7]
	\arrow["{f^*}", from=2-7, to=3-7]
	\arrow["{k^*}", from=2-6, to=2-7]
	\arrow["{g^*}"', from=2-6, to=3-6]
	\arrow["{(-)/h \circ f^*}"', curve={height=18pt}, from=1-5, to=3-6]
	\arrow[dashed, from=1-5, to=2-6]
	\arrow["\dashv"{anchor=center, rotate=90}, draw=none, from=1, to=0]
\end{tikzcd}}\]

We now want to show that the dashed arrow provides a left adjoint for $k^*$. We shall call $(-)//k$ the dashed functor. By the universal property of the bipullback, we already have the invertible map $\text{1} \to k^* \circ (-)//k$, which will be our unit. To construct the counit, we consider the diagram below, and use the $2$-dimensional part of the universal property of the bipullback to obtain the desired $2$-cell $(-)//k \circ k^*\to \text{1}$.

  \[\adjustbox{scale=0.7}{\begin{tikzcd}[ampersand replacement=\&]
	{\catk(B',X)} \&\&\&\& {\catk(B',X)} \\
	{\catk(B,X)} \&\&\&\& {\catk(B,X)} \& {\catk(A',X)} \& {\catk(B',X)} \& {\catk(B,X)} \\
	\& {\catk(B',X)} \& {\catk(B,X)} \& {=} \&\& {\catk(A,X)} \\
	\& {\catk(A',X)} \& {\catk(A,X)} \&\& {\catk(B',X)} \&\& {\catk(A',X)} \& {\catk(A,X)}
	\arrow[""{name=0, anchor=center, inner sep=0}, "{k^\ast}", curve={height=-18pt}, from=1-1, to=3-3]
	\arrow["{h^*}"', from=4-2, to=4-3]
	\arrow["{f^*}", from=3-3, to=4-3]
	\arrow["{k^*}", from=3-2, to=3-3]
	\arrow["{g^*}"', from=3-2, to=4-2]
	\arrow["{k^\ast}"', from=1-1, to=2-1]
	\arrow["{(-)//k}"{description}, from=2-1, to=3-2]
	\arrow["id", curve={height=-24pt}, from=1-5, to=2-7]
	\arrow["{k^*}", from=2-7, to=2-8]
	\arrow["{f^*}",from=2-8, to=4-8]
	\arrow["{g^*}", from=2-7, to=4-7]
	\arrow[""{name=1, anchor=center, inner sep=0}, "{k^\ast}"', from=1-5, to=2-5]
	\arrow["{g^*}"', from=4-5, to=4-7]
	\arrow["{(-)//k}"', from=2-5, to=4-5]
	\arrow["{f^*}"{description}, from=2-5, to=3-6]
	\arrow[""{name=2, anchor=center, inner sep=0}, "{(-)/h }", from=3-6, to=4-7]
	\arrow["{g^*}"{description}, from=1-5, to=2-6]
	\arrow[""{name=3, anchor=center, inner sep=0}, "{h^*}", from=2-6, to=3-6]
	\arrow["{\epsilon_h\circ g^\ast}"{yshift=0.075cm}, shorten <=4pt, shorten >=4pt, Rightarrow, from=2-6, to=2-7]
	\arrow["{h^*}"', from=4-7, to=4-8]
	\arrow["{\eta_k^{-1}\circ k^\ast}"'{xshift=-0.1cm,yshift=-0.05cm}, shorten <=20pt, shorten >=20pt, Rightarrow, from=2-1, to=0]
	\arrow["\cong"{description}, draw=none, from=4-5, to=2]
	\arrow["\cong"{description}, draw=none, from=1, to=3]
\end{tikzcd}}\]

Moreover, given a 1-cell $p:X\to X'$ which is left Kan injective with respect to $h$, using the construction above of $(-)/k:=(-)//k$ and Remark \ref{rem:Beck-Chev}, we conclude that $p$ is also left Kan injective with respect to $k$.

   \noindent \textbf{Bicocomma objects:} We follow the same argument of the second part of  Proposition \ref{fromratorali}. Indeed, in the notation of that proposition, if $X$ was Kan injective with respect to $h$ (as opposed to weak Kan injective) the result is true a fortiori. Also, in the proof we never use the fact that the $1$-cell $A \to A$ is the identity, it could be any $1$-cell. This delivers the proof.

   \item     \textbf{Wide bipushouts:}
       The proof is completely similar to the one for bipushouts. Using the left Kan injectivity of $X$ with respect to all $h_i$ by means of the hom-functor $\catk(-,X)$, we obtain a wide bipullback and, as a consequence, a left adjoint of $\catk(h,X)$ making it a rali:
       $$\xymatrix{\catk(A_i,X)\ar[rr]^{\catk(h_i,X)}&&\catk(A,X)\\
       &\catk(B,X)\ar[ru]^{\catk(d_i,X)}\ar[lu]_{\catk(h,X)}&
   \\&\catk(A,X)\ar@/^4ex/[luu]^{(-)/h_i}\ar@/_4ex/[ruu]_{\text{id}}\ar@{-->}[u]_{(-)/h}&
       }$$
       That is, for each $s:B\to X$, the 1-cell $s/h$ is obtained by the universality of the wide bipushout:
   \begin{equation}\label{DD2}
\begin{tikzcd}
	{A} && {A_i} \\
	& {B} \\
	& X
	\arrow[""{name=0, anchor=center, inner sep=0}, "{h_i}", from=1-1, to=1-3]
	\arrow[""{name=1, anchor=center, inner sep=0}, "{s}"', curve={height=12pt}, from=1-1, to=3-2]
	\arrow[""{name=2, anchor=center, inner sep=0}, "{s/h_i}", curve={height=-12pt}, from=1-3, to=3-2]
	\arrow["{h}"{description}, from=1-1, to=2-2]
	\arrow["{s/h}"{description}, from=2-2, to=3-2]
	\arrow["{d_i}"{description}, from=1-3, to=2-2]
	\arrow["\cong"{description}, Rightarrow, draw=none, from=0, to=2-2]
	\arrow["\cong"{description}, Rightarrow, draw=none, from=1, to=2-2]
	\arrow["\cong"{description}, Rightarrow, draw=none, from=2-2, to=2]
\end{tikzcd}
\end{equation}
\end{enumerate}
\end{proof}


\section{KZ-pseudomonads presented via Kan-injectivity} \label{sec2}

Idempotent monads over a category $\cc$ are precisely those whose categories of algebras are full reflective subcategories of $\cc$. Thus, an idempotent monad may be presented by orthogonality with respect to the family  $(\delta_X\colon X\to \bar{X})_{X\in\cc}$ of reflections into the corresponding reflective subcategory. In this section, we see that, analogously, a KZ-monad may be presented by left Kan injectivity with respect to a family of 1-cells $(\delta_X\colon X\to \bar{X})_{X\in\catk}$, where every $\bar{X}$ is essentially a pseudoalgebra. These facts will have an important role in Section \ref{TheTheorem}.

We recall from \cite{kock_1995} that a KZ-pseudomonad, also known as a lax-idempotent pseudomonad or KZ-doctrine, can be described as a pseudomonad with unit $\unit$ and  multiplication $\mu$
 such that $\mu$ is a right adjoint to $T\unit$ (and a left adjoint to $\unit_T$) with convenient coherence relations. By a \textit{KZ-adjunction} we mean a biadjunction whose induced pseudomonad is a KZ-pseudomonad.

The next theorem is essentially contained in \cite{marm-wood_lax-idemp}, as we explain in the proof. For the particular case of order-enriched categories, it was given in \cite[Theorem~3.4]{carv-sousa_order-refl}. {\blue Concerning the notion of density for  1-cells used in that result, we give the following lemma.}


\blue
\begin{lemma}
\label{lem:iso-dense-iff-dense}
    Let $\catk$ be a 2-category and $f\colon X\to Y$ a 1-cell in $\catk$. Then, the following are equivalent:
    \begin{enumerate}[label=(\arabic*)]
        \item There exists an invertible 2-cell $\xi\colon f \Rightarrow f$ making $(1_Y,\xi)$ a Kan extension of $f$ along itself.
        \item The pair $(1_Y,1_f)$ is a Kan extension of $f$ along itself. 
    \end{enumerate} 
\end{lemma}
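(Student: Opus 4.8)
The plan is to dispatch the implication $(2)\Rightarrow(1)$ for free and to concentrate on $(1)\Rightarrow(2)$, which is the only substantive direction. For $(2)\Rightarrow(1)$ one simply takes $\xi=1_f$: the identity 2-cell is invertible, so the pair $(1_Y,1_f)$ already furnishes the data required in $(1)$.

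For the converse, I would reformulate ``(left) Kan extension of $f$ along itself'' as a bijection of 2-cells and then absorb the invertible $\xi$. Saying that $(1_Y,\xi)$ is a left Kan extension of $f$ along $f$ means exactly that, for every 1-cell $g\colon Y\to Y$, the assignment
\[
\Phi_g\colon \catk(Y,Y)(1_Y,g)\longrightarrow \catk(X,Y)(f,g\hcomp f),\qquad \bar\alpha\longmapsto (\bar\alpha\hcomp f)\cdot\xi,
\]
is a bijection; and that $(1_Y,1_f)$ is a Kan extension means that $\Psi_g\colon \bar\alpha\mapsto (\bar\alpha\hcomp f)\cdot 1_f=\bar\alpha\hcomp f$ is a bijection. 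Note that no naturality in $g$ is needed: the pointwise bijectivity is precisely the universal property as stated after \eqref{eq:Kan-ext}.

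The key step is then to observe that $\Phi_g$ and $\Psi_g$ differ only by vertical composition with the invertible 2-cell $\xi$. Indeed, since $\xi\cdot\xi^{-1}=1_f$, for every $\bar\alpha\colon 1_Y\Rightarrow g$ one has $\Psi_g(\bar\alpha)=(\bar\alpha\hcomp f)=((\bar\alpha\hcomp f)\cdot\xi)\cdot\xi^{-1}=\Phi_g(\bar\alpha)\cdot\xi^{-1}$. Writing $R_{\xi^{-1}}\colon\gamma\mapsto\gamma\cdot\xi^{-1}$ for vertical precomposition with $\xi^{-1}$, this reads $\Psi_g=R_{\xi^{-1}}\circ\Phi_g$. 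Because $\xi$ is invertible, $R_{\xi^{-1}}$ is a bijection of the hom-set $\catk(X,Y)(f,g\hcomp f)$ onto itself, with inverse $\gamma\mapsto\gamma\cdot\xi$; hence $\Psi_g$, being a composite of two bijections, is itself a bijection. As $g$ was arbitrary, this is exactly the universal property exhibiting $(1_Y,1_f)$ as a Kan extension of $f$ along itself.

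The only point demanding care --- and the reason invertibility of $\xi$ is indispensable --- is that here the two candidate Kan extensions share the same underlying 1-cell $1_Y$ and differ solely in their comparison 2-cell. The familiar ``transport a Kan extension along an isomorphism'' manoeuvre is thus applied not to the extending 1-cell but to the unit itself, and invertibility of $\xi$ is precisely what makes $R_{\xi^{-1}}$ bijective and lets us pass between the two universal properties. No coherence or pointwise hypothesis on the Kan extension is required, so I expect no real obstacle beyond bookkeeping of the vertical composites.
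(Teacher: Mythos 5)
Your proposal is correct and is essentially the paper's own proof: both arguments reformulate each condition as bijectivity of a hom-set map ($-\circ f$ for (2), and its composite with $-\cdot\xi$ for (1)), and then use invertibility of $\xi$ to conclude that the two maps differ by a bijection, so each is bijective exactly when the other is. The only cosmetic difference is that you spell out the trivial direction via $\xi=1_f$ and name the composition $R_{\xi^{-1}}$ explicitly, which the paper leaves as "clearly."
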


\begin{proof} 

    One can check that $(1)$ is saying that there exists an invertible 2-cell $\xi$ making the composite function below bijective, while $(2)$ means that the first function is such. 
\[\begin{tikzcd}[ampersand replacement=\&]
	{\catk(Y,Y)[1_Y,g]} \& {\catk(X,Y)[f,gf]} \& {\catk(X,Y)[f,gf]}
	\arrow["{-\circ f}", from=1-1, to=1-2]
	\arrow["{-\cdot \xi}", from=1-2, to=1-3]
\end{tikzcd}\]
Clearly $(2)$ implies $(1)$. For the other implication, we just need to notice that, since $\xi$ is invertible, the function $-\cdot\xi$ is a bijection. From this it follows that $-\circ f$ is bijective as it can be written as composite of bijective maps. \black  
\end{proof}


\begin{definition}\label{def:dense} We call a 1-cell \textbf{dense} if it satisfies any of the two equivalent conditions in Lemma~\ref{lem:iso-dense-iff-dense}.\end{definition}

\black 

\begin{theorem} \label{thm:marm-wood}
\begin{enumerate}

    \item[(1)] Let $\cata$ be a locally full (and locally replete) sub-2-category of the 2-category $\catk$, and let
\begin{center}
    $d_X\colon X\to DX$,  $X\in\catk$,
\end{center}
be a family of 1-cells with $\cata\subseteq \slinj(\lbrace d_X\colon X\to DX\mid X\in\catk\rbrace)$ and such that:
\begin{enumerate}
    \item[(a)] For all $X\in\catk$, $DX\in\cata$, and,  for every $f\colon X\to A$ with $A\in\cata$, $f/d_X\in\cata$.
    \item[(b)] Every $d_X$ is dense. 
\end{enumerate}
Then, the inclusion $\cata\hookrightarrow\catk$  is the right part of a  KZ-adjunction in $\catk$.
    \item[(2)] Conversely, every KZ-pseudomonad $\mathbb{D}$ may be induced by the data in (1) where $d\colon\text{Id}_{\catk}\to D$ is the unit.
\end{enumerate}

\end{theorem}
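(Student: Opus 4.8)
The plan is to prove Theorem~\ref{thm:marm-wood} by connecting the data $(d_X, DX)$ to the defining adjoint structure of a KZ-pseudomonad, leaning on the results of Marmolejo and Wood as signalled in the paper. For part~(1), the strategy is to exhibit the inclusion $\cata \hookrightarrow \catk$ as the right biadjoint of a biadjunction and then verify that the induced pseudomonad is lax-idempotent. The left biadjoint $D$ should send $X$ to $DX$, which by hypothesis~(a) lands in $\cata$; its action on a 1-cell $f\colon X\to Y$ is obtained by left Kan injectivity, setting $Df := (d_Y\circ f)/d_X$, which exists and lies in $\cata$ again by~(a) together with $\cata \subseteq \slinj(\{d_X\})$. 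The family $d\colon \mathrm{Id}_\catk \to D$ will be the unit, and the core of the argument is to check the universal property of $DX$: for any $A\in\cata$ and any $f\colon X\to A$, precomposition with $d_X$ should induce an equivalence $\catk(DX, A)\simeq \catk(X,A)$. This is precisely where left Kan injectivity of $A$ with respect to $d_X$ delivers the required adjoint on hom-categories, with the invertibility of the Kan-extension 2-cell $\xi$ (the strong, not merely weak, notion) making the unit of the hom-adjunction invertible.

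Next I would identify the multiplication and the KZ (lax-idempotence) condition. Writing $\mu_X\colon D^2X \to DX$ for the structure obtained from $D$ applied to $DX\in\cata$, the goal is to show $\mu$ is right adjoint to $Dd$ with invertible unit, equivalently that the whole 2-monad is lax-idempotent. This is exactly the content of the Marmolejo--Wood characterization, which presents a KZ-pseudomonad and its algebras via left Kan extensions; I would invoke their result to translate the Kan-injectivity data into the adjoint-string condition $D d_X \dashv \mu_X \dashv d_{DX}$. Hypothesis~(b), that each $d_X$ is dense in the sense of Definition~\ref{def:dense}, is what guarantees the algebra structure is essentially unique and that the reflection is genuinely lax-idempotent rather than merely giving a plain biadjunction: density ensures $(1_{DX}, 1_{d_X})$ is a Kan extension of $d_X$ along itself, which is the fibrewise manifestation of idempotency of the monad.

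For part~(2), the converse, I would start from a given KZ-pseudomonad $\mathbb{D}$ with unit $d\colon \mathrm{Id}_\catk \to D$ and take $\cata$ to be its 2-category of pseudoalgebras (realized as a locally full, locally replete sub-2-category via the comparison), with the family of 1-cells being precisely the unit components $d_X\colon X\to DX$. The work here is to verify that this data satisfies (a) and (b): that every $DX$ is a (free) pseudoalgebra hence in $\cata$, that Kan extensions along $d_X$ of maps into algebras stay in $\cata$ (which follows from the KZ-structure, since algebra structure maps are themselves obtained as such Kan extensions), and that each $d_X$ is dense, which is a standard consequence of lax-idempotence via the adjoint string. The closure of Kan extensions inside $\cata$ uses that a pseudoalgebra's structure map is the left Kan extension of the identity along its unit, a hallmark of KZ-pseudomonads established by Marmolejo--Wood.

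The \textbf{main obstacle} I anticipate is the bookkeeping of coherence in the pseudo-setting: verifying that the assignments $Df = (d_Y f)/d_X$ assemble into a genuine pseudofunctor (respecting composition and identities up to coherent invertible modifications), and that the adjoint string $Dd_X \dashv \mu_X \dashv d_{DX}$ satisfies the triangle-type coherence laws required of a KZ-pseudomonad. In the strict 2-categorical or order-enriched case treated in \cite{carv-sousa_order-refl} these coherences are automatic or trivial, but here each equality must be upgraded to a specified invertible 2-cell satisfying the relevant pasting identities. Since the theorem is stated as ``essentially contained in'' Marmolejo--Wood, the honest path is to isolate exactly which of their lemmas supply these coherences and to check that our Kan-injectivity hypotheses (a) and (b) match their input hypotheses, rather than to reconstruct the coherence diagrams from scratch; the density condition of Lemma~\ref{lem:iso-dense-iff-dense} is the precise bridge making that match work.
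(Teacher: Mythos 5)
Your overall plan follows the same route as the paper: define the left biadjoint on objects by $X\mapsto DX$ and on $1$-cells by $Df:=(d_Y\circ f)/d_X$, obtain the biadjunction from Kan-extension adjunctions between hom-categories, and outsource the pseudofunctoriality/coherence issues and part (2) to Marmolejo--Wood. The gap sits exactly at the crux of part (1), the hom-category equivalence. There are two problems. First, the equivalence you assert, $\catk(DX,A)\simeq\catk(X,A)$, is false in general: an arbitrary $1$-cell $DX\to A$ of $\catk$ need not preserve Kan extensions along the $d$'s and is not determined, even up to isomorphism, by its restriction along $d_X$ (in $\Cat$, with $D$ a free cocompletion, precomposition with the unit is not fully faithful on arbitrary functors out of $DX$). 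The correct statement, forced by the fact that the right biadjoint is the inclusion of $\cata$, is $\cata(DX,A)\simeq\catk(X,A)$, and the restriction to $1$-cells of $\cata$ is not cosmetic but essential. Second, and more seriously, Kan injectivity of $A$ only yields an adjunction $(-)/d_X\dashv(-)\circ d_X$ whose \emph{unit} $\xi$ is invertible; for an equivalence you must also invert the counit, which at $g\in\cata(DX,A)$ is the canonical $2$-cell $(g\circ d_X)/d_X\Rightarrow g$. This is precisely where both missing ingredients enter: because $g$ lies in $\cata$ it preserves the Kan extension, so $(g\circ d_X)/d_X\cong g\circ(d_X/d_X)$, and then density of $d_X$ (hypothesis (b), via Remark~\ref{rem:dense}) gives $d_X/d_X\cong 1_{DX}$, hence $(g\circ d_X)/d_X\cong g$. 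Your proposal never addresses the counit; instead it assigns density the role of upgrading an already-constructed ``plain biadjunction'' to a lax-idempotent one.

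That reading would make the proof fail: without (b) the hom-adjunction is not an equivalence, so the biadjunction with unit $d$ that you are constructing does not exist at all. Density is not an extra KZ-flavoured condition layered on top of an existing reflection; it is what makes the reflection exist. (Conversely, lax-idempotence is genuinely not automatic for a biadjunction into a locally full sub-$2$-category --- this is why the paper routes the argument through Marmolejo--Wood's notion of left Kan pseudomonad rather than through a bare biadjunction --- so that verification is still needed, but it is a separate issue from the one density settles.) A smaller omission in the same spirit: if, as you propose, you reduce to Marmolejo--Wood by matching their input hypotheses, note that their definition of left Kan pseudomonad also includes the composition axiom $(f^{\mathbb{D}}\circ g)^{\mathbb{D}}\cong f^{\mathbb{D}}\circ g^{\mathbb{D}}$, which the paper verifies using $\cata\subseteq\slinj(\lbrace d_X\rbrace)$; your checklist should include this step as well.
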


\begin{remark}\label{rem:dense}
  Under assumption (a), condition (b) is equivalent to the following condition used in  \cite{carv-sousa_order-refl} in the Ord-enriched case:
\begin{enumerate}
\item[(b$^{\prime}$)] $(f d_X)/d_X \cong f$ for all $f\colon DX\to A$ in $\cata$.
\end{enumerate}
Indeed, assuming (b), with $f\in \cata$, we have that $(fd_X)/d_X\cong f(d_X/d_X)\cong f$.
\end{remark}

\begin{proof}[of Theorem \ref{thm:marm-wood}]
\begin{enumerate}
    \item[(1)] Recall, from  \cite[Definition~3.1]{marm-wood_lax-idemp}, that a \emph{left Kan pseudomonad}  $\mathbb{D}$ consists of the following data:
\begin{enumerate}
\item
for every  $X\in\catk$, a 1-cell
$$d_X\colon X\to DX;$$
    \item
for every 1-cell $f\colon X\to DY$, a left Kan extension of $f$ along $d_X$
\[\begin{tikzcd}[ampersand replacement=\&]
	X \&\& DX \\
	\&\& DY
	\arrow["{d_X}", from=1-1, to=1-3]
	\arrow[""{name=0, anchor=center, inner sep=0}, "f"', from=1-1, to=2-3]
	\arrow[""{name=1, anchor=center, inner sep=0}, "{f^\mathbb{D}}", from=1-3, to=2-3]
	\arrow["{\mathbb{D}_f}", shorten <=12pt, shorten >=12pt, Rightarrow, from=0, to=1]
\end{tikzcd}\]
with $\mathbb{D}_f$ invertible;
    \item for every $f\colon X\to Y$ and $g\colon Z\to DX$, $(f^\mathbb{D}\circ g)^{\mathbb{D}}\cong f^\mathbb{D}\circ g^{\mathbb{D}}$;
    \item every $d_X$ is dense. 
\end{enumerate}

Marmolejo and Wood proved in \cite[Theorem~4.1]{marm-wood_lax-idemp} that this data induces a KZ-pseudomonad $\mathbb{D}=(D,d,m)$.\footnote{Marmolejo and Wood studied the dual situation: \emph{right} Kan and \emph{colax-idempotent} pseudomonads.} Following the proof of their theorem, we see that the given $D$ is extended to the endo-pseudofunctor $D\colon\catk\to \catk$, and $d$ is extended to a strong transformation which is the unit of the pseudomonad.
It is clear that, under the hypotheses of our  Theorem~\ref{thm:marm-wood}, the family $d_X, X\in \catk$, fulfils the conditions defining a left Kan pseudomonad, where $f^{\mathbb{D}}$ is  an existing left Kan extension $f/d_X$.  {\blue For (c), observe that $(f^{\mathbb{D}}\circ g)^{\mathbb{D}}\cong (f/d_X\circ g)/d_Z\cong (f/d_X)\circ (g/d_Z)\cong f^{\mathbb{D}}\circ g^{\mathbb{D}}$, the second isomorphism due to  $f/d_X$ belonging to $\cata\subseteq \LInj(\{d_X\mid X\in \catk\})$.} The pseudofunctor $D\colon \catk\to \catk$ is defined on 1-cells by $Df=(d_Y\circ f)^{\mathbb{D}}\cong (d_Y\circ f)/d_X$, which lies in $\cata$. Thus, $D$ admits a corestriction $D_{\cata}$ to $\cata$. Moreover, from Remark~\ref{rem:dense}, for every $f\colon X\to A$ with $A\in \cata$, the morphism  $f/d_X\colon DX\to A$ is the unique 1-cell of $\cata$, up to isomorphism, such that $(f/d_X)\circ d_X \cong f$. 
 {\blue Since, for every $f\colon X\to A$ with $A\in \cata$, $f/d_X$ belongs to $\cata$, we obtain, via Kan extensions, an adjunction between the hom-categories $\catk(X,A)$ and $\cata(DX, A)$. The unit and counit of this adjunction are invertible, the first one by definition of Kan injectivity, the second one by condition (b$^{\prime}$) described in Remark~\ref{rem:dense}.  Thus, the adjunction is  indeed an equivalence (pseudonatural in $X$ and $A$), and we  have a biadjunction between $\cata$ and $\catk$. More precisely,} the inclusion functor of $\cata$ into $\catk$ is the right 2-functor of a KZ-adjunction
\[\begin{tikzcd}
	\cata && \catk
	\arrow[""{name=0, anchor=center, inner sep=0}, curve={height=12pt}, hook, from=1-1, to=1-3]
	\arrow[""{name=1, anchor=center, inner sep=0}, "{D_{\cata}}"', curve={height=12pt}, from=1-3, to=1-1]
	\arrow["\perp"{description}, Rightarrow, draw=none, from=1, to=0]
\end{tikzcd}\]
whose induced pseudomonad is $\mathbb{D}$.

\item[(2)]  This is \cite[Theorem 4.2]{marm-wood_lax-idemp}. 
\end{enumerate} 
\end{proof}

\begin{remark}
    \blue 
    We underline that in \cite[Definition~3.1]{marm-wood_lax-idemp} Marmolejo and Wood use the definition of density with the identity 2-cell as part of the Kan extension, whereas in our main result (Theorem~\ref{thm:linj-kz-adj}) we are going to use condition (2) of Lemma~\ref{lem:iso-dense-iff-dense}.
\end{remark}

The next theorem describes the category of pseudoalgebras of a KZ-pseudomonad by means of left Kan injectivity. {\blue We will make use of the following lemma} (proved in \cite[Proposition 2.13]{carv-sousa_order-refl} for the particular case of order-enriched categories), which
shows how Kan injectivity interacts with lali 1-cells.

\begin{lemma}
\label{lem:left-inj-vs-lali}
Every sub-2-category $\LInj(\maps)$ is closed under lalis, that is:
 for any pseudocommutative diagram
\[\begin{tikzcd}
	A & B \\
	X & {Y,}
	\arrow["f", from=1-1, to=1-2]
	\arrow["g"', from=2-1, to=2-2]
	\arrow[""{name=0, anchor=center, inner sep=0}, "{l_1}"', from=1-1, to=2-1]
	\arrow[""{name=1, anchor=center, inner sep=0}, "{l_2}", from=1-2, to=2-2]
	\arrow["\cong"{description}, Rightarrow, draw=none, from=0, to=1]
\end{tikzcd}\]
with $f$ a 1-cell of $\slinj(\maps)$ and $l_1,l_2$ lalis,
    then also $g$ belongs to $\slinj(\maps)$.
\end{lemma}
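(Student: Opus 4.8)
The plan is to transport everything across the two lali adjunctions, using two elementary facts about left Kan extensions. Write $l_1\dashv r_1$ and $l_2\dashv r_2$ for the adjunctions exhibiting $l_1,l_2$ as lalis, so that the counits give invertible isomorphisms $l_1r_1\cong 1_X$ and $l_2r_2\cong 1_Y$, and fix the invertible $\theta\colon g\hcomp l_1\cong l_2\hcomp f$ witnessing pseudocommutativity of the square. The first fact I will use is standard: any left adjoint preserves all left Kan extensions that exist. Given a left Kan extension $(\psi/h,\xi)$ and an adjunction $l\dashv r$, the universal property of $(l\hcomp(\psi/h),\, l\hcomp\xi)$ as a left Kan extension of $l\psi$ along $h$ is obtained by transposing a test $2$-cell $l\psi\Rightarrow k\hcomp h$ across $l\dashv r$, applying the universal property of $(\psi/h,\xi)$, and transposing back. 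The second fact is purely formal: preservation of a fixed left Kan extension is invariant under isomorphism of the preserving $1$-cell and is closed under composition.

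First I would settle the object level. Fix $h\colon S\to S'$ in $\maps$ and $\phi\colon S\to X$. Since $A\in\LInj(\maps)$, the extension $(r_1\phi)/h$ exists in $A$ with invertible unit $\xi$; as $l_1$ is a left adjoint it preserves this extension, whence $l_1\hcomp\big((r_1\phi)/h\big)\cong (l_1r_1\phi)/h\cong \phi/h$, the last step using $l_1r_1\cong 1_X$. The induced unit is $l_1\hcomp\xi$ transported along $l_1r_1\phi\cong\phi$, hence invertible, so every $\phi$ admits a left Kan extension along $h$ with invertible unit. Thus $X\in\LInj(\maps)$, and the identical argument applied to $l_2$ and $B$ gives $Y\in\LInj(\maps)$.

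For the $1$-cell level I would show that $g$ preserves each extension $\phi/h\cong l_1\hcomp\big((r_1\phi)/h\big)$. The crux is to see that $g\hcomp l_1$ preserves $(r_1\phi)/h$: indeed $f$ preserves it because $f\in\LInj(\maps)$, and $l_2$ preserves the resulting extension of $f r_1\phi$ because it is a left adjoint, so by closure under composition $l_2\hcomp f$ preserves $(r_1\phi)/h$, and by invariance under $\theta\colon g\hcomp l_1\cong l_2\hcomp f$ so does $g\hcomp l_1$. Feeding $l_1r_1\phi\cong\phi$ through this preservation yields $(g\phi)/h\cong g\hcomp l_1\hcomp\big((r_1\phi)/h\big)\cong g\hcomp(\phi/h)$, which is exactly preservation of $\phi/h$ by $g$.

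The hard part will be to check that the isomorphisms assembled above are the canonical comparison $2$-cells, rather than merely abstract isomorphisms, so that $g$ genuinely preserves the extensions in the sense of the definition (equivalently, that the Beck--Chevalley $2$-cell of Remark~\ref{rem:Beck-Chev} is invertible). This is essentially bookkeeping: one chases the unit $l_1\hcomp\xi$ and the two counits through the transpose description of how left adjoints preserve Kan extensions, and since each intermediate comparison is by construction the canonical one, the composite is canonical as well. This completes the verification that $g\in\LInj(\maps)$.
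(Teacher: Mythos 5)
Your proof is correct and takes essentially the same route as the paper's: the object-level step transports $(r_1\phi)/h$ along the lali adjunction using the invertible counit (exactly the paper's $l\circ(rp)/h\cong(lrp)/h\cong p/h$), and your closure-under-composition-and-isomorphism packaging of the 1-cell step unwinds to the paper's chain $g\circ t/h\cong gl_1\circ(r_1t)/h\cong l_2f\circ(r_1t)/h\cong(l_2fr_1t)/h\cong(gl_1r_1t)/h\cong(gt)/h$. The canonical-comparison bookkeeping you flag at the end is treated at the same (implicit) level of detail in the paper itself.
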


\begin{proof} 
We first show that $X$ belongs to $\LInj(\mathcal{H})$. Given any $h\colon C\to C'$ in $\maps$ and any $p\colon C\to X$, we need to prove that there exists a Kan extension $p/h$ with an invertible universal 2-cell. Since $A$ is left Kan injective with respect to $h$ we can consider the following 2-cell, where  $l:=l_1\dashv r$   and $\epsilon$ is the counit of the adjunction (which is an isomorphism since $l$ is a lali).
\[\begin{tikzcd}
	C &&& {C'} \\
	& X \\
	&& A \\
	X
	\arrow["h", from=1-1, to=1-4]
	\arrow["p"{description}, from=1-1, to=2-2]
	\arrow["r"{description}, from=2-2, to=3-3]
	\arrow["p"', from=1-1, to=4-1]
	\arrow[""{name=0, anchor=center, inner sep=0}, Rightarrow, no head, from=2-2, to=4-1]
	\arrow["l", curve={height=-6pt}, from=3-3, to=4-1]
	\arrow[""{name=1, anchor=center, inner sep=0}, "{(rp)/h}", curve={height=-6pt}, from=1-4, to=3-3]
	\arrow["{\epsilon^{-1}}", shorten <=12pt, shorten >=12pt, Rightarrow, from=0, to=3-3]
	\arrow["{\xi^h_{rp}}", shorten <=14pt, shorten >=14pt, Rightarrow, from=2-2, to=1]
\end{tikzcd}\]
    The pasting diagram makes $l\circ (rp)/h$ a left Kan extension of $p$ along $h$ (with universal 2-cell invertible, since $\xi^h_{rp}$ is so):
    \begin{center}
    \begin{tabular}{rlr}
         $l\circ (rp)/h$
         & $\cong (lrp)/h$
         & (since left adjoints preserves left Kan extensions)\\
         & $\cong p/h$
         & (since $l$ lali, so $lr\cong1$).
    \end{tabular}
    \end{center}

Let us now consider the pseudocommutative square (with $l_i\dashv r_i$ for $i=1,2$)
\[\begin{tikzcd}
	A & B \\
	X & {Y.}
	\arrow["f", from=1-1, to=1-2]
	\arrow["g"', from=2-1, to=2-2]
	\arrow[""{name=0, anchor=center, inner sep=0}, "{l_1}"', from=1-1, to=2-1]
	\arrow[""{name=1, anchor=center, inner sep=0}, "{l_2}", from=1-2, to=2-2]
	\arrow["\cong"{description}, Rightarrow, draw=none, from=0, to=1]
\end{tikzcd}\]
    By the first part we already know that $X$ and $Y$ are left Kan injective with respect to $\maps$. We have left to prove that $g$ preserves Kan extensions, i.e. for any $h\colon C\to C'$ in $\maps$ and any $t\colon C\to X$, then $g\circ t/h\cong(gt)/h$. Indeed,
    \begin{center}
    \begin{tabular}{rlr}
         $g\circ t/h$
         & $\cong g\circ l_1\circ (r_1t)/h$
         & (by first part applied to $X$)\\
         & $\cong l_2\circ f\circ (r_1t)/h$
         & (by the pseudocommutativity of the square)\\
         & $\cong l_2\circ  (fr_1t)/h$
         & (because $f\in\slinj(\maps)$)\\
         & $\cong (l_2fr_1t)/h$
         & (because left adjoints preserve Kan extension)\\
         & $\cong (gl_1r_1t)/h$
         & (by $gl_1\cong l_2f$)\\
         & $\cong (gt)/h$
         & (by $l_1r_1\cong1$).
    \end{tabular}
    \end{center}
\end{proof}

\begin{theorem}{\em (\cite{carv-sousa_order-refl}, \cite{marm-wood_lax-idemp}, see also \cite{kock1977doctrines} and \cite{BungeFunk1999})}\label{thm:units}
The 2-category of pseudoalgebras and homomorphisms of a KZ-pseudomonad is, up to 2-equivalence, the sub-2-category $\LInj(\mathcal{U})$  where $\mathcal{U}$ is made of all components of the unit of the pseudomonad.
\end{theorem}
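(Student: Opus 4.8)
The plan is to exhibit $\LInj(\mathcal{U})$ itself as the right part of a KZ-adjunction inducing $\mathbb{D}$, and then to quote Corollary~\ref{cor:Linj(H)}. Write $\mathbb{D}=(D,d,m)$ and $\mathcal{U}=\{d_X\colon X\to DX\mid X\in\catk\}$. By Theorem~\ref{thm:marm-wood}(2), $\mathbb{D}$ is reconstructed from the left Kan pseudomonad data carried by its unit: for every $f\colon X\to DY$ the left Kan extension $f/d_X$ exists with invertible unit, and every $d_X$ is dense. I would then invoke Theorem~\ref{thm:marm-wood}(1) with the choice $\cata:=\LInj(\mathcal{U})$; once its hypotheses are checked (below) this makes $\LInj(\mathcal{U})\hookrightarrow\catk$ the right part of a KZ-adjunction. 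The endofunctor, unit and multiplication produced in that proof depend only on the family $(d_X)$ and the chosen extensions $f/d_X$, which here are precisely those of $\mathbb{D}$; hence the induced KZ-pseudomonad is again $\mathbb{D}$. Corollary~\ref{cor:Linj(H)} then identifies its $2$-category of pseudoalgebras with $\LInj(\mathcal{U})$ up to $2$-equivalence, which is the assertion.

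Verifying the hypotheses of Theorem~\ref{thm:marm-wood}(1) for $\cata=\LInj(\mathcal{U})$ splits into routine and substantive parts. Local fullness is immediate, and local repleteness holds because both membership of objects and preservation of Kan extensions by $1$-cells are invariant under invertible $2$-cells; the inclusion $\cata\subseteq\LInj(\mathcal{U})$ is an equality, and condition (b), density of each $d_X$, is part of the data. For the first half of condition (a) I would note that every free object $DX$ lies in $\LInj(\mathcal{U})$: for any $d_Z\colon Z\to DZ$ and any $g\colon Z\to DX$ the extension $g/d_Z$ is supplied, with invertible unit, by the left Kan pseudomonad structure precisely because the target $DX$ is free.

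The substantive point, and the main obstacle, is the second half of condition (a): for $f\colon X\to A$ with $A\in\LInj(\mathcal{U})$, the extension $b:=f/d_X\colon DX\to A$ must be a $1$-cell of $\LInj(\mathcal{U})$. Its endpoints already lie there, so only preservation of Kan extensions needs proof, i.e. a canonical invertible comparison $b\circ(g/d_Z)\cong(b\circ g)/d_Z$ for every $g\colon Z\to DX$, equivalently invertibility of the Beck--Chevalley $2$-cell of Remark~\ref{rem:Beck-Chev}. For free targets this is exactly the left Kan pseudomonad equation $(f^{\mathbb{D}}\circ g)^{\mathbb{D}}\cong f^{\mathbb{D}}\circ g^{\mathbb{D}}$, and I expect the general case to follow the same pasting argument as in Marmolejo--Wood: since $A\in\LInj(\mathcal{U})$ it supplies an extension $a:=1_A/d_A$ with invertible unit $a\circ d_A\cong 1_A$, and one reduces the comparison for $b$ to the one for the free map $Df$ by means of $a$, pseudonaturality of $d$, and the invertible units of $g/d_Z$ and $b$, with density of $d_Z$ forcing the comparison to be invertible. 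This preservation statement, rather than any of the object-level checks, is where the real work lies; once it is in place, the reduction of the first paragraph delivers the theorem.
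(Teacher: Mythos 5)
Your proposal has a genuine circularity, and it sits precisely at the step you treat as a formality. Your final move quotes Corollary~\ref{cor:Linj(H)} to pass from ``$\LInj(\mathcal{U})\hookrightarrow\catk$ is the right part of a KZ-adjunction inducing $\mathbb{D}$'' to ``the pseudoalgebras of $\mathbb{D}$ are $2$-equivalent to $\LInj(\mathcal{U})$''. But in the paper Corollary~\ref{cor:Linj(H)} is a \emph{consequence} of Theorem~\ref{thm:units}: its proof begins by invoking Theorems~\ref{thm:marm-wood} and~\ref{thm:units}, and the instance you need (namely $\maps=\mathcal{U}$) is essentially Theorem~\ref{thm:units} itself. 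Nor can the corollary be granted for free: a biadjunction by itself never identifies the domain of its right part with the Eilenberg--Moore $2$-category of the induced pseudomonad --- that identification is a (pseudo)monadicity statement, and for KZ-pseudomonads it is exactly the content supplied by Marmolejo--Wood. This is how the paper actually proves the theorem: it quotes their $2$-equivalence between the pseudoalgebras and the $2$-category $\mathbb{D}$-$\overline{\text{Alg}}$ of left-Kan-pseudomonad algebras, rephrases the latter in Kan-injectivity terms (objects: $X\in\LInj(\mathcal{U})$ such that every $f/d_A$ lies in $\LInj(\mathcal{U})$; morphisms: exactly those of $\LInj(\mathcal{U})$), and then shows the extra condition on objects is redundant. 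Your route never supplies this monadicity input, so the theorem is not proved.

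There is a second, smaller circle inside what you rightly single out as the substantive point (condition (a) of Theorem~\ref{thm:marm-wood} for $\cata=\LInj(\mathcal{U})$). Your reduction of the comparison for $b=f/d_X$ to the free map $Df$ via $a=1_A/d_A$ tacitly requires $a$ itself to preserve Kan extensions along the units, i.e.\ to be a $1$-cell of $\LInj(\mathcal{U})$ --- but that is an instance of the very statement being proven (take $f=1_A$). The paper breaks this circle with Lemma~\ref{lem:left-inj-vs-lali}: one first shows $1_A/d_A\dashv d_A$ with invertible counit (the counit is the inverse of the universal $2$-cell, invertible by Kan injectivity of $A$; the unit comes from density of $d_A$), so $1_A/d_A$ is a lali; closure of $\LInj(\mathcal{U})$ under lalis then makes it a $1$-cell of $\LInj(\mathcal{U})$, and only afterwards does the factorization $f/d_X\cong (1_A/d_A)\circ\bigl((d_Af)/d_X\bigr)$ conclude, since $(d_Af)/d_X$ is a map between free algebras and hence in $\LInj(\mathcal{U})$ by Theorem~\ref{thm:marm-wood}. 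If you want to salvage your outline, you must (i) replace the appeal to Corollary~\ref{cor:Linj(H)} by a direct use of the Marmolejo--Wood description of the pseudoalgebras, and (ii) insert the lali argument to make your reduction non-circular; at that point your proof becomes the paper's.
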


\begin{proof}  For order-enriched categories, this was proven in \cite{carv-sousa_order-refl}. For the general context it follows from \cite{marm-wood_lax-idemp}, by combining the description made by Marmolejo and Wood, in Section 3 of that paper, of the 2-category of algebras $\mathbb{D}$-$\overline{\text{Alg}}$ for $\mathbb{D}$ a left Kan pseudomonad, and the fact, given by them in Section 5, Theorem 5.1, that it is, up 2-equivalence, the 2-category of algebras of the \textit{lax}-idempotent pseudomonad determined by $\mathbb{D}$. {\blue Indeed, we can rephrase the description of $\mathbb{D}$-$\overline{\text{Alg}}$ given in  \cite[Section 3]{marm-wood_lax-idemp} (after Remark 3.2), using left Kan extensions instead of right ones, as follows:
\begin{enumerate}
\item[(i)] the objects of $\mathbb{D}$-$\overline{\text{Alg}}$  are all $X$ in $\cal K$ which are left Kan injective with respect to $\cal U$ and such that, for every $A$ and every $f\colon A\to X$, the 1-cell data $f/d_A$ of the Kan extension belongs to $\LInj(\mathcal{U})$;
    \item[(ii)] the morphisms of $\mathbb{D}$-$\overline{\text{Alg}}$ are precisely those in $\LInj(\mathcal{U})$.
    \end{enumerate}

   Thus, we just need to show that, in (i), the condition that $f/d_A$ belongs to $\LInj(\mathcal{U})$ is redundant.}%

   We start proving that, given $X\in\slinj(\{d_X\mid X\in \catk\})$, then $1_X/d_X\colon DX\to X$ is a lali, in particular $1_X/d_X\dashv d_X$. We set  $\epsilon$ as the inverse of the universal 2-cell
\begin{center}
    $\epsilon^{-1}:=$
\begin{tikzcd}[ampersand replacement=\&]
	X \& {DX} \\
	X
	\arrow[""{name=0, anchor=center, inner sep=0}, Rightarrow, no head, from=1-1, to=2-1]
	\arrow["d_X", from=1-1, to=1-2]
	\arrow[""{name=1, anchor=center, inner sep=0}, "{1_X/d_X}", from=1-2, to=2-1]
	\arrow["\cong"{xshift=-0.15cm,yshift=0.025cm}, shorten <=5pt, shorten >=5pt, Rightarrow, from=0, to=1]
\end{tikzcd}
\end{center}
Moreover, we can define $\eta\colon 1_{DX}\Rightarrow d_X\circ 1_X/d_X$ using that $1_{DX}$ is a Kan extension (since $d_X$ is dense). More precisely, we define $\eta$ as the 2-cell corresponding to the 2-cell $\eta'$ defined below.
\begin{center}
\begin{tikzcd}[ampersand replacement=\&]
	X \& {DX} \\
	{DX}
	\arrow["d_X", from=1-1, to=1-2]
	\arrow[""{name=0, anchor=center, inner sep=0}, "d_X"', from=1-1, to=2-1]
	\arrow[""{name=1, anchor=center, inner sep=0}, "{d_X\circ1/d_X}", from=1-2, to=2-1]
	\arrow["{\eta'}"{xshift=-0.075,yshift=0.075}, shorten <=6pt, shorten >=6pt, Rightarrow, from=0, to=1]
\end{tikzcd} $:=$
\begin{tikzcd}[ampersand replacement=\&]
	X \& {DX} \\
	\& X \& {DX}
	\arrow["{d_X}", from=1-1, to=1-2]
	\arrow[""{name=0, anchor=center, inner sep=0}, "{1/d_X}", from=1-2, to=2-2]
	\arrow["{d_X}", from=2-2, to=2-3]
	\arrow[""{name=1, anchor=center, inner sep=0}, curve={height=6pt}, Rightarrow, no head, from=1-1, to=2-2]
	\arrow["{\epsilon^{-1}}"{xshift=0.15cm,yshift=0.075cm}, shorten <=5pt, shorten >=5pt, Rightarrow, from=1, to=0]
\end{tikzcd}
\end{center}
One triangle identity follows directly from the definitions of $\epsilon$ and $\eta$ and the second one from the (2-dimensional) universal property of $1_X/d_X$. Then, since  $DX\in\slinj(\cal U)$, by Lemma \ref{lem:left-inj-vs-lali} we get that also $X\in\slinj(\cal U)$ and $1_X/d_X$ is a morphism of $\slinj(\cal U)$.

{\blue Finally, let us consider $f\colon A\to X$ and show that $f/d_X$ belongs to $\slinj(\cal U)$. Indeed, since $1_X/d_X$ does, we have that 
$$f/d_A\cong (1_Xf)/d_A\cong (1_X/d_X \circ d_X\circ f)/d_A\cong (1_X/d_X)\circ(\,(d_Xf)/d_A\,).$$ By Theorem \ref{thm:marm-wood}, we know that $(d_Xf)/d_A$ lies in $\slinj(\cal U)$, thus, being isomorphic to the composition of two morphisms in $\slinj(\cal U)$, $f/d_A$ is also in $\slinj(\cal U)$.
}%
\end{proof}

We have just seen that the 2-category of pseudoalgebras of a KZ-pseudomonad is essentially a Kan injective sub-2-category of $\catk$. A natural question is: When is a Kan injective sub-2-category 2-equivalent to the 2-category of pseudoalgebras for a KZ-pseudomonad? For ordinary categories this reduces to the famous Orthogonal Subcategory Problem (introduced in \cite{freyd1972categories}) asking when is an orthogonal subcategory the category of algebras of an idempotent monad. For order-enriched categories, an answer of the \textit{Kan Injective Subcategory Problem} was given in \cite{adamek2015kan}. The next two sections are dedicated to give an answer in the general 2-dimensional context.

We end this section by showing that a Kan injective sub-2-category of $\catk$ whose inclusion into $\catk$ is the right part of a KZ-adjunction is always KZ-monadic, that is, the 2-category of pseudoalgebras of the corresponding KZ-pseudomonad, up to 2-equivalence.

\begin{corollary}\label{cor:Linj(H)}For every class of 1-cells $\mathcal{H}$, if the inclusion $\slinj(\mathcal{H}) \hookrightarrow \catk$ is the right part of a KZ-adjunction, then the 2-category of pseudoalgebras of the corresponding KZ-pseudomonad is 2-equivalent to $\slinj(\mathcal{H})$.
\end{corollary}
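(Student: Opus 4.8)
The plan is to reduce everything to Theorem~\ref{thm:units}. The given KZ-adjunction induces a KZ-pseudomonad $\mathbb{D}$ with unit $d\colon \mathrm{Id}_{\catk}\to D$; write $\mathcal{U}=\{d_X\colon X\to DX\mid X\in\catk\}$. By Theorem~\ref{thm:units} the 2-category of pseudoalgebras of $\mathbb{D}$ is 2-equivalent to $\LInj(\mathcal{U})$, so it suffices to prove the equality of locally full sub-2-categories $\slinj(\maps)=\LInj(\mathcal{U})$ inside $\catk$.

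For the inclusion $\slinj(\maps)\subseteq\LInj(\mathcal{U})$ I would argue directly from the reflection. Since the inclusion is the right part of a KZ-adjunction, each $DX$ lies in $\slinj(\maps)$ and, for $Y\in\slinj(\maps)$, precomposition with $d_X$ is an equivalence $\slinj(\maps)(DX,Y)\simeq\catk(X,Y)$. Thus every $f\colon X\to Y$ admits an extension $\bar f\colon DX\to Y$ in $\slinj(\maps)$ with $\bar f\circ d_X\cong f$, unique up to isomorphism; by the KZ-structure this $\bar f$ is exactly the left Kan extension $f/d_X$, with invertible unit, so $Y\in\LInj(\mathcal{U})$. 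A 1-cell $p$ of $\slinj(\maps)$ preserves these extensions because both $p\circ(f/d_X)$ and $(pf)/d_X$ are reflections of $pf$ and hence isomorphic; local fullness then settles the 2-cells.

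The hard part is the reverse inclusion $\LInj(\mathcal{U})\subseteq\slinj(\maps)$, and the tool is Lemma~\ref{lem:left-inj-vs-lali}. I would treat objects first. For $Y\in\LInj(\mathcal{U})$, Kan injectivity with respect to $d_Y$ produces, exactly as in the proof of Theorem~\ref{thm:units}, a lali $a_Y:=1_Y/d_Y\dashv d_Y$ with invertible counit $a_Y\circ d_Y\cong 1_Y$. Since $DY\in\slinj(\maps)$, applying Lemma~\ref{lem:left-inj-vs-lali} to the pseudocommutative square with top edge $1_{DY}\in\slinj(\maps)$ and both vertical edges equal to the lali $a_Y$ yields $1_Y\in\slinj(\maps)$, hence $Y\in\slinj(\maps)$. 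For a 1-cell $p\colon Y\to Y'$ of $\LInj(\mathcal{U})$, the objects $Y,Y'$ are now known to lie in $\slinj(\maps)$. Using $a_Y\circ d_Y\cong 1_Y$ and the naturality isomorphism $Dp\circ d_Y\cong d_{Y'}\circ p$, both $p\circ a_Y$ and $a_{Y'}\circ Dp$ restrict along $d_Y$ to $p$; since $Y'\in\slinj(\maps)$, the density of $d_Y$ (condition (b$'$) of Remark~\ref{rem:dense}) forces $p\circ a_Y\cong a_{Y'}\circ Dp$. With $Dp\in\slinj(\maps)$ and $a_Y,a_{Y'}$ lalis, Lemma~\ref{lem:left-inj-vs-lali} gives $p\in\slinj(\maps)$.

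Putting the two inclusions together gives $\slinj(\maps)=\LInj(\mathcal{U})$ as sub-2-categories, and combining with Theorem~\ref{thm:units} shows that $\slinj(\maps)$ is 2-equivalent to the 2-category of pseudoalgebras of $\mathbb{D}$. The single genuine obstacle I anticipate is the morphism half of the reverse inclusion: one must produce the coherent isomorphism $p\circ a_Y\cong a_{Y'}\circ Dp$ and verify that it meets the hypotheses of Lemma~\ref{lem:left-inj-vs-lali}; everything else is the reflection universal property together with the lali bookkeeping already performed in the proof of Theorem~\ref{thm:units}.
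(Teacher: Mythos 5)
Your proposal follows the paper's own skeleton: reduce via Theorem~\ref{thm:units} to showing $\slinj(\maps)=\slinj(\mathcal{U})$ for $\mathcal{U}=\{d_X\mid X\in\catk\}$ (the paper compresses the easy inclusion into a citation of Theorems~\ref{thm:marm-wood} and~\ref{thm:units}), and then prove $\slinj(\mathcal{U})\subseteq\slinj(\maps)$ by combining the lali $1_X/d_X\dashv d_X$ from the proof of Theorem~\ref{thm:units} with Lemma~\ref{lem:left-inj-vs-lali}. Your object-level argument is exactly the paper's.

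There is, however, a genuine flaw in your justification of the pseudocommutative square in the 1-cell part. You claim that since $p\circ a_Y$ and $a_{Y'}\circ Dp$ both restrict along $d_Y$ to $p$, condition (b$'$) of Remark~\ref{rem:dense} forces them to be isomorphic. But (b$'$), i.e.\ $(fd_X)/d_X\cong f$, is only available for $f$ a 1-cell of $\cata=\slinj(\maps)$: an arbitrary extension of $p$ along $d_Y$ need not be the Kan extension, so agreement after restriction gives an isomorphism only when both extensions are known to lie in $\slinj(\maps)$. That holds for $a_{Y'}\circ Dp$, but not (yet) for $p\circ a_Y$ --- whether $p\circ a_Y$ behaves like a 1-cell of $\slinj(\maps)$ is essentially what you are trying to prove, so the appeal is circular. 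The repair uses tools you already have: $p\circ a_Y=p\circ(1_Y/d_Y)\cong p/d_Y$ because $p$, being a 1-cell of $\LInj(\mathcal{U})$, preserves Kan extensions along $d_Y$; and $a_{Y'}\circ Dp\cong a_{Y'}\circ\bigl((d_{Y'}p)/d_Y\bigr)\cong(a_{Y'}d_{Y'}p)/d_Y\cong p/d_Y$ because the lali $a_{Y'}$ is a left adjoint and left adjoints preserve left Kan extensions. Note that the paper avoids this double bookkeeping by choosing a simpler square: top edge $u/d_X\in\slinj(\maps)$, left vertical the lali $1_X/d_X$, right vertical the identity $1_Y$; its pseudocommutativity $u\circ(1_X/d_X)\cong u/d_X$ is precisely the invertibility of the mate of the unit isomorphism, which holds because $u\in\slinj(\mathcal{U})$ preserves the Kan extension $1_X/d_X$. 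With that one step repaired (or replaced by the paper's square), your argument goes through.
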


\begin{proof}
By Theorems \ref{thm:marm-wood} and \ref{thm:units}, and using their notation, we have just to prove that $\slinj(\{d_X\mid X\in \catk\})$ is contained in $\slinj(\mathcal{H})$.
{\blue For every object $X$ in $\slinj(\{d_X\})$, the morphism $1_X/d_X\colon DX\to X$ is a lali, as shown in the proof of Theorem \ref{thm:units}. By Lemma~\ref{lem:left-inj-vs-lali}, since $DX$ belongs $\slinj(\mathcal{H})$, we get that also $X$ belongs to $\slinj(\mathcal{H})$.
}%
Moreover, given $u\colon X\to Y$ in $\slinj(\lbrace d_X\rbrace)$, we can consider the diagrams
\[\begin{tikzcd}
	X & Y && {DX} & Y \\
	{DX} & Y && X & Y
	\arrow[""{name=0, anchor=center, inner sep=0}, Rightarrow, no head, from=1-2, to=2-2]
	\arrow[""{name=1, anchor=center, inner sep=0}, "{d_X}"', from=1-1, to=2-1]
	\arrow["u", from=1-1, to=1-2]
	\arrow["{u/d_X}"', from=2-1, to=2-2]
	\arrow["{u/d_X}", from=1-4, to=1-5]
	\arrow[""{name=2, anchor=center, inner sep=0}, "{1_X/d_X}"', from=1-4, to=2-4]
	\arrow["u"', from=2-4, to=2-5]
	\arrow[""{name=3, anchor=center, inner sep=0}, Rightarrow, no head, from=1-5, to=2-5]
	\arrow["\cong"{description}, Rightarrow, draw=none, from=1, to=0]
	\arrow["\cong"{description}, Rightarrow, draw=none, from=2, to=3]
\end{tikzcd}\]
which are mates. Then, since $u/d_X\in\slinj(\maps)$, using again Lemma \ref{lem:left-inj-vs-lali},  we get that also $u\in\slinj(\maps)$.
\end{proof}

\begin{remark}For a KZ-pseudomonad, let $\mathcal{U}$ be the class of the units. Between the sub-2-category of all pseudoalgebras and its full sub-2-category  consisting of all free algebras  we may encounter several relevant sub-2-categories. This is the topic of the paper \cite{hofmann-sousa_aaa}, dealing with the order-enriched context.
\end{remark}

\section{The pseudochain construction} \label{chain}

The transfinite chain described here  is a $2$-dimensional enhancement of the \textit{orthogonal reflection construction} \cite[1.37]{adamek_rosicky_1994}. The $\mathsf{Pos}$-enriched version analogue of this chain was presented in \cite[Construction 5.2]{adamek2015kan}.

The archetype of a transfinite construction of this kind is the one of Quillen's Small Object Argument. A deep general study on transfinite constructions of free algebras on ordinary categories was made in \cite{kelly1980}.  In the transfinite construction of \cite{adamek2015kan}, besides the conical colimits used in the ordinary case, coinserters were applied. Here, we use  a new  ingredient, named coequinserter, whose definition (in its strict version) is given next. It is a special 2-colimit which may be obtained as the composition of a coinserter with a coequifier.

\begin{definition}Given a 2-cell
\[\adjustbox{scale=0.9}{\begin{tikzcd}
	& B \\
	A && C, \\
	& B
	\arrow["h", from=2-1, to=1-2]
	\arrow["h"', from=2-1, to=3-2]
	\arrow["f", from=1-2, to=2-3]
	\arrow["g"', from=3-2, to=2-3]
	\arrow["\gamma", shorten <=15pt, shorten >=15pt, Rightarrow, from=1-2, to=3-2]
\end{tikzcd}}\]
a \textbf{coequinserter} of $\gamma$ consists of a 1-cell $i\colon C\to Q$ and a 2-cell
\[\adjustbox{scale=0.9}{\begin{tikzcd}
	& C \\
	B && Q \\
	& C
	\arrow["f", from=2-1, to=1-2]
	\arrow["g"', from=2-1, to=3-2]
	\arrow["i", from=1-2, to=2-3]
	\arrow["i"', from=3-2, to=2-3]
	\arrow["\phi", shorten <=15pt, shorten >=15pt, Rightarrow, from=1-2, to=3-2]
\end{tikzcd}}\]
such that
\[\adjustbox{scale=0.9}{\begin{tikzcd}
	& B &&&&&& C \\
	A && C & Q & {=} & A & B && Q \\
	& B &&&&&& C
	\arrow["f", from=2-7, to=1-8]
	\arrow["g"', from=2-7, to=3-8]
	\arrow["i", from=1-8, to=2-9]
	\arrow["i"', from=3-8, to=2-9]
	\arrow["\phi", shorten <=10pt, shorten >=10pt, Rightarrow, from=1-8, to=3-8]
	\arrow["h", from=2-6, to=2-7]
	\arrow["i", from=2-3, to=2-4]
	\arrow["h", from=2-1, to=1-2]
	\arrow["h"', from=2-1, to=3-2]
	\arrow["f", from=1-2, to=2-3]
	\arrow["g"', from=3-2, to=2-3]
	\arrow["\gamma", shorten <=15pt, shorten >=15pt, Rightarrow, from=1-2, to=3-2]
\end{tikzcd}}\]
with the following universal properties:
\begin{enumerate}
    \item[(1)] For any other 1-cell $u\colon C\to R$ and 2-cell $\epsilon\colon uf\Rightarrow ug$ such that $u\gamma=\epsilon h$, there exists a unique $t\colon Q\to R$ such that $ti=u$ and $t\phi=\epsilon$.

    \item[(2)] For any pair of 1-cells $u,v\colon Q\to R$ and 2-cell $\theta\colon ui\Rightarrow vi$ such that
\[\adjustbox{scale=0.9}{\begin{tikzcd}
	& C &&&&& C & Q \\
	B && Q & R & {=} & B && Q & R, \\
	& C & Q &&&& C
	\arrow["i"{description}, from=3-2, to=2-3]
	\arrow["i"', from=3-2, to=3-3]
	\arrow["u", from=2-3, to=2-4]
	\arrow["v"', from=3-3, to=2-4]
	\arrow["g"', from=2-1, to=3-2]
	\arrow["f", from=2-1, to=1-2]
	\arrow["i", from=1-2, to=2-3]
	\arrow["\phi"{xshift=0.1cm}, shorten <=15pt, shorten >=15pt, Rightarrow, from=1-2, to=3-2]
	\arrow["\theta"{xshift=0.1cm}, shorten <=3pt, shorten >=3pt, Rightarrow, from=2-3, to=3-3]
	\arrow["f", from=2-6, to=1-7]
	\arrow["g"', from=2-6, to=3-7]
	\arrow["i"{description}, from=1-7, to=2-8]
	\arrow["i"', from=3-7, to=2-8]
	\arrow["\phi"{xshift=0.1cm}, shorten <=15pt, shorten >=15pt, Rightarrow, from=1-7, to=3-7]
	\arrow["i", from=1-7, to=1-8]
	\arrow["u", from=1-8, to=2-9]
	\arrow["v"', from=2-8, to=2-9]
	\arrow["\theta"{xshift=0.1cm}, shorten <=3pt, shorten >=3pt, Rightarrow, from=1-8, to=2-8]
\end{tikzcd}}\]
    then there exists a unique 2-cell $\overline{\theta}\colon u\Rightarrow v$ with $\overline{\theta}i=\theta$.
\end{enumerate}
\end{definition}

\begin{remark}[Coequinserters from coinserters and coequifiers]
In a 2-category with (bi)coinserters and (bi)coequifiers, we can construct a (bi)coequinserter as follows. First, we consider the (bi)coinserter of $f,g\colon B\to C$,
\[\adjustbox{scale=0.9}{\begin{tikzcd}
	& C \\
	B && D. \\
	& C
	\arrow["f", from=2-1, to=1-2]
	\arrow["g"', from=2-1, to=3-2]
	\arrow["e", from=1-2, to=2-3]
	\arrow["e"', from=3-2, to=2-3]
	\arrow["\chi"{xshift=0.1cm}, shorten <=15pt, shorten >=15pt, Rightarrow, from=1-2, to=3-2]
\end{tikzcd}}\]
Then, let $q\colon C\to Q$ be the (bi)coequifier  of $\chi\circ h$ and $e\circ \gamma$:
\[\begin{tikzcd}
	& B & C \\
	A &&& D. \\
	& B & C
	\arrow["g"', from=3-2, to=3-3]
	\arrow["e", from=1-3, to=2-4]
	\arrow["{e\circ \gamma}"{xshift=0.1cm}, shorten <=15pt, shorten >=15pt, Rightarrow, from=1-3, to=3-3]
	\arrow["f", from=1-2, to=1-3]
	\arrow["e"', from=3-3, to=2-4]
	\arrow["h", from=2-1, to=1-2]
	\arrow["h"', from=2-1, to=3-2]
	\arrow["{\chi\circ h}"'{xshift=-0.1cm}, shorten <=15pt, shorten >=15pt, Rightarrow, from=1-2, to=3-2]
\end{tikzcd}\]
    One can check that the (bi)coequinserter is given by the 1-cell $qe\colon C\to Q$ and the 2-cell $q\circ\chi\colon (qe)f\Rightarrow (qe)g$.
\end{remark}

    \begin{notat}[Pseudochains] \label{pseudochain}
    For any ordinal $i$, let $\mathbf{i}$ be the ordered set of  all ordinals $j< i$ looked as a locally discrete 2-category. By an \textit{$i$-pseudochain} in a 2-category $\catk$ we mean a normal pseudofunctor
    $$X\colon \mathbf{i}\to \catk\,.$$
    We denote $X(j\leq k)$ by ${x_{j,k}}\colon X_j\rightarrow X_k$; in particular, by the normality assumption,  $x_{jj} = 1_{X_j}$. For any $j\leq k\leq l$, we denote with 
\[\begin{tikzcd}[ampersand replacement=\&]
	\& {X_k} \\
	{X_j} \&\& {X_l}
	\arrow["{x_{j,k}}", from=2-1, to=1-2]
	\arrow["{x_{k,l}}", from=1-2, to=2-3]
	\arrow[""{name=0, anchor=center, inner sep=0}, "{x_{j,l}}"', from=2-1, to=2-3]
	\arrow["{\compX^{k}_{j,l}}", shorten <=6pt, shorten >=6pt, Rightarrow, from=1-2, to=0]
\end{tikzcd}\]
    the compositor isomorphism given by the pseudofunctoriality of $X$\blue; in particular, by the normality assumption,  $\mathbf{x}^j_{jk} = 1_{x_{j,k}}=\mathbf{x}^k_{jk}$.\black 

    \black 

  \blue  When clear from the context, we might omit super/subscripts and write $x\colon X_j\to X_k$ for $X(j\leq k)$ and $\compX=\compX^{k}_{j,l}\colon x_{k,l}x_{j,k} \Rightarrow x_{j,l}$ for the compositor isomorphisms.

For any $j\leq k_1\leq\ldots\leq k_n\leq l$, the axioms of a pseudofunctor ensure that all possible pasting of compositor isomorphisms are equal. We will denote any of these with the following notation. 
\[\begin{tikzcd}[ampersand replacement=\&]
	\& {X_{k_1} } \& \ldots \& {X_{k_n}} \\
	{X_j} \&\&\&\& {X_l}
	\arrow["{x_{j,k_1}}", from=2-1, to=1-2]
	\arrow[""{name=0, anchor=center, inner sep=0}, "{x_{j,l}}"', from=2-1, to=2-5]
	\arrow["{x_{k_n,l}}", from=1-4, to=2-5]
	\arrow["{x_{k_1,k_2}}", from=1-2, to=1-3]
	\arrow["{x_{k_{n-1},k_n}}", from=1-3, to=1-4]
	\arrow["{\mathbf{x}^{k_1,\ldots,k_n}_{j,l}=\mathbf{x}}", shorten <=6pt, shorten >=6pt, Rightarrow, from=1-3, to=0]
\end{tikzcd}\] \black

     Analogously, we may consider a pseudochain indexed by all ordinals, considering a pseudofunctor from the category $\mathbf{Ord}$.
    \end{notat}

    In a 2-category $\catk$ with (weighted) bicolimits, given a set of 1-cells $\maps$, we are going to construct, for every object $X\in \catk$, a pseudochain which will allow us (in Section \ref{TheTheorem}) to obtain the free pseudoalgebras of a KZ-pseudomonad induced by the inclusion $\LInj(\maps)\hookrightarrow \catk$.

\begin{constr}[The Kan injective pseudochain] \label{kansmallobject} Let $\mathcal{K}$ be a locally small $2$-category with small (weighted) bicolimits and let $\maps$ be a set of $1$-cells in $\mathcal{K}$. Given an object $X$ we construct a pseudochain  of objects $X_i$ ($i\in\ord$). \blue As in Notation \ref{pseudochain}, \black we denote the connecting maps by $x_{ji}\colon X_j\to X_i$  \blue and the compositor isomorphisms by $\compX_{jl}^k$, for all $j\leq k\leq i$. 
In each step $i$, we obtain an (i+1)-pseudochain which extends the previous (j+1)-pseudochains, $j<i$.\black

The first step is the given object $X_0:=X$.

\vskip1mm

\blue {\em Limit steps.} For $i$ a limit ordinal, $X_i$ is a bicolimit of the  $i$-pseudochain $(X_{j})_{j<i}$. The  connecting maps $x_{ji}\colon X_j\to X_i$ are the corresponding coprojections, and the 2-cells $\compX_{ji}^k$ are the isomorphisms given by the definition of bicolimit. \black

\vskip1mm

{\em Isolated steps.} Given $X_i$ with $i$ even, we define both $X_{i+1}$ and $X_{i+2}$. The idea is that the $i+1$ step approximates the 1-dimensional property of a Kan injective object and the $i+2$ step the 2-dimensional one.
\begin{enumerate}
    \item\label{kansmall:i+1} To define $X_{i+1}$ and the connecting map $x_{i,i+1}\colon X_i\to X_{i+1}$, consider \blue the diagram 
 \begin{equation}\label{diag:span}
 \begin{tikzpicture}
\node (a) at (0,2.5) {$A$};
\node (b) at (2,2.5) {$A'$};
\node (c) at (0,0) {$X_i$};
\node (a') at (1,1.5) {$\bullet$};
\node (b') at (3,1.5) {$\bullet$};
\draw[->] (a) to  node[scale=.7] (f) [above] {$h\in\maps$} (b);
\draw[dotted] (a) to (a');
\draw[->] (a') to  node[scale=.7] (f) [above] {$\in\maps$} (b');
\draw[->] (a) to  node[scale=.7] (f) [left] {$f\in\catk$} (c);
\draw[->] (a') to  node[scale=.7] (f) [right] {$\in\catk$} (c);
\end{tikzpicture}
\end{equation}
indexed by  all  spans $X_i\xleftarrow{f}A\xrightarrow{h}A'$ with $h\in \maps$ and $f$ arbitrary. 
  \black   We take the conical  bicolimit of this diagram. It may be obtained as a wide bipushout of all  bipushouts of $f$ along  $h$, each $(f,h)$  as in \eqref{diag:span}. 

    We set $x_{i,i+1}$ and $f//h$ the coprojections of the bicolimit, and the 1-cell $x_{i,i+1}$ is the required new connecting map in the pseudochain. \blue We denote by $\tilde{\xi}_f^h$ the corresponding isomorphisms:\black 
\begin{equation}\label{diag:w-bipu}
\begin{tikzcd}[ampersand replacement=\&]
	A \& {A'} \\
	{X_i} \& {X_{i+1}}
	\arrow[""{name=0, anchor=center, inner sep=0}, "f"', from=1-1, to=2-1]
	\arrow["h", from=1-1, to=1-2]
	\arrow[""{name=1, anchor=center, inner sep=0}, "{f//h}", from=1-2, to=2-2]
	\arrow["{x_{i,i+1}}"', from=2-1, to=2-2]
	\arrow["{\tilde{\xi}_f^h}", shorten <=13pt, shorten >=13pt, Rightarrow, from=0, to=1]
\end{tikzcd}
\end{equation}
\blue For every $j< k<i+1$, the new 2-cells $\compX_{j,i+1}^k$  are given by the composition of the identity on $x_{i,i+1}$ with $\compX_{j,i}^k$. In particular, $\mathbf{x}_{j,i+1}^i$ is the identity.\black

    \item\label{kansmall:i+2} Here we define
    $X_{i+2}$ and the connecting map $x_{i+1,i+2}\colon X_{i+1}\to X_{i+2}$. \blue The 2-dimensional property of a Kan injective object involves existence and uniqueness. Accordingly, we make use of two bicolimit constructions: bicoequinserters for the existence (in (a)) and bicoequifiers for the uniqueness (in (b)). Then, part (c) will put everything together. 
    \black
    
    \vskip2mm
    \begin{enumerate}
        \item[(a)] For every 2-cell $\gamma$ of the form
\[\begin{tikzcd}[ampersand replacement=\&]
	A \& {A'} \\
	{X_j} \& {X_{i+1}}
	\arrow["h", from=1-1, to=1-2]
	\arrow[""{name=0, anchor=center, inner sep=0}, "f"', from=1-1, to=2-1]
	\arrow[""{name=1, anchor=center, inner sep=0}, "g", from=1-2, to=2-2]
	\arrow["{x_{j,i+1}}"', from=2-1, to=2-2]
	\arrow["\gamma"{yshift=0.1cm}, shorten <=15pt, shorten >=15pt, Rightarrow, from=0, to=1]
\end{tikzcd}\]
with $h\in\maps$ and $j$ even, we consider the 2-cell
\[\begin{tikzcd}[ampersand replacement=\&]
	A \&\& {A'} \\
	{A'} \& {X_j} \&\& {X_{i+1}} \\
	\& {X_{j+1}}
	\arrow["h", from=1-1, to=1-3]
	\arrow[""{name=0, anchor=center, inner sep=0}, "f", from=1-1, to=2-2]
	\arrow[""{name=1, anchor=center, inner sep=0}, "g", from=1-3, to=2-4]
	\arrow["{x_{j,i+1}}"{description}, from=2-2, to=2-4]
	\arrow["{x_{j+1,i+1}}"', curve={height=12pt}, from=3-2, to=2-4]
	\arrow["h"', from=1-1, to=2-1]
	\arrow[""{name=2, anchor=center, inner sep=0}, "{x_{j,j+1}}"{description}, from=2-2, to=3-2]
	\arrow["{f//h}"', from=2-1, to=3-2]
	\arrow["{(\tilde{\xi}_f^h)^{-1}}"{pos=0.3}, shorten <=8pt, shorten >=8pt, Rightarrow, from=2-1, to=2-2]
	\arrow["{\compX^{j+1}_{j,i+1}}"'{xshift=-0.1cm}, shorten <=35pt, shorten >=35pt, Rightarrow, from=2, to=2-4]
	\arrow["\gamma", shorten <=40pt, shorten >=40pt, Rightarrow, from=0, to=1]
\end{tikzcd}\]
and its bicoequinserter  
$\xymatrix{X_{i+1}\ar[rr]^{c_{\gamma}}&&C_{\gamma}}$
with universal 2-cell
\[\adjustbox{scale=0.8}{\begin{tikzcd}[ampersand replacement=\&]
	\& {X_{j+1}} \&\& {X_{i+1}} \\
	{A'} \&\&\&\& {C_\gamma} \\
	\&\& {X_{i+1}}
	\arrow["g"', from=2-1, to=3-3]
	\arrow["{f//h}", from=2-1, to=1-2]
	\arrow[""{name=0, anchor=center, inner sep=0}, "{x_{j+1,i+1}}", from=1-2, to=1-4]
	\arrow["{c_\gamma}", from=1-4, to=2-5]
	\arrow["{c_\gamma}"', from=3-3, to=2-5]
	\arrow["{\chi_\gamma}"{xshift=0.1cm}, shorten <=17pt, shorten >=17pt, Rightarrow, from=0, to=3-3]
\end{tikzcd}}.\]

        \item[(b)]  \blue For every $\gamma=\{\sigma,\tau\}$\footnote{\blue We use here the same letter $\gamma$ that was used in the previous part for a 2-cell, although referring to a different situation. This will be useful in part (c) and in later proofs.} where $\sigma$ and $\tau$ are 2-cells as below such that $\sigma \circ h=\tau\circ h$,
\[\begin{tikzcd}[ampersand replacement=\&]
	\& {} \& {X_{j+1}} \& {} \\
	A \& {A'} \&\& {X_{i+1}} \\
	\& {} \& {} \& {}
	\arrow["{x_{j+1,i+1}}", curve={height=-6pt}, from=1-3, to=2-4]
	\arrow["{f//h}", curve={height=-6pt}, from=2-2, to=1-3]
	\arrow["g"{below}, curve={height=24pt}, from=2-2, to=2-4]
	\arrow["h", from=2-1, to=2-2]
	\arrow[draw=none, from=2-2, to=3-3]
	\arrow[draw=none, from=3-3, to=2-4]
	\arrow["\sigma"{xshift=0.1cm}, shift left=15, shorten <=18pt, shorten >=22pt, Rightarrow, from=1-2, to=3-2]
	\arrow["\tau"'{xshift=-0.1cm}, shift right=15, shorten <=18pt, shorten >=22pt, Rightarrow, from=1-4, to=3-4]
\end{tikzcd}\]
 we consider the bicoequifier of $\sigma$ and $\tau$ denoted with the 1-cell 

\centerline{$\xymatrix{X_{i+1}\ar[rr]^{c_{\gamma}}&&C_{\gamma}}$.}

        \item[(c)] 
\blue We define the morphism $x_{i+1,i+2}\colon X_{i+1}\to X_{i+2}$ through the wide bipushout of all $c_{\gamma}$ with $\gamma\in\Gamma$, where $\Gamma$ consists of all $\gamma$ described either in part (a) or (b). Hence $x_{i+1,i+2}$ comes equipped with canonical isomorphisms $\delta_\gamma$ as below, for any $\gamma\in\Gamma$. \black 
\begin{equation}\label{diag:w-bipu0}
\begin{tikzcd}[ampersand replacement=\&]
	{X_{i+1}} \& {C_\gamma} \\
	\& {X_{i+2}}
	\arrow["{c_\gamma}", from=1-1, to=1-2]
	\arrow["{d_\gamma}", from=1-2, to=2-2]
	\arrow[""{name=0, anchor=center, inner sep=0}, "{x_{i+1,i+2}}"', from=1-1, to=2-2]
	\arrow["{\delta_\gamma}", shift right=3, shorten <=6pt, shorten >=2pt, Rightarrow, from=0, to=1-2]
\end{tikzcd}
\end{equation}
{\blue Similarly to the previous isolated step, we define the new 2-cells $\compX_{j,i+2}^k$ by composition of $\mathbf{x}_{j,i+1}^k$ with the identity on $x_{i+1,i+2}$.}
\end{enumerate}

\end{enumerate}
    
\end{constr}



In the following lemma, which is going to be useful in the proof of Theorem \ref{thm:linj-kz-adj}, we show that, for every ordinal $i$, $x_{0i}$ belongs to the Kan injectivity saturation $\maps^{\text{sat}}$ (see Subsection \ref{subsec:saturation}).

\begin{lemma}\label{lem:ext-to-P}
Let $\maps$ be a set of 1-cells in a locally small 2-category with small bicolimits. In the Kan injective pseudochain, for every ordinal $i$,  the sub-$2$-category $\slinj(\maps)$ is left Kan injective with respect to $x_{0i}\colon X_0\to   X_i$, i.e.
$$\slinj(\maps)\subseteq \slinj(\lbrace x_{0i}\mid X \in \catk\rbrace).$$
This determines, for each $p_0\colon X_0\to P$ with $P\in \LInj(\maps)$, a pseudococone \blue given by 1-cells $p_i\colon X_i\to P$ and invertible 2-cells $\mathbf{p}_{i,j}\colon p_i\Rightarrow p_jx_{i,j}$ such that $(p_i, \mathbf{p}_{0i})$  is a Kan extension of $p_0$ along $x_{0i}$, i.e. with our notation \black 
$$p_i\cong p_0/x_{0i}.$$
\end{lemma}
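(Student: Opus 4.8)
The plan is to prove the statement by transfinite induction on $i$, showing that $x_{0i}\in\maps^{\sat}$; by the Galois connection of Subsection~\ref{subsec:saturation} this is exactly the assertion that $\slinj(\maps)=\slinj(\maps^{\sat})$ is left Kan injective (objects \emph{and} 1-cells) with respect to $x_{0i}$. Since the compositor $\compX^{i}_{0,k}$ exhibits $x_{0i}$ as isomorphic to $x_{k,i}\circ x_{0,k}$, and $\maps^{\sat}$ is closed under isomorphism and composition (Proposition~\ref{prop:saturation-irr}(2),(3)), for a successor step it suffices to prove that the \emph{new} connecting 1-cell lies in $\maps^{\sat}$ and then compose with the inductive hypothesis $x_{0,k}\in\maps^{\sat}$.

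The first isolated substep is easy: by Construction~\ref{kansmallobject}(\ref{kansmall:i+1}) the map $x_{i,i+1}$ is a wide bipushout of bipushouts of 1-cells $h\in\maps\subseteq\maps^{\sat}$, so it belongs to $\maps^{\sat}$ by Proposition~\ref{prop:saturation-irr}(5),(6). The second substep is the heart of the matter. By Construction~\ref{kansmallobject}(\ref{kansmall:i+2})(c), $x_{i+1,i+2}$ is a wide bipushout of the 1-cells $c_\gamma$, so, running the object- and morphism-wise argument of Proposition~\ref{prop:saturation-irr}(6) on the hom-categories, it is enough to show that every bicoequinserter and bicoequifier $c_\gamma$ lies in $\maps^{\sat}$. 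These two colimits are \emph{not} among the closure operations of Proposition~\ref{prop:saturation-irr}, so I would verify the defining property of $\maps^{\sat}$ by hand: for each $P\in\slinj(\maps)$ (and each 1-cell of $\slinj(\maps)$) the functor $\catk(c_\gamma,P)$ is a rali. Since $c_\gamma$ is a bicoequifier or bicoequinserter, $\catk(c_\gamma,P)$ is fully faithful, so being a rali is equivalent to being an equivalence; the content is therefore essential surjectivity. For a bicoequifier $\gamma=\{\sigma,\tau\}$ with $\sigma\circ h=\tau\circ h$ this means proving $u\sigma=u\tau$ for every $u\colon X_{i+1}\to P$. The mechanism I would use is that $P$ is genuinely left Kan injective along $h$: through the invertible 2-cell $\tilde{\xi}^{h}_{f}$ of \eqref{diag:w-bipu} and the compositors, the composite $u\,x_{j+1,i+1}(f//h)$ is identified with a left Kan extension along $h$ in $P$, and the two-dimensional uniqueness of that Kan extension turns the hypothesis $\sigma\circ h=\tau\circ h$ into $u\sigma=u\tau$. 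For a bicoequinserter the \emph{existence} part of the same universal property supplies the factorizing 2-cell, giving essential surjectivity, while uniqueness gives fullness and faithfulness.

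At a limit ordinal $i$, $X_i$ is the bicolimit of $(X_j)_{j<i}$, so $\catk(X_i,P)$ is the bilimit of the tower $\big(\catk(X_j,P)\big)_{j<i}$ with transition functors $\catk(x_{j,k},P)$. By the inductive hypothesis each $\catk(x_{0,j},P)$ is a rali, with fully faithful left adjoint $(-)/x_{0,j}$, and these left adjoints are mutually compatible because left Kan extension along a composite is iterated Kan extension, $p_0/x_{0,k}\cong(p_0/x_{0,j})/x_{j,k}$. Passing to the bilimit, the compatible left adjoints glue to a fully faithful left adjoint of the projection $\catk(x_{0,i},P)\colon\catk(X_i,P)\to\catk(X_0,P)$ whose unit remains invertible; hence $x_{0,i}\in\maps^{\sat}$, and the same computation on homs shows the 1-cells of $\slinj(\maps)$ preserve these extensions.

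The pseudococone is produced along the very same induction. Given $p_0\colon X_0\to P$, set $p_i:=p_0/x_{0i}$, which exists with invertible unit by the first part. For $i\le j$ the compositor $\compX^{i}_{0,j}$ and the composite law for Kan extensions give $p_j\cong(p_0/x_{0i})/x_{i,j}\cong p_i/x_{i,j}$, and the universal 2-cell of $p_i/x_{i,j}$ furnishes the invertible comparison $\mathbf{p}_{i,j}\colon p_i\Rightarrow p_j\,x_{i,j}$; the pseudococone coherences follow from the uniqueness in the Kan extensions together with the cocycle identities for the $\compX$'s. I expect the main obstacle to be the second substep: establishing, purely from the universal property of Kan extensions in $P$ and the invertibility of $\tilde{\xi}^{h}_{f}$, that bicoequinserters and bicoequifiers are inverted on hom-categories by every object \emph{and} every Kan-extension-preserving 1-cell of $\slinj(\maps)$, since these are the steps that fall outside the closure list of Proposition~\ref{prop:saturation-irr}.
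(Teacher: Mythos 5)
Your plan stands or falls on the claim that every bicoequinserter/bicoequifier map $c_\gamma$ (and hence $x_{i+1,i+2}$) lies in $\maps^{\sat}$, and that claim is false in general, not merely hard to verify — so there is a genuine gap precisely at the step you flag as "the heart of the matter". If $P$ were Kan injective with respect to a bicoequifier $c_\gamma$ for $\gamma=\{\sigma,\tau\}$, then every $u\colon X_{i+1}\to P$ would be isomorphic to a 1-cell factoring through $c_\gamma$, which forces $u\sigma=u\tau$ for \emph{every} $u$. Your proposed mechanism for this — that $u\,x_{j+1,i+1}(f//h)$ "is identified with a left Kan extension along $h$ in $P$" — is exactly where the argument breaks: the identification $p_{i+1}x_{j+1,i+1}(f//h)\cong (p_jf)/h$ of diagram \eqref{diag:xi_pif} is not a property of arbitrary 1-cells out of $X_{i+1}$; it is built into the inductive construction of the particular pseudococone components $p_j$ generated by a chosen $p_0\colon X_0\to P$. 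An arbitrary $u\colon X_{i+1}\to P$ sends the formal extension $f//h$ of diagram \eqref{diag:w-bipu} to an arbitrary "cocone" in $P$ with no universal property, so the two-dimensional uniqueness you want to invoke is simply not available. (A secondary problem in the same step: precomposition with a bicoequinserter coprojection is faithful but not full — only 2-cells compatible with $\chi_\gamma$ lift — so the reduction "rali $\Leftrightarrow$ equivalence via full faithfulness" also fails there.)

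A concrete counterexample: take $\catk=\Cat$ and $\maps=\{\emptyset\to 1\}$, so $\slinj(\maps)$ is categories with an initial object and initial-object-preserving functors. The first isolated step adjoins a formal initial object $0$ to $X_0$ (disjointly), and later even steps adjoin, via bicoequinserters, arrows out of $0$, and identify, via bicoequifiers, parallel arrows out of $0$. A functor $u$ from such a stage $X_{i+1}$ into a category $P$ with initial object may send $0$ to a non-initial object and two formally adjoined parallel arrows $\sigma,\tau$ out of $0$ to \emph{distinct} arrows of $P$; then $c_\gamma\sigma=c_\gamma\tau$ but $u\sigma\neq u\tau$, so $u$ does not factor through $c_\gamma$ even up to isomorphism, and $P$ is not Kan injective with respect to $c_\gamma$, nor with respect to $x_{i+1,i+2}$. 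This is exactly why the lemma (and the paper's proof) asserts saturation membership only for the composites $x_{0i}$: at the $i+2$ step the paper never shows $p_{i+2}\cong p_{i+1}/x_{i+1,i+2}$, but instead constructs $p_{i+2}$ from the cocone data ($\bar\gamma$, $p_\gamma$, then the wide bipushout) and verifies the Kan extension property \emph{directly against $p_0$ and the full composite $x_{0,i+2}$}, via the 2-cells $\bar\mu$, $\tilde\mu$, $\hat\mu_\gamma$ and the two-dimensional universal properties of the bicolimits. Your limit step and your $i+1$ step are essentially the paper's (and sound), but the induction cannot be run stage-by-stage through $\maps^{\sat}$-membership of the connecting maps; it must be carried out relative to $X_0$ throughout, which is the real content of the paper's argument.
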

\begin{proof} The proof is by transfinite induction on ordinals.

\vskip2mm

\noindent \emph{Limit step}.
 Assume the property holds for all $i< \kappa$, where $\kappa$ is a limit ordinal. 
 Then, by construction of $X_\kappa$ \blue (as a bicolimit)\black, there is a unique (up-to-iso) $1$-cell $p_\kappa\colon X_\kappa \to P$ \blue equipped with, for any $i< \kappa$, invertible 2-cells $\mathbf{p}_{i,\kappa}\colon p_i\Rightarrow p_\kappa x_{i,\kappa}$ such that the equation below holds. \black   
 \begin{equation}\label{eq:def-p_k}
\begin{tikzcd}[ampersand replacement=\&]
	{X_i} \&\& {X_\kappa} \&\& {X_i} \&\& {X_\kappa} \\
	\&\&\& {=} \&\& {X_j} \\
	\& P \&\&\&\& P
	\arrow[""{name=0, anchor=center, inner sep=0}, "{p_i}"', curve={height=18pt}, from=1-1, to=3-2]
	\arrow[""{name=1, anchor=center, inner sep=0}, "{p_\kappa}", curve={height=-18pt}, from=1-3, to=3-2]
	\arrow[""{name=2, anchor=center, inner sep=0}, "{p_i}"', curve={height=18pt}, from=1-5, to=3-6]
	\arrow[""{name=3, anchor=center, inner sep=0}, "{p_j}"{description, pos=0.3}, from=2-6, to=3-6]
	\arrow["{x_{i,\kappa}}"', from=2-6, to=1-7]
	\arrow[""{name=4, anchor=center, inner sep=0}, "{x_{i,j}}"', from=1-5, to=2-6]
	\arrow[""{name=5, anchor=center, inner sep=0}, "{p_\kappa}", curve={height=-18pt}, from=1-7, to=3-6]
	\arrow[""{name=6, anchor=center, inner sep=0}, "{x_{i,\kappa}}", from=1-5, to=1-7]
	\arrow["{x_{i,\kappa}}", from=1-1, to=1-3]
	\arrow["{\mathbf{p}_{i,j}}"'{xshift=-0.1cm,yshift=-0.1cm}, shorten <=6pt, shorten >=6pt, Rightarrow, from=2, to=2-6]
	\arrow["{\mathbf{x}_{i,\kappa}^j}"', shift right=1, shorten <=8pt, shorten >=8pt, Rightarrow, from=4, to=6]
	\arrow["{\mathbf{p}_{j,\kappa}}"'{pos=0.6,xshift=-0.3cm,yshift=-0.1cm}, shift left=3, shorten <=19pt, shorten >=6pt, Rightarrow, from=3, to=5]
	\arrow["{\mathbf{p}_{i,\kappa}}", shorten <=26pt, shorten >=26pt, Rightarrow, from=0, to=1]
\end{tikzcd}
\end{equation}

    We want to show that $(p_\kappa, \mathbf{p}_{0\kappa})$ \blue is a Kan extension of $p_0$ along $x_{0\kappa}$\black. Given a $1$-cell $r$ and a $2$-cell $\alpha$ as below, for every $i < \kappa$, since $p_i \cong p_0/x_{0i}$ by inductive hypothesis, we have a unique $2$-cell $\alpha_i\colon p_i \Rightarrow rx_{i\kappa}$ such that the following equality holds. 

\begin{center}
\begin{tikzcd}[ampersand replacement=\&]
	\& {X_i} \\
	{X_0} \&\& {X_\kappa} \\
	P
	\arrow[""{name=0, anchor=center, inner sep=0}, "{p_0}"', from=2-1, to=3-1]
	\arrow["r", curve={height=-12pt}, from=2-3, to=3-1]
	\arrow["{x_{0i}}", from=2-1, to=1-2]
	\arrow[""{name=1, anchor=center, inner sep=0}, "{x_{i\kappa}}", from=1-2, to=2-3]
	\arrow[""{name=2, anchor=center, inner sep=0}, "{x_{0\kappa}}"{description}, from=2-1, to=2-3]
	\arrow["\alpha"{xshift=-0.1cm}, shift right=3, shorten <=25pt, shorten >=40pt, Rightarrow, from=0, to=2-3]
	\arrow["{(\compX^i_{0,\kappa})^{-1}}"{pos=0.3, xshift=0.15cm}, shift left=1, shorten <=10pt, shorten >=10pt, Rightarrow, from=2, to=1]
\end{tikzcd}
\hspace{0.5cm}$=$\hspace{0.5cm}
\begin{tikzcd}[ampersand replacement=\&]
	{X_0} \&\& {X_i} \& {X_\kappa} \\
	P
	\arrow["{x_{0i}}", from=1-1, to=1-3]
	\arrow[""{name=0, anchor=center, inner sep=0}, "{p_0}"', from=1-1, to=2-1]
	\arrow[""{name=1, anchor=center, inner sep=0}, "{p_i}"{description}, from=1-3, to=2-1]
	\arrow[""{name=2, anchor=center, inner sep=0}, "r", curve={height=-12pt}, from=1-4, to=2-1]
	\arrow["{x_{i\kappa}}", from=1-3, to=1-4]
	\arrow["{\alpha_i}", shorten <=8pt, shorten >=8pt, Rightarrow, from=1, to=2]
	\arrow["{\mathbf{p}_{0,i}}"{yshift=0.1cm}, shorten <=13pt, shorten >=13pt, Rightarrow, from=0, to=1]
\end{tikzcd}
\end{center}
\blue Moreover, using the universal property of $p_j$ as a Kan extension, we can prove that for any $j< i$ we get the following equality. 
\begin{center}
\begin{tikzcd}[ampersand replacement=\&]
	\& {X_i} \\
	{X_j} \&\& {X_\kappa} \\
	P
	\arrow[""{name=0, anchor=center, inner sep=0}, "{p_j}"', from=2-1, to=3-1]
	\arrow["r", curve={height=-12pt}, from=2-3, to=3-1]
	\arrow["{x_{ji}}", from=2-1, to=1-2]
	\arrow[""{name=1, anchor=center, inner sep=0}, "{x_{i\kappa}}", from=1-2, to=2-3]
	\arrow[""{name=2, anchor=center, inner sep=0}, "{x_{j\kappa}}"{description}, from=2-1, to=2-3]
	\arrow["({\compX^i_{j,\kappa})^{-1}}"{pos=0.3,xshift=0.15cm}, shift left=1, shorten <=10pt, shorten >=10pt, Rightarrow, from=2, to=1]
	\arrow["{\alpha_j}"{xshift=-0.1cm}, shift right=3, shorten <=25pt, shorten >=40pt, Rightarrow, from=0, to=2-3]
\end{tikzcd}
\hspace{0.5cm}$=$\hspace{0.5cm}
\begin{tikzcd}[ampersand replacement=\&]
	{X_j} \&\& {X_i} \& {X_\kappa} \\
	P
	\arrow["{x_{ji}}", from=1-1, to=1-3]
	\arrow[""{name=0, anchor=center, inner sep=0}, "{p_j}"', from=1-1, to=2-1]
	\arrow[""{name=1, anchor=center, inner sep=0}, "{p_i}"{description}, from=1-3, to=2-1]
	\arrow[""{name=2, anchor=center, inner sep=0}, "r", curve={height=-12pt}, from=1-4, to=2-1]
	\arrow["{x_{i\kappa}}", from=1-3, to=1-4]
	\arrow["{\alpha_i}", shorten <=8pt, shorten >=8pt, Rightarrow, from=1, to=2]
	\arrow["{\mathbf{p}_{j,i}}"{yshift=0.1cm}, shorten <=13pt, shorten >=13pt, Rightarrow, from=0, to=1]
\end{tikzcd}
\end{center}

\black 
    This implies that, using the 2-dimensional aspect of the universality of the bicolimit of the pseudochain $(X_i)_{i < \kappa}$, there exists a unique $2$-cell $\overline{\alpha}\colon p_\kappa \Rightarrow r$, with
    \begin{center}
        $\alpha_i=$
\adjustbox{scale=0.8}{
\begin{tikzcd}[ampersand replacement=\&]
	{X_i} \&\& {X_\kappa} \\
	\\
	P
	\arrow[""{name=0, anchor=center, inner sep=0}, "{p_i}"', from=1-1, to=3-1]
	\arrow["{x_{i\kappa}}", from=1-1, to=1-3]
	\arrow[""{name=1, anchor=center, inner sep=0}, "{p_\kappa}"{description}, from=1-3, to=3-1]
	\arrow[""{name=2, anchor=center, inner sep=0}, "r", curve={height=-24pt}, from=1-3, to=3-1]
	\arrow["{\overline{\alpha}}", shorten <=6pt, shorten >=4pt, Rightarrow, from=1, to=2]
	\arrow["{\mathbf{p}_{i,\kappa}}"{yshift=0.1cm}, shorten <=13pt, shorten >=13pt, Rightarrow, from=0, to=1]
\end{tikzcd}}
    for every $i < \kappa$
    and in particular $\alpha=$
\adjustbox{scale=0.8}{
\begin{tikzcd}[ampersand replacement=\&]
	{X_0} \&\& {X_\kappa} \\
	\\
	P
	\arrow[""{name=0, anchor=center, inner sep=0}, "{p_0}"', from=1-1, to=3-1]
	\arrow["{x_{0\kappa}}", from=1-1, to=1-3]
	\arrow[""{name=1, anchor=center, inner sep=0}, "{p_\kappa}"{description}, from=1-3, to=3-1]
	\arrow[""{name=2, anchor=center, inner sep=0}, "r", curve={height=-24pt}, from=1-3, to=3-1]
	\arrow["{\overline{\alpha}}", shorten <=6pt, shorten >=4pt, Rightarrow, from=1, to=2]
	\arrow["{\mathbf{p}_{0,\kappa}}"{yshift=0.1cm}, shorten <=13pt, shorten >=13pt, Rightarrow, from=0, to=1]
\end{tikzcd}}.
    \end{center}
     For the unicity of $\overline{\alpha}$ \blue we notice that this 2-cell was uniquely determined by the $\alpha_i$'s which are defined using only $\alpha$.\footnote{\blue More precisely: Let $\beta$ be another 2-cell such that $\alpha=(\beta\circ x_{0\kappa})\cdot \mathbf{p}_{0\kappa}$. Then, by the universal property of the Kan extension $p_i$, we have that $(\beta\circ x_{i\kappa})\cdot \mathbf{p}_{0\kappa}=\alpha_i$ since they both correspond to $\alpha$ (modulo pasting with $\mathbf{x}^i_{0,\kappa}$). Hence, by the universal property of the bicolimit, $\beta=\overline{\alpha}$.\black } \black   Consequently, $(p_\kappa, \mathbf{p}_{0\kappa})$ \blue is a Kan extension of $p_0$ along $x_{0\kappa}$.
     
     \black
    Moreover, let us consider $u\colon P\to Q$ in $\slinj(\maps)$ and $p_0\colon X_0\to P$ in $\catk$. By induction hypothesis we assume that $u$ is left Kan injective with respect to $x_{i\kappa}$ for all $i<k$, i.e. $(up)/x_{0i}\cong u(p/x_{0i})$. We want to show that $u$ is also in $\slinj(\lbrace x_{0\kappa}\rbrace)$.
    \begin{center}
    \begin{tabular}{rlr}
         $((up)/x_{0k})x_{ik}$& $\cong (up)_\kappa x_{ik}\cong (up)_i\cong (up)/x_{0i}$  & (by part above for $up$)\\
         &  $\cong u(p/x_{0i})$ & (by induction hypothesis) \\
         & $\cong u(p/x_{0k})x_{ik}$ & (by part above for $p$).
    \end{tabular}
    \end{center}
    Hence, by the universal property of the bicolimit,  $(up)/x_{0k}\cong u(p/x_{0k})$.
    
    \vskip2.5mm

 \noindent \emph{Isolated step}. Let $i$ be an even ordinal such that every $x_{0j}$, with ${j \leq i}$, belongs to the Kan injective saturation of $\mathcal{H}$. We treat the two cases of the construction, $i+1$ and $i+2$ for $i$ even,  separately.

\begin{enumerate}
    \item[(1)] As seen in the construction of the pseudochain, $x_{i,i+1}\colon X_i\to X_{i+1}$ is a wide bipushout of bipushouts of morphisms along 1-cells of $\mathcal{H}$. Then, by Proposition \ref{prop:saturation-irr}, $P$ is Kan injective with respect to $x_{i,i+1}$. Here we see in detail how to prove that $p_{i+1}\cong p_i/x_{i,i+1}$.  Combining this with the inductive hypothesis on $x_{0i}$, we get  \blue $p_{i+1}\cong p_0/x_{0,i+1}$ as required.\black 

        Recall the bicolimit diagram (\ref{diag:w-bipu}) used in the construction of the pseudochain. \blue Since for any $h\in\maps$ and $f\colon A\to X_i\in\catk$ we have an isomorphism, given by the Kan extension,
\[\begin{tikzcd}[ampersand replacement=\&]
	A \& {A'} \\
	{X_i} \\
	\&\& P,
	\arrow["h", from=1-1, to=1-2]
	\arrow["f"', from=1-1, to=2-1]
	\arrow["{p_i}"', curve={height=12pt}, from=2-1, to=3-3]
	\arrow[""{name=0, anchor=center, inner sep=0}, "{(p_if)/h}"{pos=0.6}, curve={height=-12pt}, from=1-2, to=3-3]
	\arrow["{\xi_{p_if}^h}", shorten <=20pt, shorten >=20pt, Rightarrow, from=2-1, to=0]
\end{tikzcd}\]
        then, by the universal property of $X_{i+1}$, we get a unique (up-to-iso) 1-cell $p_{i+1}\colon X_{i+1}\to P$ equipped with invertible 2-cells $\mathbf{p}_{i,i+1}\colon p_i\Rightarrow p_{i+1}x_{i,i+1}$ and $\pi\colon p_{i+1}f//h\Rightarrow (p_if)/h$ such that
\begin{equation}\label{diag:xi_pif}
\xi_{p_if}^h=
\begin{tikzcd}[ampersand replacement=\&]
	A \& {A'} \\
	{X_i} \& {X_{i+1}} \\
	\&\& P.
	\arrow["h", from=1-1, to=1-2]
	\arrow[""{name=0, anchor=center, inner sep=0}, "f"', from=1-1, to=2-1]
	\arrow[""{name=1, anchor=center, inner sep=0}, "{f//h}"{description}, from=1-2, to=2-2]
	\arrow["{x_{i,i+1}}"{description}, from=2-1, to=2-2]
	\arrow[""{name=2, anchor=center, inner sep=0}, "{p_i}"', curve={height=12pt}, from=2-1, to=3-3]
	\arrow[""{name=3, anchor=center, inner sep=0}, "{p_{i+1}}"{description}, dashed, from=2-2, to=3-3]
	\arrow[""{name=4, anchor=center, inner sep=0}, "{(p_if)/h}"{pos=0.6}, curve={height=-12pt}, from=1-2, to=3-3]
	\arrow["{\mathbf{p}_{i,i+1}}", shorten <=10pt, shorten >=10pt, Rightarrow, from=2, to=3]
	\arrow["\pi", shorten <=5pt, shorten >=5pt, Rightarrow, from=2-2, to=4]
	\arrow["{\tilde{\xi}_f^h}", shorten <=16pt, shorten >=16pt, Rightarrow, from=0, to=1]
\end{tikzcd} 
\end{equation}\black 
        We want to show that $(p_{i+1},\mathbf{p}_{i,i+1})$ is a left Kan extension of $p_i$ along $x_{i,i+1}$, i.e. $p_{i+1}\cong p_i/x_{i,i+1}$.

To do so, consider a $1$-cell $r\colon X_{i+1}\to P$ and a 2-cell $\alpha\colon p_i\Rightarrow rx_{i,i+1}$. \blue Let  $\tilde{\alpha}$ be the composition
\[\tilde{\alpha}:=\left(\begin{tikzcd}[ampersand replacement=\&]
	{p_{i+1}x_{i,i+1}} \& {p_i} \& {rx_{i,i+1}}
	\arrow["{{\mathbf{p}_{i,i+1}}^{-1}}"{yshift=0.1cm}, Rightarrow, from=1-1, to=1-2]
	\arrow["\alpha"{yshift=0.1cm}, Rightarrow, from=1-2, to=1-3]
\end{tikzcd}\right).\] \black 
For every span $(h,f)$, let $\bar{\alpha}_{hf}$ be the unique 2-cell for which we have the following equality, determined by the universality of the Kan extension $(\xi^h_{p_if},(p_if)/h)$,
\begin{center}
\begin{tikzcd}[ampersand replacement=\&]
	A \& {A'} \\
	{X_i} \&\& {X_{i+1}} \\
	P
	\arrow["h", from=1-1, to=1-2]
	\arrow[""{name=0, anchor=center, inner sep=0}, "f"', from=1-1, to=2-1]
	\arrow["{p_i}"', from=2-1, to=3-1]
	\arrow[""{name=1, anchor=center, inner sep=0}, "{(p_if)/h}"{description, pos=0.7}, from=1-2, to=3-1]
	\arrow["{f//h}", curve={height=-6pt}, from=1-2, to=2-3]
	\arrow["r", curve={height=-6pt}, from=2-3, to=3-1]
	\arrow["{\bar{\alpha}_{hf}}", shorten <=20pt, shorten >=20pt, Rightarrow, from=1, to=2-3]
	\arrow["{\xi^h_{p_if}}", shorten <=10pt, shorten >=10pt, Rightarrow, from=0, to=1]
\end{tikzcd} \hspace{1cm}$=$\hspace{1cm}
\begin{tikzcd}[ampersand replacement=\&]
	A \& {A'} \\
	{X_i} \& {X_{i+1}} \\
	P
	\arrow["h", from=1-1, to=1-2]
	\arrow[""{name=0, anchor=center, inner sep=0}, "f"', from=1-1, to=2-1]
	\arrow["{x_{i,i+1}}", from=2-1, to=2-2]
	\arrow[""{name=1, anchor=center, inner sep=0}, "{f//h}", from=1-2, to=2-2]
	\arrow[""{name=2, anchor=center, inner sep=0}, "{p_i}"', from=2-1, to=3-1]
	\arrow[""{name=3, anchor=center, inner sep=0}, "r", from=2-2, to=3-1]
	\arrow["\alpha"{yshift=0.15cm}, shorten <=6pt, shorten >=6pt, Rightarrow, from=2, to=3]
	\arrow["{\tilde{\xi}_f^h}", shorten <=16pt, shorten >=16pt, Rightarrow, from=0, to=1]
\end{tikzcd}
\end{center}

and set
\begin{center}
    $\tilde{\alpha}_{hf}:=$
\begin{tikzcd}[ampersand replacement=\&]
	\& {X_{i+1}} \\
	{A'} \&\& P \\
	\& {X_{i+1}}
	\arrow[""{name=0, anchor=center, inner sep=0}, "{(p_if)/h}"{description}, from=2-1, to=2-3]
	\arrow["{f//h}"', from=2-1, to=3-2]
	\arrow["r"', from=3-2, to=2-3]
	\arrow["{f//h}", from=2-1, to=1-2]
	\arrow["{p_{i+1}}", from=1-2, to=2-3]
	\arrow["{\bar{\alpha}_{hf}}"{xshift=0.1cm}, shorten <=10pt, shorten >=6pt, Rightarrow, from=0, to=3-2]
	\arrow["\pi"{xshift=0.1cm}, shorten <=6pt, shorten >=10pt, Rightarrow, from=1-2, to=0]
\end{tikzcd}
\end{center}
The 2-cells $\tilde{\alpha}$ and $\tilde{\alpha}_{hf}$
 satisfy the conditions under which we can apply the 2-dimensional aspect of the universality of the bicolimit given by (\ref{diag:w-bipu}). Consequently, there is a unique $\bar{\alpha}\colon p_{i+1}\Rightarrow r$ with $\bar{\alpha} x_{i,i+1}=\tilde{\alpha}$ and $\bar{\alpha} (f//h)=\tilde{\alpha}_{hf}$.

Hence, \blue taking into account the definition of $\tilde{\alpha}$ 
 and using the property of $\bar{\alpha}_{hf}$, \black we see that $\overline{\alpha}$ satisfies also the following equation
\begin{center}
    $\alpha=$
\begin{tikzcd}[ampersand replacement=\&]
	{X_i} \&\& {X_{i+1}} \\
	\\
	P
	\arrow[""{name=0, anchor=center, inner sep=0}, "{p_{i+1}}"{description}, curve={height=12pt}, from=1-3, to=3-1]
	\arrow[""{name=1, anchor=center, inner sep=0}, "r", curve={height=-18pt}, from=1-3, to=3-1]
	\arrow["{x_{i,i+1}}", from=1-1, to=1-3]
	\arrow["{p_i}"', from=1-1, to=3-1]
	\arrow["{\overline{\alpha}}", shorten <=10pt, shorten >=6pt, Rightarrow, from=0, to=1]
	\arrow["{\mathbf{p}_{i,i+1}}", shorten <=4pt, shorten >=11pt, Rightarrow, from=1-1, to=0]
\end{tikzcd}
\end{center}
and that it is unique.

 Concerning 1-cells, let $u\colon P\to Q$ be in $\slinj(\maps)$ and set $q:=up$. Adding $u$ to diagram \eqref{diag:xi_pif}, we have $(up_i)/h\cong u(p_i/h)$, and $up_{i}\cong q_{i}$. Thus $up_{i+1}$ and $q_{i+1}$ take isomorphic values when composed with $x_{i,i+1}$ and $f//h$. Consequently, $q_{i+1}\cong up_{i+1}$, i.e. $(up)/x_{0,i+1}\cong u(p/x_{0,i+1})$.

    \item[(2)] \blue First, we consider each $\gamma \in \Gamma$ (see part 2.(c) of Construction~\ref{kansmallobject}) and we construct a 1-cell $p_{\gamma}\colon C_{\gamma}\to P$ equipped with an invertible 2-cell $\mathbf{p}^c_\gamma\colon p_{i+1}\Rightarrow p_{\gamma}\circ c_{\gamma}$ associated to $\gamma$. We divide the two different kinds of $\gamma$ in the parts (a) and (b) below. 
    \black
    \begin{enumerate}
        \item[(a)] Let $\gamma$ be the following $2$-cell, with $j$ even and $j \leq i$:
\[\begin{tikzcd}[ampersand replacement=\&]
	A \&\& {A'} \\
	{A'} \& {X_{j+1}} \& {X_{i+1}.}
	\arrow["h", from=1-1, to=1-3]
	\arrow[""{name=0, anchor=center, inner sep=0}, "h"', from=1-1, to=2-1]
	\arrow["{f//h}"', from=2-1, to=2-2]
	\arrow["x_{j+1,i+1}"', from=2-2, to=2-3]
	\arrow[""{name=1, anchor=center, inner sep=0}, "s", from=1-3, to=2-3]
	\arrow["\gamma"{yshift=0.075cm}, shorten <=40pt, shorten >=40pt, Rightarrow, from=0, to=1]
\end{tikzcd}\]

Since $p_{j+1}(f//h)\cong(p_jf)/h$ is a left Kan extension, see diagram (\ref{diag:xi_pif}), there exists a unique 2-cell $\bar{\gamma} \colon p_{j+1}(f//h) \Rightarrow p_{i+1}s$ such that
\begin{equation}\label{diag:bar_gamma}
\begin{tikzcd}[ampersand replacement=\&]
	A \& {A'} \\
	{X_{j}} \& {A'} \\
	\& {X_{j+1}} \& {X_{i+1}} \\
	\& P
	\arrow["h", from=1-1, to=1-2]
	\arrow["h", from=1-1, to=2-2]
	\arrow["{f//h}", from=2-2, to=3-2]
	\arrow["{x_{j+1,i+1}}", from=3-2, to=3-3]
	\arrow[""{name=0, anchor=center, inner sep=0}, "s", curve={height=-12pt}, from=1-2, to=3-3]
	\arrow[""{name=1, anchor=center, inner sep=0}, "{p_{i+1}}", curve={height=-6pt}, from=3-3, to=4-2]
	\arrow[""{name=2, anchor=center, inner sep=0}, "{p_{j+1}}"{description}, from=3-2, to=4-2]
	\arrow["f"', from=1-1, to=2-1]
	\arrow["{x_{j,j+1}}"{description}, from=2-1, to=3-2]
	\arrow["{\tilde{\xi}^h_f}", shift right=1, shorten <=8pt, shorten >=8pt, Rightarrow, from=2-1, to=2-2]
	\arrow[""{name=3, anchor=center, inner sep=0}, "{p_j}"', curve={height=12pt}, from=2-1, to=4-2]
	\arrow["\gamma", shorten <=8pt, shorten >=8pt, Rightarrow, from=2-2, to=0]
	\arrow["{\mathbf{p}_{j,j+1}}"{xshift=0.35cm,yshift=0.1cm}, shorten <=7pt, shorten >=5pt, Rightarrow, from=3, to=3-2]
	\arrow["{\mathbf{p}_{j+1,i+1}}"{xshift=-0.25cm,yshift=0.1cm}, shorten <=10pt, shorten >=8pt, Rightarrow, from=2, to=1]
\end{tikzcd} \hspace{0.5cm}=\hspace{0.5cm}
\begin{tikzcd}[ampersand replacement=\&]
	A \& {A'} \\
	{X_j} \& {X_{j+1}} \& {X_{i+1}} \\
	\\
	\& P
	\arrow["s", curve={height=-6pt}, from=1-2, to=2-3]
	\arrow["{p_{j+1}}"{description}, from=2-2, to=4-2]
	\arrow["{p_{i+1}}", curve={height=-6pt}, from=2-3, to=4-2]
	\arrow[""{name=0, anchor=center, inner sep=0}, "{f//h}"{description}, from=1-2, to=2-2]
	\arrow["h", from=1-1, to=1-2]
	\arrow["{\overline{\gamma}}", shorten <=4pt, shorten >=4pt, Rightarrow, from=2-2, to=2-3]
	\arrow[""{name=1, anchor=center, inner sep=0}, "f"', from=1-1, to=2-1]
	\arrow["{x_{j,j+1}}", from=2-1, to=2-2]
	\arrow[""{name=2, anchor=center, inner sep=0}, "{p_j}"', from=2-1, to=4-2]
	\arrow["{\tilde{\xi}^h_f}", shorten <=15pt, shorten >=15pt, Rightarrow, from=1, to=0]
	\arrow["{\mathbf{p}_{j,j+1}}"{pos=0.8,yshift=-0.15cm}, shorten <=8pt, shorten >=8pt, Rightarrow, from=2, to=2-2]
\end{tikzcd}
\end{equation}

Therefore, by the 1-dimensional aspect of the universality of the bicoequinserter, there is a unique (up-to-iso) $1$-cell $p_\gamma\colon C_\gamma \to  P$ with invertible 2-cells $ \mathbf{p}^c_\gamma\colon p_{i+1}\Rightarrow p_\gamma c_\gamma $ such that
\begin{equation}\label{eq:def-p_gamma}
\scalebox{0.9}{
\begin{tikzcd}[ampersand replacement=\&]
	\& {X_{j+1}} \& {X_{i+1}} \\
	{A'} \&\& {C_\gamma} \& P \\
	\& {X_{i+1}}
	\arrow["s"', from=2-1, to=3-2]
	\arrow["{f//h}", from=2-1, to=1-2]
	\arrow["{x_{j+1,i+1}}", from=1-2, to=1-3]
	\arrow["{c_\gamma}"', from=1-3, to=2-3]
	\arrow["{c_\gamma}"{description}, from=3-2, to=2-3]
	\arrow["{p_\gamma}"{description}, from=2-3, to=2-4]
	\arrow[""{name=0, anchor=center, inner sep=0}, "{p_{i+1}}", curve={height=-6pt}, from=1-3, to=2-4]
	\arrow[""{name=1, anchor=center, inner sep=0}, "{p_{i+1}}"', curve={height=6pt}, from=3-2, to=2-4]
	\arrow["{\chi_\gamma}", shorten <=13pt, shorten >=13pt, Rightarrow, from=1-2, to=3-2]
	\arrow["{(\mathbf{p}^c_\gamma)^{-1}}"{pos=0.2,yshift=0.4cm,xshift=0.1cm}, shorten <=2pt, shorten >=4pt, Rightarrow, from=2-3, to=1]
	\arrow["{\mathbf{p}^c_\gamma}"', shorten <=6pt, shorten >=6pt, Rightarrow, from=0, to=2-3]
\end{tikzcd} \hspace{0.25cm}=\hspace{0.25cm}
\begin{tikzcd}[ampersand replacement=\&]
	\& {X_{j+1}} \& {X_{i+1}} \\
	{A'} \&\&\& P \\
	\& {X_{i+1}}
	\arrow["{x_{j+1,i+1}}", from=1-2, to=1-3]
	\arrow["{p_{i+1}}", curve={height=-6pt}, from=1-3, to=2-4]
	\arrow[""{name=0, anchor=center, inner sep=0}, "{p_{j+1}}"', curve={height=6pt}, from=1-2, to=2-4]
	\arrow["{p_{i+1}}"', from=3-2, to=2-4]
	\arrow["s"', from=2-1, to=3-2]
	\arrow["{f//h}", from=2-1, to=1-2]
	\arrow["{\overline{\gamma}}", shorten <=13pt, shorten >=13pt, Rightarrow, from=1-2, to=3-2]
	\arrow["{\mathbf{p}_{j+1,i+1}}"{xshift=0.1cm,yshift=0.1cm}, shift right=1, shorten <=4pt, shorten >=4pt, Rightarrow, from=1-3, to=0]
\end{tikzcd}}
\end{equation}

\item[(b)] \blue Let $\gamma=\{\sigma, \tau\}$ with $\sigma$ and $\tau$ two 2-cells as in 2.(b) of Construction \ref{kansmallobject}, thus $\sigma \circ h=\tau\circ h$.
\[\begin{tikzcd}[ampersand replacement=\&]
	\&\& {X_{j+1}} \\
	A \& {A'} \&\& {X_{i+1}} \& P
	\arrow["h", from=2-1, to=2-2]
	\arrow["{f//h}", curve={height=-6pt}, from=2-2, to=1-3]
	\arrow[""{name=0, anchor=center, inner sep=0}, "s"', from=2-2, to=2-4]
	\arrow["{p_{i+1}}", from=2-4, to=2-5]
	\arrow["{x_{j+1,i+1}}", curve={height=-6pt}, from=1-3, to=2-4]
	\arrow["\sigma", shift right=5, shorten <=8pt, shorten >=5pt, Rightarrow, from=1-3, to=0]
	\arrow["\tau"', shift left=5, shorten <=8pt, shorten >=5pt, Rightarrow, from=1-3, to=0]
\end{tikzcd}\]

Then, $p_{i+1}\circ \sigma\circ h=p_{i+1}\circ \tau\circ h$, and, since $(p_jf)/h\cong p_{j+1}(f//h)\cong p_{i+1} x_{j+1,i+1}(f//h)$, it follows that $p_{i+1}\circ \sigma=p_{i+1}\circ \tau$. Consequently, for $\gamma=\{\sigma, \tau\}$,  by the 1-dimensional universality of the bicoequifier, there is  a unique (up-to-iso) $1$-cell $p_\gamma\colon C_\gamma \to  P$ with an invertible 2-cell $ \mathbf{p}^c_\gamma\colon p_{i+1}\Rightarrow p_\gamma c_\gamma$:
\begin{equation}\label{eq-coeq}
\adjustbox{scale=0.7}{\begin{tikzcd}
	{X_{i+1}} && {C_{\gamma}} \\
	&& {} \\
	&& P
	\arrow["{c_{\gamma}}", from=1-1, to=1-3]
	\arrow[""{name=0, anchor=center, inner sep=0}, "{p_{i+1}}"', from=1-1, to=3-3]
	\arrow["{p_{\gamma}}", from=1-3, to=3-3]
	\arrow["{\mathbf{p}_{\gamma}^c}"', shorten <=6pt, Rightarrow, from=0, to=1-3]
\end{tikzcd}}
\end{equation}

   \end{enumerate}
   
 Combining (a), see (\ref{eq:def-p_gamma}), and (b), we have a diagram as in (\ref{eq-coeq}) for every $\gamma \in \Gamma$.
\blue Hence, by the universal property of the wide bipushout $X_{i+2}$, we get \black a unique (up-to-iso) morphism $p_{i+2}\colon X_{i+2} \to P$ \blue with invertible 2-cells $\mathbf{p}_{i+1,i+2}\colon p_{i+1}\Rightarrow p_{i+2}x_{i+1,i+2}$ and $\mathbf{p}^d_\gamma\colon p_{i+2}d_\gamma\Rightarrow p_\gamma$ such that 

\begin{equation}\label{eq:def-p_i+2}
    \mathbf{p}_\gamma^c=
\begin{tikzcd}[ampersand replacement=\&]
	{X_{i+1}} \& {C_\gamma} \\
	\& {X_{i+2}} \\
	\&\& P
	\arrow[""{name=0, anchor=center, inner sep=0}, "{p_{i+1}}"', curve={height=50pt}, from=1-1, to=3-3]
	\arrow["{c_\gamma}", from=1-1, to=1-2]
	\arrow[""{name=1, anchor=center, inner sep=0}, "{p_\gamma}", curve={height=-20pt}, from=1-2, to=3-3]
	\arrow[""{name=2, anchor=center, inner sep=0}, from=1-1, to=2-2]
	\arrow[""{name=3, anchor=center, inner sep=0}, "{d_\gamma}"{description}, from=1-2, to=2-2]
	\arrow["{p_{i+2}}"{description}, from=2-2, to=3-3]
	\arrow["{\mathbf{p}_{i+1,i+2}}"{pos=0.7,yshift=0cm,xshift=0.1cm},shift right=1, shorten <=6pt, shorten >=6pt, Rightarrow, from=0, to=2-2]
	\arrow["{\mathbf{p}_{\gamma}^d}", shorten <=7pt, shorten >=7pt, Rightarrow, from=2-2, to=1]
	\arrow["{\delta_\gamma}"{yshift=0.1cm}, shorten <=8pt, shorten >=8pt, Rightarrow, from=2, to=3]
\end{tikzcd}
\end{equation}
\black 
\blue  Put $\mathbf{p}_{0,i+2}=(\mathbf{p}_{i+1,i+2}\circ x_{0,i+1})\cdot \mathbf{p}_{0,i+1}$. We now would like to conclude that  $(p_{i+2},\mathbf{p}_{0,i+2})$ is a Kan extension of $p_0$ along $x_{0,i+2}$ and so \black  $p_{i+2} \cong p_0/x_{0,i+2}$. In order to do so, we consider any $2$-cell $(r,\mu)$ as below:

\[\begin{tikzcd}[ampersand replacement=\&]
	{X_0} \& {X_{i+2}} \& {X_0} \& {X_{i+2}} \\
	P \&\& P
	\arrow[""{name=0, anchor=center, inner sep=0}, "{p_{i+2}}", dotted, from=1-2, to=2-1]
	\arrow[""{name=1, anchor=center, inner sep=0}, "{p_0}"', from=1-1, to=2-1]
	\arrow["{x_{0i}}", from=1-1, to=1-2]
	\arrow["{x_{0,i+2}}", from=1-3, to=1-4]
	\arrow[""{name=2, anchor=center, inner sep=0}, "{p_0}"', from=1-3, to=2-3]
	\arrow[""{name=3, anchor=center, inner sep=0}, "r", from=1-4, to=2-3]
	\arrow["\mu"{yshift=0.1cm}, shorten <=6pt, shorten >=6pt, Rightarrow, from=2, to=3]
	\arrow["{\mathbf{p}_{0,i+2}}"{yshift=0.1cm}, shorten <=6pt, shorten >=6pt, Rightarrow, from=1, to=0]
\end{tikzcd}\]

Since $p_{i+1}\cong p_{0}/x_{0,i+1}$, there is a unique $2$-cell $\overline{\mu}\colon p_{i+1} \Rightarrow rx_{i+1,i+2}$ such that
\begin{equation}\label{mu-barr}
\begin{tikzcd}
	{X_0} & {X_{i+1}} \\
	&& {X_{i+2}} \\
	P
	\arrow["{x_{0,i+1}}", from=1-1, to=1-2]
	\arrow["{x_{i+1,i+2}}", from=1-2, to=2-3]
	\arrow[""{name=0, anchor=center, inner sep=0}, "{p_0}"', from=1-1, to=3-1]
	\arrow["{p_{i+2}}", from=2-3, to=3-1]
	\arrow["\mu"', shorten <=22pt, shorten >=22pt, Rightarrow, from=0, to=2-3]
\end{tikzcd}
\hspace{0.5cm}=\hspace{0.5cm}
\begin{tikzcd}[ampersand replacement=\&]
	{X_0} \& {X_{i+1}} \\
	\&\& {X_{i+2}} \\
	P
	\arrow["{x_{0,i+1}}", from=1-1, to=1-2]
	\arrow["{x_{i+1,i+2}}", from=1-2, to=2-3]
	\arrow[""{name=0, anchor=center, inner sep=0}, "{p_0}"', from=1-1, to=3-1]
	\arrow["{p_{i+2}}", from=2-3, to=3-1]
	\arrow[""{name=1, anchor=center, inner sep=0}, "{p_{i+1}}"{description}, from=1-2, to=3-1]
	\arrow["{\overline{\mu}}"{yshift=0.1cm}, shorten <=20pt, shorten >=20pt, Rightarrow, from=1, to=2-3]
	\arrow["{\mathbf{p}_{0,i+1}}"{pos=0.6,yshift=0.1cm}, shift left=5, shorten <=6pt, shorten >=6pt, Rightarrow, from=0, to=1]
\end{tikzcd}
\end{equation} 
So, for \blue every $\gamma\in \Gamma$, \black we obtain a $2$-cell $\tilde{\mu}$ defined as the pasting diagram below.
\begin{equation}\label{mu-tilde}
\tilde{\mu}:=
\begin{tikzcd}[ampersand replacement=\&]
	\&\& {C_\gamma} \\
	{X_{i+1}} \&\&\&\& P \\
	\& {C_\gamma} \&\& {X_{i+2}}
	\arrow["{c_\gamma}", from=2-1, to=1-3]
	\arrow["{c_\gamma}"', from=2-1, to=3-2]
	\arrow["{p_\gamma}", from=1-3, to=2-5]
	\arrow[""{name=0, anchor=center, inner sep=0}, "{p_{i+1}}"{description}, curve={height=-12pt}, from=2-1, to=2-5]
	\arrow["r"', from=3-4, to=2-5]
	\arrow["{d_\gamma}"', from=3-2, to=3-4]
	\arrow[""{name=1, anchor=center, inner sep=0}, "x"{description, pos=0.8}, from=2-1, to=3-4]
	\arrow["{({\mathbf{p}_\gamma^c})^{-1}}"{yshift=-0.2cm,xshift=0.1cm}, shorten <=2pt, shorten >=4pt, Rightarrow, from=1-3, to=0]
	\arrow["{\delta_\gamma}"{pos=0.3}, shorten <=6pt, shorten >=2pt, Rightarrow, from=1, to=3-2]
	\arrow["{\bar{\mu}}", shorten <=10pt, shorten >=10pt, Rightarrow, from=0, to=1]
\end{tikzcd}
\end{equation}

\blue First, in (a$^{\prime}$) and (b$^{\prime}$) below, we are going to prove that, for every $\gamma\in \Gamma$, there is a unique 2-cell $\hat{\mu}_{\gamma}\colon p_{\gamma} \Rightarrow rd_{\gamma}$ such that $\tilde{\mu}=\hat{\mu}_{\gamma}\circ c_{\gamma}$.

\begin{enumerate}
    \item[(a$^{\prime}$)] For $\gamma$ a 2-cell as in (a), \black we want to show that $\tilde{\mu}$ satisfies the required condition for the 2-dimensional universal property of the bicoequinserter, i.e. to prove that the two pasting diagrams below are equal.

\begin{center}
\begin{tikzcd}[ampersand replacement=\&]
	{A'} \& {X_{i+1}} \\
	\& {X_{i+1}} \& {C_\gamma} \\
	\&\& {C_\gamma} \& P
	\arrow[""{name=0, anchor=center, inner sep=0}, "{c_\gamma}"{description}, from=2-2, to=2-3]
	\arrow["{c_\gamma}"', from=2-2, to=3-3]
	\arrow["{p_\gamma}", from=2-3, to=3-4]
	\arrow[""{name=1, anchor=center, inner sep=0}, "{rd_\gamma}"', from=3-3, to=3-4]
	\arrow[""{name=2, anchor=center, inner sep=0}, "{x(f//h)}"{yshift=0.1cm}, from=1-1, to=1-2]
	\arrow["s"', from=1-1, to=2-2]
	\arrow["{c_\gamma}", from=1-2, to=2-3]
	\arrow["{\chi_\gamma}"{description}, shorten <=17pt, shorten >=16pt, Rightarrow, from=2, to=0]
	\arrow["{\tilde{\mu}}"{description}, shorten <=17pt, shorten >=16pt, Rightarrow, from=0, to=1]
\end{tikzcd}
\begin{tikzcd}[ampersand replacement=\&]
	{A'} \& {X_{i+1}} \& {C_\gamma} \\
	\& {X_{i+1}} \& {C_\gamma} \& P
	\arrow[""{name=0, anchor=center, inner sep=0}, "{c_\gamma}", from=1-2, to=1-3]
	\arrow["{c_\gamma}"{description}, from=1-2, to=2-3]
	\arrow["{p_\gamma}", from=1-3, to=2-4]
	\arrow[""{name=1, anchor=center, inner sep=0}, "{rd_\gamma}"', from=2-3, to=2-4]
	\arrow[""{name=2, anchor=center, inner sep=0}, "{c_\gamma}"', from=2-2, to=2-3]
	\arrow[""{name=3, anchor=center, inner sep=0}, "{x(f//h)}"{yshift=0.1cm}, from=1-1, to=1-2]
	\arrow["s"', from=1-1, to=2-2]
	\arrow["{\tilde{\mu}}"{description}, shorten <=17pt, shorten >=16pt, Rightarrow, from=0, to=1]
	\arrow["{\chi_\gamma}"{description}, shorten <=17pt, shorten >=16pt, Rightarrow, from=3, to=2]
\end{tikzcd}
\end{center}
Since these two 2-cells have as domain
$$p_\gamma c_\gamma x_{j+1,i+1} (f//h)\cong p_{i+1}x_{j+1,i+1} (f//h)\cong p_{j+1}(f//h)\cong (p_jf)/h$$
which is a left Kan extension, then to show that they are equal it sufficies to show that precomposing with $h$ we obtain the same 2-cell. Indeed, the next three pasting diagrams give all the same 2-cell, by the  definition of $\chi_\gamma$ applied twice:

\begin{center}
\begin{tikzcd}[ampersand replacement=\&]
	A \& B \&\& {X_{i+1}} \\
	\&\& {X_{i+1}} \&\& {C_\gamma} \\
	\&\&\& {C_\gamma} \&\& P
	\arrow[""{name=0, anchor=center, inner sep=0}, "{c_\gamma}"{description}, from=2-3, to=2-5]
	\arrow["{c_\gamma}"{description}, from=2-3, to=3-4]
	\arrow[""{name=1, anchor=center, inner sep=0}, "{rd_\gamma }"', from=3-4, to=3-6]
	\arrow["{p_\gamma}", from=2-5, to=3-6]
	\arrow["h", from=1-1, to=1-2]
	\arrow["{c_\gamma}", from=1-4, to=2-5]
	\arrow[""{name=2, anchor=center, inner sep=0}, "{x\circ f//h}", from=1-2, to=1-4]
	\arrow["s"', from=1-2, to=2-3]
	\arrow["{\tilde{\mu}}", shorten <=16pt, shorten >=16pt, Rightarrow, from=0, to=1]
	\arrow["{\chi_\gamma}", shorten <=16pt, shorten >=16pt, Rightarrow, from=2, to=0]
\end{tikzcd} \\
\begin{tikzcd}[ampersand replacement=\&]
	A \&\& {X_j} \\
	\& B \&\& {X_{i+1}} \&\& {C_\gamma} \\
	\&\&\&\& {C_\gamma} \&\& P
	\arrow[""{name=0, anchor=center, inner sep=0}, "{c_\gamma}"{description}, from=2-4, to=2-6]
	\arrow["{c_\gamma}"{description}, from=2-4, to=3-5]
	\arrow[""{name=1, anchor=center, inner sep=0}, "{rd_\gamma }"', from=3-5, to=3-7]
	\arrow["{p_\gamma}", from=2-6, to=3-7]
	\arrow["h"', from=1-1, to=2-2]
	\arrow[""{name=2, anchor=center, inner sep=0}, "s"', from=2-2, to=2-4]
	\arrow[""{name=3, anchor=center, inner sep=0}, "f", from=1-1, to=1-3]
	\arrow["x", from=1-3, to=2-4]
	\arrow["{\tilde{\mu}}", shorten <=16pt, shorten >=16pt, Rightarrow, from=0, to=1]
	\arrow["\gamma", shorten <=16pt, shorten >=16pt, Rightarrow, from=3, to=2]
\end{tikzcd}\\
\begin{tikzcd}[ampersand replacement=\&]
	A \& B \&\& {X_{i+1}} \&\& {C_\gamma} \\
	\&\& {X_{i+1}} \&\& {C_\gamma} \&\& P.
	\arrow[""{name=0, anchor=center, inner sep=0}, "{c_\gamma}", from=1-4, to=1-6]
	\arrow["{c_\gamma}"{description}, from=1-4, to=2-5]
	\arrow[""{name=1, anchor=center, inner sep=0}, "{rd_\gamma }"', from=2-5, to=2-7]
	\arrow["{p_\gamma}", from=1-6, to=2-7]
	\arrow["h", from=1-1, to=1-2]
	\arrow[""{name=2, anchor=center, inner sep=0}, "{x\circ f//h}", from=1-2, to=1-4]
	\arrow["s"', from=1-2, to=2-3]
	\arrow[""{name=3, anchor=center, inner sep=0}, "{c_\gamma}"', from=2-3, to=2-5]
	\arrow["{\tilde{\mu}}", shorten <=16pt, shorten >=16pt, Rightarrow, from=0, to=1]
	\arrow["{\chi_\gamma}", shorten <=16pt, shorten >=16pt, Rightarrow, from=2, to=3]
\end{tikzcd}
\end{center}

Consequently, we may apply the 2-dimensional aspect of the universality of each bicoequinserter $c_\gamma$, obtaining a unique $2$-cell $\hat{\mu}_\gamma$ such that $\hat{\mu}_\gamma\circ c_\gamma=\tilde{\mu}$.

\item[(b$^{\prime}$)] \blue For $\gamma=\{\sigma,\tau\}$ as in (b), $c_{\gamma}$ is the bicoequifier of the two 2-cells $\sigma$ and $\tau$. Thus, the 2-dimensional aspect of its universality ensures that there is a unique
$\hat{\mu}_{\gamma}\colon p_{\gamma}\Rightarrow rd_{\gamma}$ such that
\begin{equation}\label{mu-hat}\tilde{\mu}=\hat{\mu}_{\gamma}\circ c_{\gamma}.\end{equation} 

\end{enumerate}

\blue Combining (a$^{\prime}$) and (b$^{\prime}$), \black the equality $\hat{\mu}_\gamma\circ c_\gamma=\tilde{\mu}$ holds for all $\gamma\in \Gamma$. Hence, taking into account the pasting diagram
\[\begin{tikzcd}[ampersand replacement=\&]
	\& {X_{i+2}} \\
	{C_\gamma} \&\& P
	\arrow[""{name=0, anchor=center, inner sep=0}, "{rd_\gamma}"', curve={height=12pt}, from=2-1, to=2-3]
	\arrow[""{name=1, anchor=center, inner sep=0}, "{p_\gamma}"{description}, curve={height=-12pt}, from=2-1, to=2-3]
	\arrow["{d_\gamma}", curve={height=-6pt}, from=2-1, to=1-2]
	\arrow["{p_{i+2}}", curve={height=-6pt}, from=1-2, to=2-3]
	\arrow["{\hat{\mu_\gamma}}"{xshift=0.1cm}, shorten <=6pt, shorten >=6pt, Rightarrow, from=1, to=0]
	\arrow["{\mathbf{p}_\gamma^d}"{xshift=0.1cm}, shorten <=4pt, shorten >=6pt, Rightarrow, from=1-2, to=1]
\end{tikzcd}\]
and the 2-dimensional universal property of the wide bipushout, there exists a unique \[\stackrel{\bullet}{\mu}\colon  p_{i+2} \Rightarrow r\] such that \blue $\stackrel{\bullet}{\mu}\circ d_\gamma = \hat{\mu}_\gamma\cdot \mathbf{p}^d_{\gamma}$. Consequently,  using this last equality and the equalities (\ref{eq:def-p_i+2}), (\ref{mu-barr}), (\ref{mu-tilde}) and (\ref{mu-hat}), we obtain:

$\begin{array}{rl}
  (\stackrel{\bullet}{\mu} \circ x_{0,i+2})\cdot \mathbf{p}_{0,i+2} &=(((\stackrel{\bullet}{\mu} \circ x_{i+1,i+2})\cdot \mathbf{p}_{i+1,i+2})\circ x_{0,i+1})\cdot \mathbf{p}_{0,i+1}\\
  &=(((r\circ \delta_{\gamma})\cdot ((\hat{\mu}_{\gamma}\cdot \mathbf{p}_{\gamma}^d)\circ c_{\gamma})\cdot(p_{i+2}\circ \delta_{\gamma}^{-1})\cdot \mathbf{p}_{i+1,i+2})\circ x_{0,i+1})\cdot \mathbf{p}_{0,i+1}\\
   &=(\bar{\mu}\circ x_{0,i+1})\cdot \mathbf{p}_{0,i+1}=\mu.
\end{array}
$\\
\black
The unicity of $\stackrel{\bullet}{\mu}$ is a routine check.

\end{enumerate}

Concerning the Kan injectivity of 1-cells, let $u\colon P\to Q$ be in $\LInj(\maps)$ and set $q:=up$. We want to show that $up_{i+2}\cong q_{i+2}$. \blue For each 2-cell $\gamma$ of type 2.(a), \black in diagram (\ref{diag:bar_gamma}),
put $\bar{\gamma}_P:=\bar{\gamma}$ and, analogously, use the notation $\bar{\gamma}_Q$ for the $\bar{\gamma}$ corresponding to $Q$, instead of $P$. Since $u\colon P\to Q$ preserves left Kan extensions along maps in $\maps$,  and $up_{i+1}\cong q_{i+1}$,  we have that $u\bar{\gamma}_P$ coincides with $\bar{\gamma}_Q$ up to an invertible 2-cell. Then, as pasting  diagram (\ref{eq:def-p_gamma}) with u the equation still holds, we get that  $(up_\gamma,u\mathbf{p}_\gamma^c)$ satisfies the same property that $(q_\gamma,\mathbf{q}_\gamma^c)$ does. Hence,  $up_\gamma\cong q_\gamma$. 
This equality is also verified for $\gamma=\{\sigma,\tau\}$ as in 2.(b); this follows immediately from the definition of $p_{\gamma}$ and $q_{\gamma}$ in this case. \black Since the equality holds for all $\gamma$, by the universality of the wide bipushout (\ref{diag:w-bipu0}), we conclude that $up_{i+2}\cong q_{i+2}$.
\end{proof}

\section{KZ-monadicity via the pseudochain}\label{TheTheorem}
In this section we consider a fixed 2-category $\catk$ with (weighted) bicolimits. A key requisite in the classical Small Object Argument and Orthogonal Subcategory Problem is a convenient concept of smallness for objects. Here we make use of the notion below.

\begin{definition}\label{def:small} An object $A$ is \textbf{$\lambda$-small}, for $\lambda$ an infinite regular cardinal, if the 2-functor
  $$\catk(A,-)\colon\catk\to\Cat$$
  preserves bicolimits of $\lambda$-pseudochains.

  Explicitly: For every $\lambda$-pseudochain $(X_i)_{i<\lambda}$, with bicolimit coprojections $l_i\colon X_i\to$~$L$, we have:
  \begin{enumerate}
    \item every morphism $a\colon A\to L$ factorises through some $X_i$ (up-to-iso);
    $$\xymatrix{&L\\
    A\ar[ru]^a_{\; \cong}\ar[r]_{a'}&X_i\ar[u]_{l_i}}$$
    \item for every 2-cell of the form
\[\begin{tikzcd}
	& {X_i} \\
	A && L \\
	& {X_{i'}}
	\arrow["f", from=2-1, to=1-2]
	\arrow["g"', from=2-1, to=3-2]
	\arrow["{l_i}", from=1-2, to=2-3]
	\arrow["{l_{i'}}"', from=3-2, to=2-3]
	\arrow["\alpha"{xshift=0.1cm}, shorten <=15pt, shorten >=15pt, Rightarrow, from=1-2, to=3-2]
\end{tikzcd}\]
    there is some $j\geq i,i'$ and a 2-cell $\overline{\alpha}$ such that
    \begin{center}
\begin{tikzcd}
	& {X_i} \\
	A && {X_j} & L \\
	& {X_{i'}}
	\arrow["f", from=2-1, to=1-2]
	\arrow["g"', from=2-1, to=3-2]
	\arrow[from=1-2, to=2-3]
	\arrow[from=3-2, to=2-3]
	\arrow["{\overline{\alpha}}"{xshift=0.1cm}, shorten <=15pt, shorten >=15pt, Rightarrow, from=1-2, to=3-2]
	\arrow["{l_j}"{description}, from=2-3, to=2-4]
	\arrow[""{name=0, anchor=center, inner sep=0}, "{l_i}", curve={height=-12pt}, from=1-2, to=2-4]
	\arrow[""{name=1, anchor=center, inner sep=0}, "{l_{i'}}"', curve={height=12pt}, from=3-2, to=2-4]
	\arrow["\cong"{description}, Rightarrow, draw=none, from=2-3, to=1]
	\arrow["\cong"{description}, Rightarrow, draw=none, from=2-3, to=0]
\end{tikzcd} $=\;\alpha$.
\end{center}
\item \blue For every equality of 2-cells
\[\begin{tikzcd}
	A && {X_i} & L & {=} & A && {X_{i'}} & L
	\arrow[""{name=0, anchor=center, inner sep=0}, "f", curve={height=-12pt}, from=1-1, to=1-3]
	\arrow[""{name=1, anchor=center, inner sep=0}, "g"', curve={height=12pt}, from=1-1, to=1-3]
	\arrow["{l_i}", from=1-3, to=1-4]
	\arrow["{l_{i'}}", from=1-8, to=1-9]
	\arrow[""{name=2, anchor=center, inner sep=0}, "{f'}", curve={height=-12pt}, from=1-6, to=1-8]
	\arrow[""{name=3, anchor=center, inner sep=0}, "{g'}"', curve={height=12pt}, from=1-6, to=1-8]
	\arrow["\alpha"{xshift=0.1cm}, shorten <=3pt, shorten >=3pt, Rightarrow, from=0, to=1]
	\arrow["\beta"{xshift=0.1cm}, shorten <=3pt, shorten >=3pt, Rightarrow, from=2, to=3]
\end{tikzcd}\]
there is $j\geq i,i'$ such that $x_{ij}\circ \alpha=x_{i'j}\circ \beta$:
\[\begin{tikzcd}
	A && {X_i} & {X_j} & {=} & A && {X_{i'}} & {X_j.}
	\arrow[""{name=0, anchor=center, inner sep=0}, "f", curve={height=-12pt}, from=1-1, to=1-3]
	\arrow[""{name=1, anchor=center, inner sep=0}, "g"', curve={height=12pt}, from=1-1, to=1-3]
	\arrow["{x_{ij}}", from=1-3, to=1-4]
	\arrow["{x_{i'j}}", from=1-8, to=1-9]
	\arrow[""{name=2, anchor=center, inner sep=0}, "{f'}", curve={height=-12pt}, from=1-6, to=1-8]
	\arrow[""{name=3, anchor=center, inner sep=0}, "{g'}"', curve={height=12pt}, from=1-6, to=1-8]
	\arrow["\alpha"{xshift=0.1cm}, shorten <=3pt, shorten >=3pt, Rightarrow, from=0, to=1]
	\arrow["\beta"{xshift=0.1cm}, shorten <=3pt, shorten >=3pt, Rightarrow, from=2, to=3]
\end{tikzcd}\]
\end{enumerate}
\black
An object $X$ in $\catk$ is said to be \textbf{small} if it is $\lambda$-small for some infinite regular cardinal~$\lambda$.
\end{definition}

\begin{remark} An example of $\lambda$-small object is given by the notion of $\lambda$-bipresentable object studied in great detail in \cite{dilib-osm_biacc}.
Recall that an object $A$ of $\catk$ is said to be $\mathbf{\lambda}$\textbf{-bipresentable}  if the 2-functor
  $\catk(A,-)\colon\catk\to\Cat$
  preserves filtered bicolimits in the sense of \cite[2.1.3]{dilib-osm_biacc}. Notice that in $1$-dimensional category theory the two notions collapse due to \cite[1.6]{adamek_rosicky_1994}. The 2-dimensional aspects of such a result are unknown at the current state of art. 
  
  \blue Other generalisations of the notion of presentable object have been studied in \cite{street1976limits, kelly1982structures}, we refer the interested reader to \cite[Remark 3.1.9]{dilib-osm_biacc} for a more detailed analysis.
  \end{remark}

\begin{theorem}
\label{thm:linj-kz-adj}
Let $\catk$ be a locally small 2-category with small bicolimits and such that all objects are small.  Then, for any set $\maps$ of 1-cells in $\catk$, the inclusion 2-functor
$$\slinj(\maps)\hookrightarrow\catk$$
is the right part of a KZ-adjunction. Moreover, $\slinj(\maps)$ is 2-equivalent to the corresponding Eilenberg-Moore 2-category.
\end{theorem}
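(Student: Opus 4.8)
The plan is to reduce the statement to the recognition criterion of Marmolejo and Wood recorded in Theorem~\ref{thm:marm-wood}, applied to the locally full (and locally replete) sub-2-category $\cata:=\slinj(\maps)$, and to supply its hypotheses from the Kan injective pseudochain of Construction~\ref{kansmallobject}. Concretely, for every object $X$ I run the chain $(X_i)_{i\in\ord}$ with $X_0=X$ and set
$$d_X:=x_{0\lambda}\colon X\longrightarrow X_\lambda=:DX$$
for a fixed ordinal $\lambda$ to be produced by a small object argument. Verifying that this family satisfies the three conditions of Theorem~\ref{thm:marm-wood}(1) then produces the KZ-adjunction $\slinj(\maps)\hookrightarrow\catk$; the final clause, identifying $\slinj(\maps)$ with the Eilenberg--Moore 2-category of the induced KZ-pseudomonad, is then immediate from Corollary~\ref{cor:Linj(H)}.

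For the choice of $\lambda$ I use that $\maps$ is a \emph{set} and that all objects are small: there are only set-many domains and codomains of 1-cells of $\maps$, each $\lambda_h$-small for some regular $\lambda_h$, so I fix one infinite regular cardinal $\lambda$ above the supremum of all these $\lambda_h$, making every domain and codomain of a map in $\maps$ be $\lambda$-small. The crucial step, which I expect to be the main obstacle, is the convergence lemma: for this $\lambda$ one has $X_\lambda\in\slinj(\maps)$. All three clauses of Definition~\ref{def:small} are needed. Given $h\colon A\to A'$ in $\maps$ and $f\colon A\to X_\lambda$, clause~(1) factorises $f$ up to isomorphism as $x_{i\lambda}\circ f'$ through an even stage $i<\lambda$; since the span $(f',h)$ is one of those used to form $X_{i+1}$, the 1-cell $x_{i+1,\lambda}\circ(f'//h)$ is a candidate extension whose structural 2-cell is invertible because $\tilde{\xi}^{h}_{f'}$ is the comparison isomorphism of a bipushout (this is exactly why one lands in $\slinj$ rather than merely in $\linj$). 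Promoting this candidate to a genuine left Kan extension is what the isolated steps $i+2$ are designed for: the bicoequinserters of part~(a) force the existence, and the bicoequifiers of part~(b) the uniqueness, of factorisations of 2-cells; clauses~(2) and~(3) of smallness then ensure that every competing 2-cell and every identity of 2-cells out of $A$ and $A'$ into the bicolimit $X_\lambda$ already occurs at a bounded stage $j<\lambda$, hence has already been resolved by the data adjoined at stage $j+2$. The delicate bookkeeping of the compositor isomorphisms $\compX$ across the limit is where I expect the argument to be most technical.

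With $DX=X_\lambda\in\slinj(\maps)$ established, the remaining hypotheses follow from the earlier results. The containment $\cata\subseteq\slinj(\{d_X\mid X\in\catk\})$ is precisely Lemma~\ref{lem:ext-to-P} read at $i=\lambda$; that same lemma, applied to $p_0:=f$ for a 1-cell $f\colon X\to A$ with $A\in\slinj(\maps)$, produces the left Kan extension $f/d_X\cong p_\lambda\colon DX\to A$. To obtain the second half of condition~(a), namely $f/d_X\in\slinj(\maps)$, I would run the same transfinite bookkeeping as in Lemma~\ref{lem:ext-to-P}, tracking that the induced maps $p_i\cong f/x_{0i}$ remain compatible with the adjoined Kan-extension data $f//h$, so that at the terminal stage $\lambda$ the Beck--Chevalley condition of Remark~\ref{rem:Beck-Chev} holds and $f/d_X$ preserves Kan extensions along $\maps$.

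Finally, density (condition~(b)) amounts to $d_X/d_X\cong 1_{DX}$, equivalently to condition~(b$'$) of Remark~\ref{rem:dense}. Having the reflection, I would verify it by showing that the identity cocone on $d_X$ is the universal one: applying Lemma~\ref{lem:ext-to-P} to $p_0:=d_X$ identifies the reflection pseudococone with the connecting maps, so that $x_{i\lambda}\cong x_{0\lambda}/x_{0i}$ for all $i<\lambda$; then a 2-cell $\alpha\colon d_X\Rightarrow g\,d_X$ induces, by the Kan-extension universality at each stage, a coherent family of whiskerings $x_{i\lambda}\Rightarrow g\,x_{i\lambda}$ which the 2-dimensional universal property of the bicolimit $X_\lambda$ assembles into a unique $\overline{\alpha}\colon 1_{DX}\Rightarrow g$ restricting to $\alpha$. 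Granting the three hypotheses, Theorem~\ref{thm:marm-wood}(1) yields the KZ-adjunction and Corollary~\ref{cor:Linj(H)} completes the Eilenberg--Moore identification.
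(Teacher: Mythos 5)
Your proposal is correct and follows essentially the same route as the paper's proof: the same choice of regular cardinal bounding the smallness of all (co)domains of $\maps$, the same reduction to Theorem~\ref{thm:marm-wood} with $DX:=X_\kappa$ and $d_X:=x_{0\kappa}$, the same convergence argument (factorisation through an even stage, bipushout candidate, bicoequinserters for existence and bicoequifiers for uniqueness of 2-cells, invoking clauses (2) and (3) of Definition~\ref{def:small}), the same use of Lemma~\ref{lem:ext-to-P} for the containment and for $f/d_X$, the same density argument via identifying the reflection pseudococone with the connecting maps $x_{i\kappa}$, and the same appeal to Corollary~\ref{cor:Linj(H)} for the Eilenberg--Moore identification. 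The only difference is one of detail: the paper carries out the identification $p_i\cong x_{i\kappa}$ by a separate transfinite induction over limit, $i+1$ and $i+2$ steps, which your sketch subsumes under ``applying Lemma~\ref{lem:ext-to-P}''.
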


\begin{proof}
Since $\maps$ is a set and every object of $\catk$ is small, there is some infinite regular cardinal $\kappa$ such that all domains and codomains of morphisms of $\maps$ are $\kappa$-small.


We will use Theorem \ref{thm:marm-wood}, setting $\cata:=\slinj(\maps)$, $DX:=X_\kappa$ and $d_X$ as the 1-cells $x_{0,\kappa}\colon X=X_0\to X_\kappa$, to prove that the inclusion 2-functor
$$\slinj(\maps)\hookrightarrow\catk$$
is the right part of a KZ-adjunction. In Lemma \ref{lem:ext-to-P}, we have already proved that $\slinj(\maps)$ is a sub-2-category of $\slinj(\lbrace x_{0\kappa}\mid X\in \catk\rbrace)$. Therefore, we just need to prove the following two properties:
\begin{enumerate}
    \item[(1)] For all $X\in\catk$, $X_\kappa\in\slinj(\maps)$ and, for any $p\colon X\to P$ with $P\in\slinj(\maps)$, the morphism $p/x_{0,\kappa}$ belongs to $\slinj(\maps)$.

    \item[(2)] Every $x_{0,\kappa}$ is dense (see Definition \ref{def:dense}).

\end{enumerate}

Let us prove these properties.

\begin{enumerate}
    \item[(1)] First, we will prove that $X_\kappa\in\slinj(\maps)$. Given $h\colon A\to A'\in\maps$ and $f\colon A\to X_\kappa$, since $A$ is $\kappa$-small and $X_\kappa=\bicolim_{j<\kappa}X_j$, there is some even ordinal $i$   and an invertible 2-cell $\phi^{f}_{ik}\colon f\Rightarrow  x_{i,\kappa}\circ f'$ with $f'\colon A\to X_i$.  We claim that $f/h:=x_{i+1,\kappa}\circ f'//h$ and the invertible 2-cell (see (\ref{diag:w-bipu}))
\begin{center}
    $\xi_f^h:=\quad$
\begin{tikzcd}[ampersand replacement=\&]
	A \&\& {A'} \\
	{X_i} \&\& {X_{i+1}} \\
	{X_\kappa}
	\arrow["h", from=1-1, to=1-3]
	\arrow[""{name=0, anchor=center, inner sep=0}, "{f'}"{description}, from=1-1, to=2-1]
	\arrow["{x_{i,i+1}}", from=2-1, to=2-3]
	\arrow[""{name=1, anchor=center, inner sep=0}, "{f'//h}", from=1-3, to=2-3]
	\arrow[""{name=2, anchor=center, inner sep=0}, "{x_{i+1,\kappa}}", from=2-3, to=3-1]
	\arrow[""{name=3, anchor=center, inner sep=0}, "{x_{i,\kappa}}"{description}, from=2-1, to=3-1]
	\arrow[""{name=4, anchor=center, inner sep=0}, "f"', curve={height=30pt}, from=1-1, to=3-1]
	\arrow["{\phi^{f}_{ik}}"{pos=0.6,xshift=0.1cm,yshift=0.1cm}, shorten <=5pt, shorten >=1pt, Rightarrow, from=4, to=2-1]
	\arrow["{\tilde{\xi}^h_{f'}}", shorten <=26pt, shorten >=26pt, Rightarrow, from=0, to=1]
	\arrow["{({\mathbf{x}_{i,\kappa}^{i+1}})^{-1}}", shift left=1, shorten <=12pt, shorten >=12pt, Rightarrow, from=3, to=2]
\end{tikzcd}
\end{center}
provides a left Kan extension of $f$ along $h$. To prove this, let us consider a 2-cell
\[\begin{tikzcd}
	A & {A'} \\
	& {X_\kappa.}
	\arrow["h", from=1-1, to=1-2]
	\arrow[""{name=0, anchor=center, inner sep=0}, "f"', from=1-1, to=2-2]
	\arrow[""{name=1, anchor=center, inner sep=0}, "g", from=1-2, to=2-2]
	\arrow["\beta", shorten <=5pt, shorten >=5pt, Rightarrow, from=0, to=1]
\end{tikzcd}\]
Since $A'$ is $\kappa$-small, $g$ factorises through some $X_j$, i.e. there is an ordinal $j$ and an invertible 2-cell $\phi^{g}_{jk}\colon g\Rightarrow x_{j,\kappa}g'$ with $g'\colon A'\to X_j$. Without loss of generality, we may assume that this $j$ is even and $i\leq j$. This way, we can consider the 2-cell $\beta'$ given by the pasting below.
\begin{equation}\label{eq:def-beta'}
\beta':=\quad
\begin{tikzcd}[ampersand replacement=\&]
	\& {X_i} \&\& {X_j} \\
	A \& {} \&\&\& {X_\kappa} \\
	\& {A'} \&\& {X_j}
	\arrow["h"', from=2-1, to=3-2]
	\arrow["{g'}"', from=3-2, to=3-4]
	\arrow["{x_{j,\kappa}}"', from=3-4, to=2-5]
	\arrow["{f'}", from=2-1, to=1-2]
	\arrow["{x_{i,j}}", from=1-2, to=1-4]
	\arrow["{x_{j,\kappa}}", from=1-4, to=2-5]
	\arrow[""{name=0, anchor=center, inner sep=0}, "g"{description}, curve={height=-6pt}, from=3-2, to=2-5]
	\arrow["f"{description}, from=2-1, to=2-5]
	\arrow["\beta"{xshift=0.1cm}, shorten <=4pt, shorten >=6pt, Rightarrow, from=2-2, to=3-2]
	\arrow[""{name=1, anchor=center, inner sep=0}, "{x_{i\kappa}}"{description}, curve={height=6pt}, from=1-2, to=2-5]
	\arrow["{(\phi^{f}_{ik})^{-1}}"{xshift=0.1cm}, shorten <=7pt, shorten >=7pt, Rightarrow, from=1-2, to=2-2]
	\arrow["{\phi^{g}_{jk}}"{pos=0.8}, shorten <=7pt, shorten >=4pt, Rightarrow, from=0, to=3-4]
	\arrow["{\mathbf{x}^j_{i\kappa}}"', shorten <=1pt, shorten >=8pt, Rightarrow, from=1-4, to=1]
\end{tikzcd}
\end{equation}
Therefore, since $A$ is $\kappa$-small, there is some $m\geq j$ (which we may assume even) and a 2-cell $\overline{\beta}$ such that (see part 2 of Definition \ref{def:small})
\begin{center}
    $\beta'=$
\begin{tikzcd}[ampersand replacement=\&]
	\& {X_i} \&\& {X_j} \\
	A \&\&\&\& {X_{m+1}} \& {X_\kappa} \\
	\& {A'} \& {X_j}
	\arrow[""{name=0, anchor=center, inner sep=0}, "{x_{i,j}}", from=1-2, to=1-4]
	\arrow["{x_{j,m+1}}"{description}, from=3-3, to=2-5]
	\arrow["{x_{j,m+1}}"{description, pos=0.4}, curve={height=-6pt}, from=1-4, to=2-5]
	\arrow[""{name=1, anchor=center, inner sep=0}, "{g'}"', from=3-2, to=3-3]
	\arrow["h"', from=2-1, to=3-2]
	\arrow["{f'}", from=2-1, to=1-2]
	\arrow["x"{description}, from=2-5, to=2-6]
	\arrow[""{name=2, anchor=center, inner sep=0}, "{x_{j,\kappa}}", curve={height=-12pt}, from=1-4, to=2-6]
	\arrow[""{name=3, anchor=center, inner sep=0}, "{x_{j,\kappa}}"', curve={height=12pt}, from=3-3, to=2-6]
	\arrow[""{name=4, anchor=center, inner sep=0}, "x"{description}, from=1-2, to=2-5]
	\arrow["{\mathbf{x}^{-1}}"{xshift=0.05cm,yshift=0.1cm}, shorten <=7pt, shorten >=4pt, Rightarrow, from=2, to=2-5]
	\arrow["{\mathbf{x}}"{pos=0.4,xshift=0.05cm,yshift=0.1cm}, shorten <=5pt, shorten >=8pt, Rightarrow, from=2-5, to=3]
	\arrow["{\overline{\beta}}"{xshift=0.1cm,yshift=0.2cm}, shorten <=30pt, shorten >=25pt, Rightarrow, from=0, to=1]
	\arrow["{\mathbf{x}}"{pos=0.2}, shorten <=3pt, shorten >=5pt, Rightarrow, from=1-4, to=4]
\end{tikzcd}
\end{center}
The 2-cell $\overline{\beta}$
is of the form of the 2-cells $\gamma$  considered in {\blue 2.(a) of Construction \ref{kansmallobject} for obtaining $X_{m+2}$}, so let us consider the bicoequinserter associated to it, which we denote with
\[\begin{tikzcd}[ampersand replacement=\&]
	\& {X_{i+1}} \& {X_{m+1}} \\
	{A'} \&\&\& {C_{\overline{\beta}}} \\
	\& {X_j} \& {X_{m+1}}
	\arrow["{c_{\overline{\beta}}}", from=1-3, to=2-4]
	\arrow["{c_{\overline{\beta}}}"', from=3-3, to=2-4]
	\arrow["{f//h}", from=2-1, to=1-2]
	\arrow["{g'}"', from=2-1, to=3-2]
	\arrow[""{name=0, anchor=center, inner sep=0}, "{x_{j,m+1}}"'{yshift=-0.1cm}, from=3-2, to=3-3]
	\arrow[""{name=1, anchor=center, inner sep=0}, "{x_{i+1,m+1}}"{yshift=0.1cm}, from=1-2, to=1-3]
	\arrow["{\chi_{\overline{\beta}}}"{xshift=0.2cm}, shorten <=25pt, shorten >=25pt, Rightarrow, from=1, to=0]
\end{tikzcd}\]
We set $\tilde{\beta}:=(\phi_{j\kappa}^g)^{-1}\cdot\beta''\colon x_{i+1,\kappa}\circ f'//h\Rightarrow g$ where we define $\beta''$ as the pasting below. 
\[\beta'':=\quad\begin{tikzcd}[ampersand replacement=\&]
	\& {X_{i+1}} \& {X_{m+1}} \\
	{A'} \&\&\& {C_{\overline{\beta}}} \& {X_{m+2}} \& {X_\kappa} \\
	\& {X_j} \& {X_{m+1}}
	\arrow["{c_{\overline{\beta}}}"{description}, from=1-3, to=2-4]
	\arrow["{c_{\overline{\beta}}}"{description}, from=3-3, to=2-4]
	\arrow["{f'//h}", from=2-1, to=1-2]
	\arrow["{g'}"', from=2-1, to=3-2]
	\arrow[""{name=0, anchor=center, inner sep=0}, "x", from=3-2, to=3-3]
	\arrow[""{name=1, anchor=center, inner sep=0}, "x"', from=1-2, to=1-3]
	\arrow["{d_{\overline{\beta}}}"{description}, from=2-4, to=2-5]
	\arrow["x"{description}, from=2-5, to=2-6]
	\arrow[""{name=2, anchor=center, inner sep=0}, "x"{description}, curve={height=-12pt}, from=1-3, to=2-5]
	\arrow[""{name=3, anchor=center, inner sep=0}, "x"{description}, curve={height=12pt}, from=3-3, to=2-5]
	\arrow[""{name=4, anchor=center, inner sep=0}, "{x_{i+1,\kappa}}", curve={height=-50pt}, from=1-2, to=2-6]
	\arrow[""{name=5, anchor=center, inner sep=0}, "{x_{j,\kappa}}"', curve={height=50pt}, from=3-2, to=2-6]
	\arrow["{\delta_{\overline{\beta}}}"{yshift=-0.1cm,xshift=0.1cm}, shorten <=4pt, shorten >=6pt, Rightarrow, from=2-4, to=3]
	\arrow["{{\delta_{\overline{\beta}}}^{-1}}"{yshift=0.1cm,xshift=0.1cm}, shorten <=6pt, shorten >=4pt, Rightarrow, from=2, to=2-4]
	\arrow["{\mathbf{x}^{-1}}"{yshift=0.3cm,xshift=0.1cm}, shorten <=7pt, shorten >=7pt, Rightarrow, from=4, to=2]
	\arrow["{\mathbf{x}}"{xshift=0.2cm,yshift=0.1cm}, shorten <=7pt, shorten >=7pt, Rightarrow, from=3, to=5]
	\arrow["{\chi_{\overline{\beta}}}"{xshift=0.2cm}, shorten <=25pt, shorten >=25pt, Rightarrow, from=1, to=0]
\end{tikzcd}\]




\blue Now, we need to prove that $\tilde{\beta}$ is the only 2-cell such that $(\tilde{\beta}\circ h)\cdot \xi_f^h=\beta$; to see the last equality, we equivalently show that $\beta''$ satisfies the following equation:
\begin{equation}
\label{eq:X_k-kan-ext}
\scalebox{0.9}{
\begin{tikzcd}[ampersand replacement=\&]
	A \& {A'} \\
	{X_i} \& {X_{i+1}} \& {X_j} \\
	\& {X_\kappa}
	\arrow[""{name=0, anchor=center, inner sep=0}, "{f'//h}"{description}, from=1-2, to=2-2]
	\arrow[""{name=1, anchor=center, inner sep=0}, "x"{description}, from=2-2, to=3-2]
	\arrow["h", from=1-1, to=1-2]
	\arrow[""{name=2, anchor=center, inner sep=0}, "f'"', from=1-1, to=2-1]
	\arrow[from=2-1, to=2-2]
	\arrow[""{name=3, anchor=center, inner sep=0}, "{x_{i,\kappa}}"', from=2-1, to=3-2]
	\arrow["{g'}", curve={height=-6pt}, from=1-2, to=2-3]
	\arrow["{x_{j,\kappa}}", curve={height=-6pt}, from=2-3, to=3-2]
	\arrow["{\beta''}", shift right=2, shorten <=6pt, shorten >=6pt, Rightarrow, from=2-2, to=2-3]
	\arrow["{\mathbf{x}^{-1}}"{yshift=0.1cm}, shorten <=7pt, shorten >=7pt, Rightarrow, from=3, to=1]
	\arrow["{\tilde{\xi}_{f'}^h}",shift right=1, shorten <=17pt, shorten >=17pt, Rightarrow, from=2, to=0]
\end{tikzcd}}\quad=\quad
\scalebox{0.9}{
\begin{tikzcd}[ampersand replacement=\&]
	A \& {A'} \\
	\&\& {X_j} \\
	{X_i} \& {X_\kappa}
	\arrow[""{name=0, anchor=center, inner sep=0}, "f"{description, pos=0.4}, from=1-1, to=3-2]
	\arrow["h", from=1-1, to=1-2]
	\arrow[""{name=1, anchor=center, inner sep=0}, "g"{description, pos=0.6}, from=1-2, to=3-2]
	\arrow["{g'}", from=1-2, to=2-3]
	\arrow["{x_{j,\kappa}}", from=2-3, to=3-2]
	\arrow[""{name=2, anchor=center, inner sep=0}, "{f'}"', from=1-1, to=3-1]
	\arrow["{x_{i,\kappa}}"', from=3-1, to=3-2]
	\arrow["{{(\phi_{i\kappa}^f})^{-1}}"'{pos=0.7,yshift=-0.1cm}, shift right=4, shorten <=6pt, shorten >=6pt, Rightarrow, from=2, to=0]
	\arrow["\beta", shift left=3, shorten <=6pt, shorten >=6pt, Rightarrow, from=0, to=1]
	\arrow["{{\phi_{j\kappa}^g}}"{yshift=0.1cm}, shift right=1, shorten <=10pt, shorten >=10pt, Rightarrow, from=1, to=2-3]
\end{tikzcd}}
\end{equation}
Starting from the left hand side above, \black we get, by successively using the definitions of $\chi_{\overline{\beta}}$ and $\overline{\beta}$:
\begin{center}
\adjustbox{scale=0.9}{
\begin{tikzcd}[ampersand replacement=\&]
	\& {X_i} \& {X_{i+1}} \& {X_{m+1}} \\
	A \& {A'} \&\&\& {C_{\overline{\beta}}} \& {X_{m+2}} \& {X_\kappa} \\
	\&\& {X_j} \& {X_{m+1}}
	\arrow["{c_{\overline{\beta}}}"{description}, from=1-4, to=2-5]
	\arrow["{c_{\overline{\beta}}}"{description}, from=3-4, to=2-5]
	\arrow["{f//h}"{description}, from=2-2, to=1-3]
	\arrow["{g'}"', from=2-2, to=3-3]
	\arrow[""{name=0, anchor=center, inner sep=0}, from=3-3, to=3-4]
	\arrow[""{name=1, anchor=center, inner sep=0}, "{x_{i+1,m+1}}", from=1-3, to=1-4]
	\arrow["{d_{\overline{\beta}}}"{description}, from=2-5, to=2-6]
	\arrow["x"{description}, from=2-6, to=2-7]
	\arrow[""{name=2, anchor=center, inner sep=0}, "{x_{m+1,m+2}}"{description}, curve={height=-12pt}, from=1-4, to=2-6]
	\arrow[""{name=3, anchor=center, inner sep=0}, "{x_{m+1,m+2}}"{description}, curve={height=12pt}, from=3-4, to=2-6]
	\arrow["h"', from=2-1, to=2-2]
	\arrow["{f'}", from=2-1, to=1-2]
	\arrow["{x}", from=1-2, to=1-3]
	\arrow["{\tilde{\xi}_f^h}"', shift left=2, shorten <=4pt, shorten >=4pt, Rightarrow, from=1-2, to=2-2]
	\arrow[""{name=4, anchor=center, inner sep=0}, curve={height=50pt}, from=3-3, to=2-7]
	\arrow[""{name=5, anchor=center, inner sep=0}, "{x_{i,\kappa}}", curve={height=-70pt}, from=1-2, to=2-7]
	\arrow["{\delta_{\overline{\beta}}}", shorten <=6pt, shorten >=4pt, Rightarrow, from=2, to=2-5]
	\arrow["{{\delta_{\overline{\beta}}}^{-1}}", shorten <=4pt, shorten >=6pt, Rightarrow, from=2-5, to=3]
	\arrow["{\chi_{\overline{\beta}}}", shorten <=25pt, shorten >=25pt, Rightarrow, from=1, to=0]
	\arrow["{\mathbf{x}^{-1}}", shorten <=7pt, shorten >=7pt, Rightarrow, from=5, to=1-4]
	\arrow["{\mathbf{x}}", shorten <=7pt, shorten >=7pt, Rightarrow, from=3, to=4]
\end{tikzcd}}
{\large $=$}  \\
\adjustbox{scale=0.9}{
\begin{tikzcd}[ampersand replacement=\&]
	\& {X_i} \& {X_{i+1}} \\
	A \&\&\& {X_{m+1}} \& {C_{\overline{\beta}}} \& {X_{m+2}} \& {X_\kappa} \\
	\& {A'} \& {X_j}
	\arrow["{c_{\overline{\beta}}}"{description}, from=2-4, to=2-5]
	\arrow[""{name=0, anchor=center, inner sep=0}, "{g'}"', from=3-2, to=3-3]
	\arrow["{x_{i+1,m+1}}"{description}, from=1-3, to=2-4]
	\arrow["{d_{\overline{\beta}}}"{description}, from=2-5, to=2-6]
	\arrow["x"{description}, from=2-6, to=2-7]
	\arrow[""{name=1, anchor=center, inner sep=0}, "x", curve={height=-30pt}, from=2-4, to=2-6]
	\arrow["h"', from=2-1, to=3-2]
	\arrow["{f'}", from=2-1, to=1-2]
	\arrow[""{name=2, anchor=center, inner sep=0}, "x", from=1-2, to=1-3]
	\arrow[""{name=3, anchor=center, inner sep=0}, "{x_{j,\kappa}}"', curve={height=40pt}, from=3-3, to=2-7]
	\arrow[""{name=4, anchor=center, inner sep=0}, "{x_{i,\kappa}}", curve={height=-50pt}, from=1-2, to=2-7]
	\arrow[""{name=5, anchor=center, inner sep=0}, "{x_{m+1,m+2}}"{description}, curve={height=30pt}, from=2-4, to=2-6]
	\arrow["{x_{j,m+1}}"{description}, from=3-3, to=2-4]
	\arrow["{\delta_{\overline{\beta}}}"{xshift=0.1cm,yshift=-0.1cm}, shorten <=4pt, shorten >=1pt, Rightarrow, from=1, to=2-5]
	\arrow["{\mathbf{x}^{-1}}"{pos=0.3}, shorten <=12pt, shorten >=20pt, Rightarrow, from=4, to=2-4]
	\arrow["{{\delta_{\overline{\beta}}}^{-1}}"{pos=0.3,yshift=0.1cm,xshift=0.1cm}, shorten <=1pt, shorten >=4pt, Rightarrow, from=2-5, to=5]
	\arrow["{\mathbf{x}}"{yshift=-0.2cm,xshift=0.1cm}, shorten <=8pt, shorten >=8pt, Rightarrow, from=5, to=3]
	\arrow["{\overline{\beta}}"{xshift=0.2cm}, shorten <=25pt, shorten >=25pt, Rightarrow, from=2, to=0]
\end{tikzcd}}
{\large $=$}  \\
\adjustbox{scale=0.9}{
\begin{tikzcd}[ampersand replacement=\&]
	\& {X_i} \& {X_{i+1}} \\
	A \&\&\& {X_{m+1}} \&\& {X_\kappa} \\
	\& {A'} \& {X_j}
	\arrow[""{name=0, anchor=center, inner sep=0}, "{g'}"', from=3-2, to=3-3]
	\arrow["{x_{i+1,m+1}}"{description}, from=1-3, to=2-4]
	\arrow["h"', from=2-1, to=3-2]
	\arrow["{f'}", from=2-1, to=1-2]
	\arrow[""{name=1, anchor=center, inner sep=0}, "x"{description}, from=1-2, to=1-3]
	\arrow[""{name=2, anchor=center, inner sep=0}, "{x_{j,\kappa}}"', curve={height=12pt}, from=3-3, to=2-6]
	\arrow[""{name=3, anchor=center, inner sep=0}, "{x_{i,\kappa}}", curve={height=-30pt}, from=1-2, to=2-6]
	\arrow["{x_{j,m+1}}"{description}, from=3-3, to=2-4]
	\arrow["{x_{m+1,\kappa}}"{description}, from=2-4, to=2-6]
	\arrow["{\mathbf{x}^{-1}}"{xshift=0.1cm,yshift=-0.2cm}, shorten <=8pt, shorten >=8pt, Rightarrow, from=3, to=2-4]
	\arrow["{\mathbf{x}}", shorten <=7pt, shorten >=7pt, Rightarrow, from=2-4, to=2]
	\arrow["{\overline{\beta}}"{xshift=0.2cm}, shorten <=25pt, shorten >=25pt, Rightarrow, from=1, to=0]
\end{tikzcd} }  {\large $=$} $\;\beta'\cdot((\mathbf{x}_{i,\kappa}^j)^{-1}\circ f').$
\end{center}
\blue
Using the definition of $\beta'$ \eqref{eq:def-beta'}, we see $\beta'\cdot(\mathbf{x}_{i,\kappa}^j\circ f')$ is exactly the right hand side of the equation \eqref{eq:X_k-kan-ext}. \black

\blue To conclude that $(\xi^h_f,x_{i+1,\kappa}\circ f'//h)$ is a Kan extension, it remains to prove that $\tilde{\beta}$ is the unique 2-cell satisfying the equality $(\tilde{\beta}\circ h)\cdot \xi_f^h=\beta$; this is to say that, given two 2-cells 
\[\adjustbox{scale=0.9}{\begin{tikzcd}
	&& {X_{i+1}} \\
	{A'} & {} &&& {X_\kappa} \\
	&& {X_j}
	\arrow[""{name=0, anchor=center, inner sep=0}, "{x_{i+1,\kappa}}", curve={height=-12pt}, from=1-3, to=2-5]
	\arrow[""{name=1, anchor=center, inner sep=0}, "{f'//h}", curve={height=-12pt}, from=2-1, to=1-3]
	\arrow[""{name=2, anchor=center, inner sep=0}, "{g'}"', curve={height=12pt}, from=2-1, to=3-3]
	\arrow[""{name=3, anchor=center, inner sep=0}, "{x_{j\kappa}}"', curve={height=12pt}, from=3-3, to=2-5]
	\arrow["{\sigma_1}"{xshift=0.1cm}, shift left=5, shorten <=10pt, shorten >=10pt, Rightarrow, from=1, to=2]
	\arrow["{\sigma_2}"'{xshift=-0.1cm}, shift right=5, shorten <=10pt, shorten >=10pt, Rightarrow, from=0, to=3]
\end{tikzcd}}\]
such that 
\begin{equation}\label{eq:**}(\sigma_1\circ h)\cdot \xi^h_f=(\sigma_2\circ h)\cdot \xi^h_f
\end{equation}
then $\sigma_1=\sigma_2$. 
Indeed, with the two 2-cells $\sigma_1$ and $\sigma_2$ as above, since $A'$ is $\kappa$-small, by part 2 of Definition \ref{def:small}, there is  some $n\geq j, i+1$ in $\kappa$ and 2-cells $\bar{\sigma}_1,\, \bar{\sigma}_2\colon x_{i+1, n}\circ f'//h\Rightarrow x_{jn}\circ g'$ such that each $\sigma_r$ is just the composition of $x_{kn}$ with $\bar{\sigma}_r$, up to isomorphism:
\begin{equation}\label{eq:sigmas}\sigma_r=(\mathbf{x}_{j\kappa}^n\circ g')\cdot (x_{n\kappa}\circ \bar{\sigma}_r)\cdot ((\mathbf{x}_{i+1,\kappa}^n)^{-1}\circ (f'//h)),\quad r=1,2.
\end{equation}
Hence, the equality (\ref{eq:**}) and the $\kappa$-smallness of $A$ imply that, using part 3 of Definition~\ref{def:small}, there is an even $p$ between $n$ and $\kappa$ such that
$$x_{n,p+1}\circ \bar{\sigma}_1\circ h=x_{n,p+1}\circ \bar{\sigma}_2\circ h.$$
The two 2-cells $x_{n,p+1}\circ \bar{\sigma}_r$, $r=1,2$, are of the type considered in 2.(b) of Construction \ref{kansmallobject}. Thus, by the definition of the connecting map $x_{p+1, p+2}$ (recall that $x_{p+1, p+2}\cong d_{\gamma}c_{\gamma}$, for $c_{\gamma}$ the bicoequifier of the two 2-cells), we have that $x_{p+1, p+2}\circ x_{n,p+1}\circ \bar{\sigma}_1=x_{p+1, p+2}\circ x_{n,p+1}\circ \bar{\sigma}_2$. Therefore, using (\ref{eq:sigmas}), we obtain $\sigma_1=\sigma_2$.
\black

\vskip1.5mm

    Second, let us consider a 1-cell $p\colon X_0\to P$ with $P\in\slinj(\maps)$. We know that $p$ gives rise to a pseudococone $p_i\colon X_i\to P$ satisfying the conditions in Lemma \ref{lem:ext-to-P}. Now, we want to show that the morphism $p_\kappa\colon X_\kappa\to P$ belongs to $\slinj(\maps)$, i.e. for every $f\colon A\to X_\kappa$ and $h\colon A\to A'\in\maps$
    $$p_\kappa \circ f/h\cong (p_\kappa f)/h.$$
    Since $A$ is $\kappa$-small, there is some even ordinal $i$ and $f'\colon A\to X_i$ such that $f\cong x_{i,\kappa}f'$. From the first part, we know that $f/h\cong x_{i+1,\kappa}\circ f'//h$. Therefore,
\begin{align*}
    p_\kappa\circ f/h
    &  \cong p_\kappa \circ x_{i+1,\kappa} \circ f'//h
    & 
    \\
    & \cong p_{i+1}\circ f'//h
    & (\textrm{by pseudococone condition})
    \\
    & \cong (p_if')/h
    & (\textrm{by construction of $p_{i+1}$, see diagram (\ref{diag:xi_pif})}) 
    \\
    & \cong (p_\kappa x_{i,\kappa}f')/h
    & (\textrm{by pseudococone condition})
    \\
    & \cong (p_\kappa f)/h
    & (\textrm{by }f\cong x_{i,\kappa}f').
\end{align*}

\item[(2)] Let us now consider $X_\kappa$, which is in $\slinj(\maps)$ as we proved in the previous point. Setting $p_0:=x_{0,\kappa}$ and $P:=X_\kappa$, by Lemma~\ref{lem:ext-to-P}, we get a pseudococone $p_i\colon X_i\to X_\kappa$ with $p_i\cong p_0/x_{0,i}$. We will show that, for any $i<\kappa$, \blue we can choose $p_i:=x_{i,\kappa}$ in the construction of Lemma~\ref{lem:ext-to-P}, \black which implies that $p_\kappa\cong 1_{X_\kappa}$ and $1_{X_\kappa}\cong x_{0,\kappa}/x_{0,\kappa}$. As usual, we will proceed inductively.

\vskip2mm

\noindent  \emph{Limit step.} It follows directly by bicolimit properties.

\vskip2mm

\noindent  \emph{Step $i+1$.}
\blue We recall that by construction, $p_{i+1}$ is the unique (up-to-iso) 1-cell equipped with invertible 2-cells $\mathbf{p}_{i,i+1}\colon p_i\to p_{i+1}x_{i,i+1}$ and $\pi\colon p_{i+1}\circ f//h \Rightarrow (p_if)/h$ \black such that
\[\begin{tikzcd}[ampersand replacement=\&]
	\& A \& {A'} \\
	{\xi_{p_if}^h=} \& {X_i} \& {X_{i+1}} \\
	\&\&\& {P=X_\kappa .}
	\arrow["h", from=1-2, to=1-3]
	\arrow[""{name=0, anchor=center, inner sep=0}, "{f//h}"{description}, from=1-3, to=2-3]
	\arrow[""{name=1, anchor=center, inner sep=0}, "{p_{i+1}}"{description}, from=2-3, to=3-4]
	\arrow[""{name=2, anchor=center, inner sep=0}, "f"', from=1-2, to=2-2]
	\arrow["x"{description}, from=2-2, to=2-3]
	\arrow[""{name=3, anchor=center, inner sep=0}, "{p_i}"', curve={height=12pt}, from=2-2, to=3-4]
	\arrow[""{name=4, anchor=center, inner sep=0}, "{(p_if)/h}", curve={height=-12pt}, from=1-3, to=3-4]
	\arrow["{\tilde{\xi}_f^h}", shift right=2, shorten <=15pt, shorten >=15pt, Rightarrow, from=2, to=0]
	\arrow["\pi", shorten <=8pt, shorten >=8pt, Rightarrow, from=2-3, to=4]
	\arrow["{\mathbf{p}_{i,i+1}}"{pos=0.3,xshift=0.2cm,yshift=0.1cm}, shorten <=11pt, shorten >=11pt, Rightarrow, from=3, to=1]
\end{tikzcd}\]
    Now, we will prove that $x_{i+1,\kappa}$ has the same universal property of $p_{i+1}$. Let us recall that in point (1) we proved that $x_{i+1,\kappa}\circ f//h$ together with the 2-cell below is a Kan extension of $x_{i\kappa}f$ along $h$.
\[\begin{tikzcd}[ampersand replacement=\&]
	\& A \& {A'} \\
	{\xi_{x_{i,\kappa}f}^h=} \& {X_i} \& {X_{i+1}} \\
	\&\&\& {X_\kappa}
	\arrow["h", from=1-2, to=1-3]
	\arrow[""{name=0, anchor=center, inner sep=0}, "{f//h}"{description}, from=1-3, to=2-3]
	\arrow[""{name=1, anchor=center, inner sep=0}, "{x_{i+1,\kappa}}"{description}, from=2-3, to=3-4]
	\arrow[""{name=2, anchor=center, inner sep=0}, "f"', from=1-2, to=2-2]
	\arrow["x"{description}, from=2-2, to=2-3]
	\arrow[""{name=3, anchor=center, inner sep=0}, "{x_{i,\kappa}}"', curve={height=12pt}, from=2-2, to=3-4]
	\arrow[""{name=4, anchor=center, inner sep=0}, "{(x_{i,\kappa}f)/h}", curve={height=-12pt}, from=1-3, to=3-4]
	\arrow["{\tilde{\xi}_f^h}", shift right=2, shorten <=15pt, shorten >=15pt, Rightarrow, from=2, to=0]
	\arrow["{\mathbf{x}^{-1}}"{pos=0.3}, shift left=2, shorten <=6pt, shorten >=12pt, Rightarrow, from=3, to=1]
	\arrow["{=}"{description}, draw=none, from=4, to=2-3]
\end{tikzcd}\]
    Since by inductive hypothesis we have \blue $p_i=x_{i,\kappa}$, we can set $\mathbf{p}_{i,i+1}:=(\mathbf{x}_{i,\kappa}^{i+1})^{-1}$ and $\pi=1$ and the diagram above shows that \black $x_{i+1,\kappa}$ satisfies the same universal property as $p_{i+1}$.

\vskip2mm

\noindent \emph{Step $i+2$.} We need to show that \blue following the construction of Lemma~\ref{lem:ext-to-P} we can choose \black $p_{i+2}:=x_{i+2,\kappa}$. \blue We recall that $p_{i+2}$ is defined through the universal property described in \eqref{eq:def-p_i+2}. By inductive hypothesis we have $p_{i+1}=x_{i+1,\kappa}$ and if we show that $(\,x_{i+2,\kappa}d_\gamma,\,(\mathbf{x}_{i+1,\kappa}^{i+2})^{-1}\cdot (x_{i+2,\kappa}\circ\delta_\gamma)\,)$ has the universal property of $(p_\gamma,\,\mathbf{p}_\gamma^c)$, then we can assume $p_\gamma=x_{i+2,\kappa}d_{\gamma}$ and in particular
\begin{center}
\begin{tikzcd}[ampersand replacement=\&]
	{X_{i+1}} \\
	{C_\gamma} \& {X_{i+2}} \& {X_\kappa.}
	\arrow["{c_\gamma}"', from=1-1, to=2-1]
	\arrow[""{name=0, anchor=center, inner sep=0}, "{x_{i+1,\kappa}}", curve={height=-12pt}, from=1-1, to=2-3]
	\arrow["{d_\gamma}"', from=2-1, to=2-2]
	\arrow["x"', from=2-2, to=2-3]
	\arrow["{\mathbf{p}_\gamma^c}"'{pos=0.6,yshift=0.1cm,xshift=0.1cm}, shift left=2, shorten <=12pt, shorten >=20pt, Rightarrow, from=0, to=2-1]
\end{tikzcd} $\quad=\quad$
\begin{tikzcd}[ampersand replacement=\&]
	{X_{i+1}} \\
	{C_\gamma} \& {X_{i+2}} \& {X_\kappa.}
	\arrow["{c_\gamma}"', from=1-1, to=2-1]
	\arrow[""{name=0, anchor=center, inner sep=0}, "{x_{i+1,\kappa}}", curve={height=-12pt}, from=1-1, to=2-3]
	\arrow["{d_\gamma}"', from=2-1, to=2-2]
	\arrow["x"', from=2-2, to=2-3]
	\arrow[""{name=1, anchor=center, inner sep=0}, "x"{description}, curve={height=-6pt}, from=1-1, to=2-2]
	\arrow["\delta\gamma"', shift left=2, shorten <=8pt, shorten >=5pt, Rightarrow, from=1, to=2-1]
	\arrow["{\mathbf{x}^{-1}}"{pos=0.6}, shorten <=5pt, shorten >=5pt, Rightarrow, from=0, to=2-2]
\end{tikzcd}
\end{center}
This would show that $(x_{i+2,\kappa},(\mathbf{x}_{i+1,\kappa}^{i+2})^{-1},1)$ has the same universal property as $(p_{i+2},\mathbf{p}_{i+1,\kappa},\mathbf{p}_\gamma^d)$ and so we could conclude that we can choose $p_{i+2}=x_{i+2,\kappa}$. 

{\blue We consider the two cases for 
$\gamma$,  one leading to a bicoequinserter, the other to a bicoequifier. For the first case,} let us now show that $$(\,x_{i+2,\kappa}d_\gamma,\,(\mathbf{x}_{i+1,\kappa}^{i+2})^{-1}\cdot (x_{i+2,\kappa}\circ\delta_\gamma)\,)$$ satisfy the wanted property, i.e. \eqref{eq:def-p_gamma}. This equation becomes the one below, which is true by definition of $X_{i+2}$ and pseudofunctoriality of the pseudochain. 
\begin{center}
\scalebox{0.8}{
\begin{tikzcd}[ampersand replacement=\&]
	\& {X_{j+1}} \& {X_{i+1}} \\
	{A'} \&\&\& {C_\gamma} \&\& {X_{i+2}} \& {X_\kappa} \\
	\&\& {X_{i+1}}
	\arrow["s"', from=2-1, to=3-3]
	\arrow["{f//h}", from=2-1, to=1-2]
	\arrow["{x_{j+1,i+1}}", from=1-2, to=1-3]
	\arrow["{c_\gamma}"', from=1-3, to=2-4]
	\arrow["{c_\gamma}"{description}, from=3-3, to=2-4]
	\arrow[""{name=0, anchor=center, inner sep=0}, "{p_{i+1}=x_{i+1,\kappa}}"{pos=0.8}, curve={height=-18pt}, from=1-3, to=2-7]
	\arrow["{\chi_\gamma}", shorten <=25pt, shorten >=25pt, Rightarrow, from=1-2, to=3-3]
	\arrow[""{name=1, anchor=center, inner sep=0}, "{d_\gamma}"{description}, from=2-4, to=2-6]
	\arrow["x"{description}, from=2-6, to=2-7]
	\arrow[""{name=2, anchor=center, inner sep=0}, "x"{description}, curve={height=-6pt}, from=1-3, to=2-6]
	\arrow[""{name=3, anchor=center, inner sep=0}, "x"{description}, curve={height=6pt}, from=3-3, to=2-6]
	\arrow[""{name=4, anchor=center, inner sep=0}, "{p_{i+1}=x_{i+1,\kappa}}"'{pos=0.8}, curve={height=24pt}, from=3-3, to=2-7]
	\arrow["{\delta_\gamma}"'{pos=0.2}, shift right=1, shorten <=9pt, shorten >=9pt, Rightarrow, from=2, to=1]
	\arrow["{\mathbf{x}^{-1}}"{pos=0.8,xshift=0.05cm,yshift=-0.1cm}, shorten <=13pt, shorten >=8pt, Rightarrow, from=0, to=2-6]
	\arrow["{\mathbf{x}}"{xshift=0.1cm}, shorten <=9pt, shorten >=9pt, Rightarrow, from=3, to=4]
	\arrow["{{\delta_\gamma}^{-1}}", shorten <=2pt, shorten >=4pt, Rightarrow, from=2-4, to=3]
\end{tikzcd} 
= 
\begin{tikzcd}[ampersand replacement=\&]
	\& {X_{j+1}} \& {X_{i+1}} \\
	{A'} \&\&\& {X_\kappa} \\
	\& {X_{i+1}}
	\arrow["{x_{j+1,i+1}}", from=1-2, to=1-3]
	\arrow["{p_{i+1}}", curve={height=-6pt}, from=1-3, to=2-4]
	\arrow[""{name=0, anchor=center, inner sep=0}, "{x_{j+1,\kappa}}"', curve={height=6pt}, from=1-2, to=2-4]
	\arrow["{x_{i+1,\kappa}}"', from=3-2, to=2-4]
	\arrow["s"', from=2-1, to=3-2]
	\arrow["{f//h}", from=2-1, to=1-2]
	\arrow["{\overline{\gamma}}", shorten <=15pt, shorten >=15pt, Rightarrow, from=1-2, to=3-2]
	\arrow["{\mathbf{x}}"{yshift=0.1cm,xshift=0.1cm}, shift right=1, shorten <=4pt, shorten >=4pt, Rightarrow, from=1-3, to=0]
\end{tikzcd}}
\end{center}
\black 
\blue Regarding the second case, with $\gamma=\{\sigma, \tau\}$ as in 2.(b) of Construction \ref{kansmallobject}, the universal property of $(p_{\gamma}, \mathbf{p}_{\gamma}^c)$ is
 given by diagram (\ref{eq-coeq}), and it is clear that $(x_{i+2,\kappa},(\mathbf{x}_{i+1,\kappa}^{i+2})^{-1})$ also satisfies the same property.

\end{enumerate}
\black 
This concludes the proof of the first part of the theorem, i.e. the inclusion 2-functor $\slinj(\maps)\hookrightarrow\catk$ is the right part of a KZ-adjunction.

\vskip2mm

Finally, it follows from Corollary \ref{cor:Linj(H)} that $\slinj(\maps)$ is the corresponding 2-category of pseudoalgebras of the induced KZ-pseudomonad.
\end{proof}

 \section{Lex colimits, distributive laws and Kan injectivity} \label{sec5}

In this section we are given a  pseudomonad
$(S,s,m)$ and a KZ-pseudomonad
$(T,t,n)$ on a $2$-category $\mathcal{K}$ with weighted bicolimits and a pseudodistributive law 
\[d\colon ST \Rightarrow TS.\]

\blue
The aim of this section will be to study and compare some natural Kan injectivity classes that arise in this context, motivated mostly by the theory of KZ-pseudomonads. \black 

For the theory of pseudodistributive laws over KZ doctrines, we refer to \cite{walker2019distributive,MarmolejoF:dislp}. Recall that, as shown in \cite[Thm. 35 (a)(b), or Cor. 50]{walker2019distributive}, this amounts to a lift $(\hat{T},\hat{t}, \hat{n})$ of $T$ to the category of pseudoalgebras for $S$ as in the diagram below.

\[\begin{tikzcd}[ampersand replacement=\&]
	{S\text{-Alg}} \& {S\text{-Alg}} \\
	\catk \& \catk
	\arrow["T"', from=2-1, to=2-2]
	\arrow[from=1-1, to=2-1]
	\arrow[from=1-2, to=2-2]
	\arrow["{\hat{T}}", from=1-1, to=1-2]
\end{tikzcd}\]

Moreover $\hat{T}$ is KZ too  and its unit $\hat{t}$ coincides with the unit of $T$. Furthermore, because we are assuming that $T$ is KZ, there is at most one such a $d$ \cite[Def 33, Thm. 44 and Cor. 49]{walker2019distributive} and it has to coincide with the left Kan extension below.

\[\begin{tikzcd}[ampersand replacement=\&]
	\& T \\
	ST \&\& TS
	\arrow["Ts", from=1-2, to=2-3]
	\arrow["{d \equiv Ts/sT}"', from=2-1, to=2-3]
	\arrow["sT"', from=1-2, to=2-1]
\end{tikzcd}\]

This situation is pretty common in practice. Our guiding example is that in which $S$ is the free completion under finite limits in Cat, while $T$ is a completion under a family of colimits. The reader might observe that in this specific example $S$ is coKZ, and we have not listed this one as a working assumption. We will come back to this later.

\begin{definition}[Three interesting classes of maps]
Consider the diagrams below collecting the data of the distributivity law $d$ between the pseudomonads $S$ and $T$.
\[\begin{tikzcd}[ampersand replacement=\&]
	\& 1 \&\&\& T \&\&\& S \\
	S \&\& T \& ST \&\& TS \& ST \&\& TS
	\arrow["s"', from=1-2, to=2-1]
	\arrow["t", from=1-2, to=2-3]
	\arrow["Ts", from=1-5, to=2-6]
	\arrow["d"', from=2-4, to=2-6]
	\arrow["sT"', from=1-5, to=2-4]
	\arrow["d"', from=2-7, to=2-9]
	\arrow["St"', from=1-8, to=2-7]
	\arrow["tS", from=1-8, to=2-9]
\end{tikzcd}\]

We can then define three classes of maps in $S\text{-Alg}$.

 \begin{enumerate}
     \item $\mathcal{H}_{St}$ contains the maps $St: S \to ST$.
     \item $\mathcal{H}_{tS}$ contains the maps $tS: S \to TS$. Notice that those coincide with the unit $\hat{t}$ on free $S$-algebras and thus are in the $2$-category $S\text{-Alg}$.
     \item  $\mathcal{H}_{\hat{t}}$ contains the maps $\hat{t}: 1 \to \hat{T}$.
 \end{enumerate}
\end{definition}


\begin{remark}
On a technical level, the rest of the section will be devoted to discuss the diagram below, that is to discuss the relation between the $2$-categories of Kan injectives with respect to these classes of $1$-cells.

\[\begin{tikzcd}
	{\slinj(\maps_{St})} & {\slinj(\maps_{{tS}})} & {\slinj(\maps_{\hat{t}})} & {\hat{T}\text{-Alg}} \\
	\\
	& {S\text{-Alg}}
	\arrow["{d^*}"', from=1-2, to=1-1]
	\arrow[from=1-3, to=1-2]
	\arrow[from=1-1, to=3-2]
	\arrow[from=1-2, to=3-2]
	\arrow[from=1-3, to=3-2]
	\arrow["\simeq"', from=1-4, to=1-3]
	\arrow[from=1-4, to=3-2]
\end{tikzcd}\]

As hinted by the diagram we will show that $d$ yields a forgetful functor from $\slinj(\maps_{St})$ to $\slinj(\maps_{tS})$ and that the two classes $\maps_{St}$ and $\maps_{\hat{t}}$ specify the relatable $2$-categories of Kan injectives. Already at this stage we can infer that $\slinj(\maps_{\hat{t}})$ is equivalent to $\hat{T}{\text{-Alg}}$. Indeed, the lift of a KZ monad will be KZ and thus its category of pseudoalgebras coincides with the Kan injectives with respect to the unit as observed in Theorem \ref{thm:units}.
\end{remark}

\begin{proposition}
The precomposition with $d$ defines a forgetful $2$-functor  \[d^\ast\colon \slinj(\maps_{tS}) \to  \slinj(\maps_{St}).\]

\end{proposition}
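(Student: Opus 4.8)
The plan is to realise $d^\ast$ as the identity-on-objects inclusion $\slinj(\maps_{tS})\hookrightarrow\slinj(\maps_{St})$, exhibiting, for each object, the $St$-Kan extensions as the $tS$-Kan extensions \emph{precomposed with} $d$. The starting point is the unit axiom of the distributive law recorded in the defining diagram, namely the invertible $2$-cell $d\cdot St\cong tS$ in $S\text{-Alg}$; componentwise this reads $tS_A\cong d_A\circ St_A$ for every $A$, so that each member of $\maps_{tS}$ factors through the corresponding member of $\maps_{St}$ followed by the component $d_A\colon STA\to TSA$.

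First I would settle the object level: every $X\in\slinj(\maps_{tS})$ lies in $\slinj(\maps_{St})$. This rests on a cancellation principle for Kan injectivity along \emph{laris}: if $g$ is a lari, say $g\dashv g^{r}$ with $g^{r}g\cong 1$, and $X$ is left Kan injective with respect to a composite $g\circ h$, then $X$ is left Kan injective with respect to $h$, with $f/h\cong\bigl(f/(g\circ h)\bigr)\circ g$. The unit of this candidate extension is the (invertible) universal $2$-cell $\xi^{\,g\circ h}_{f}$, and its universal property is checked by chaining the universal property of $f/(g\circ h)$ with the adjunction $(-\circ g)\dashv(-\circ g^{r})$ obtained from $g\dashv g^{r}$, using $g^{r}g\cong 1$ to match the relevant comparison $2$-cells. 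Applying this with $g=d_A$, $h=St_A$ and $g\circ h=tS_A$ yields the strong Kan extension $f/St_A\cong (f/tS_A)\circ d_A$, so $X\in\slinj(\maps_{St})$; equivalently, one is showing $St_A\in\maps_{tS}^{\sat}$ and then invoking the Galois connection of Subsection~\ref{subsec:saturation}.

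The indispensable input here --- and the step I expect to be the main obstacle --- is that each component $d_A\colon STA\to TSA$ is a lari. This is precisely where the lax-idempotency of the lift $\hat T$ and the Kan-extension description $d\equiv Ts/sT$ enter: I would isolate it as a preliminary lemma, deriving the left adjoint $e_A$ of $d_A$ together with the invertible unit $1_{STA}\cong e_A d_A$ from the cited results of Walker (the characterisation of $d$ as the unique pseudodistributive law, together with the KZ structure of $\hat T$). Everything else is formal once this property is in hand.

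Next I would treat $1$-cells and $2$-cells. A morphism $u\colon X\to Y$ of $\slinj(\maps_{tS})$ preserves $tS$-Kan extensions, i.e.\ $u(f/tS_A)\cong (uf)/tS_A$ (Remark~\ref{rem:Beck-Chev}); combining this with the formula $f/St_A\cong (f/tS_A)\circ d_A$ and associativity of composition gives
\[
u(f/St_A)\cong u\bigl((f/tS_A)\circ d_A\bigr)\cong \bigl(u(f/tS_A)\bigr)\circ d_A\cong \bigl((uf)/tS_A\bigr)\circ d_A\cong (uf)/St_A ,
\]
so $u$ preserves $St$-Kan extensions and is a morphism of $\slinj(\maps_{St})$ (its domain and codomain already belong there by the object step). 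Since both $\slinj(\maps_{tS})$ and $\slinj(\maps_{St})$ are locally full sub-$2$-categories of $S\text{-Alg}$, every $2$-cell between such morphisms is present on both sides and $d^\ast$ acts as the identity on $2$-cells. Being identity-on-objects and on $2$-cells and a mere inclusion on $1$-cells, $d^\ast$ is automatically a strict $2$-functor, strictly preserving composites and identities; it is \emph{forgetful} in that it discards the stronger injectivity with respect to $\maps_{tS}$.
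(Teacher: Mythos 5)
Your proof stands or falls with the ``preliminary lemma'' you yourself flag as the main obstacle: that each component $d_A\colon STA\to TSA$ is a lari. This lemma is false, and it fails already in the paper's guiding example of Section~\ref{sec5}. Take $S$ to be the free finite-limit completion, $T=\Phi_l$ with $\Phi$ the finite coproducts, and $A=\emptyset$. Then $TA\simeq SA\simeq 1$, so $STA\simeq \mathrm{Lex}(1)\simeq \mathrm{FinSet}^{\mathrm{op}}$ and $TSA\simeq \Phi_l(1)\simeq \mathrm{FinSet}$. The unit axiom $d\circ sT\cong Ts$ forces $d_\emptyset$ to send the generating object (the singleton) to the initial object $\emptyset$ of $\mathrm{FinSet}$; since $d_\emptyset$ preserves finite limits and every $n$ in $\mathrm{FinSet}^{\mathrm{op}}$ is the $n$-fold product of the singleton, we get $d_\emptyset(n)=\emptyset$ for $n\geq 1$ and $d_\emptyset(0)=1$. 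This functor identifies the two distinct maps $2\to 1$ of $\mathrm{FinSet}^{\mathrm{op}}$, so it is not faithful, hence not a lari (a lari has invertible unit, so is fully faithful); worse, it sends the coproduct $0\sqcup 0$ of $\mathrm{FinSet}^{\mathrm{op}}$ (which is $0$) to $1$, whereas $d_\emptyset(0)\sqcup d_\emptyset(0)=2$, so $d_\emptyset$ is not a left adjoint at all. The results of Walker that you invoke supply the identification $d\equiv Ts/sT$ and an admissibility property of $St$ --- not any adjointness of $d$. This is precisely why the paper argues differently: it identifies $d$ with the left Kan extension $tS/St$ and obtains $f/St\cong (f/tS)\circ(tS/St)$ from a composition (``Kancellation'') rule for Kan extensions, whose hypotheses (KZ-ness of $\hat T$, $tS$ being its unit on a free algebra, Bunge--Funk admissibility of $St$) are what Walker actually provides. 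So your target formula $f/St_A\cong (f/tS_A)\circ d_A$ agrees with the paper's, but the route through lari-ness of $d$ cannot work.

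A secondary problem: even for a genuine lari $g$, the justification you sketch for the cancellation lemma does not prove it. The adjunction induced on hom-categories by $g\dashv g^{r}$ is $(-\circ g^{r})\dashv(-\circ g)$, not $(-\circ g)\dashv(-\circ g^{r})$ as you wrote (this direction is exactly why $\catk(-,X)$ turns laris into ralis, cf.\ Proposition~\ref{prop:saturation-irr}); with the correct direction your chain of bijections breaks, because $f/(g\circ h)$ is not known to be of the form $(-)\circ g^{r}$. What the chain does yield is the existence half of the universal property of $\bigl(f/(g\circ h)\bigr)\circ g$; the uniqueness half needs a separate argument (one can build, from the universal property of $f/(g\circ h)$, a canonical $2$-cell $f/(g\circ h)\Rightarrow \bigl(f/(g\circ h)\bigr)\circ g\circ g^{r}$ whose whiskering with $g$ is invertible, and deduce uniqueness from that). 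So the cancellation lemma itself is true but not proved by your argument --- a repairable point, which is however moot, since the input it was meant to receive ($d$ a lari) fails.
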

\begin{proof}
The key aspect of this proof is to show that in the diagram below, when $X$ is Kan injective with respect to $tS$, the precomposition with $d$ gives us the left Kan extension $f/St$.

\[\begin{tikzcd}[ampersand replacement=\&]
	S \&\& ST \\
	\&\& TS \\
	X
	\arrow["d"', from=1-3, to=2-3]
	\arrow["St", from=1-1, to=1-3]
	\arrow["tS"', from=1-1, to=2-3]
	\arrow["f"', from=1-1, to=3-1]
	\arrow[dashed, from=2-3, to=3-1]
\end{tikzcd}\]

Indeed, if we show this, we have that every $\slinj(\maps_{tS})$ lies in $\slinj(\maps_{St})$ and we will see that a routine idea delivers the functoriality on the spot. Now, recall that by \cite[Thm 44, Cor. 49]{walker2019distributive} $d$ must coincide with $tS/St$. We are thus left with the composition \[(f/tS) \circ (tS/St)\] and we want to show that it coincides with $f/St$. But recall that $tS$ is the unit of $\hat{T}$ on a free algebra, and that $\hat{T}$ is KZ. We can thus apply the Kancellation rule \cite[Rem~2.20]{di2018accessibility} and deduce that \[(f/tS) \circ (tS/St) \cong  f/St,\] which is the thesis. Notice that to apply \cite[Rem~ 2.20]{di2018accessibility}, we need $St$ to be admissible in the sense of Bunge and Funk \cite[Def~ 4.3.2]{bunge2006singular}, this is true by \cite[Lemma 41]{walker2019distributive}.
\end{proof}

\begin{remark}
Because $\mathcal{H}_{tS} \subset \mathcal{H}_{\hat{t}}$, it follows from subsection \ref{subsec:saturation} that we have $\slinj(\maps_{\hat{t}}) \subseteq \slinj(\maps_{tS})$.
\end{remark}

\begin{proposition}
\label{prop:distr-KZ-KZ}
If $S$ is KZ, then  $\slinj(\maps_{tS}) = \slinj(\maps_{\hat{t}})$.
\end{proposition}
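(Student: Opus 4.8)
The plan is to establish the nontrivial inclusion $\slinj(\maps_{tS})\subseteq\slinj(\maps_{\hat t})$, the reverse one being exactly the preceding Remark, since $\maps_{tS}\subseteq\maps_{\hat t}$. By the Galois connection of Subsection~\ref{subsec:saturation} it suffices to prove that every unit $\hat t_{(A,a)}=t_A\colon (A,a)\to \hat T(A,a)$ of $\hat T$ belongs to the saturation $\maps_{tS}^{\sat}$ taken in $S\text{-Alg}$; from $\maps_{\hat t}\subseteq \maps_{tS}^{\sat}$ one gets $\slinj(\maps_{tS})=\slinj(\maps_{tS}^{\sat})\subseteq \slinj(\maps_{\hat t})$, and the matching of 1-cells will come for free from the same witnesses.

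First I would record the structural input coming from $S$ being KZ. As for any pseudomonad, a pseudoalgebra $(A,a)$ is the reflexive, $U$-split bicoequalizer of the free algebras $m_A,\,Sa\colon (SSA,m_{SA})\rightrightarrows (SA,m_A)$, with structure map $a\colon (SA,m_A)\to (A,a)$ as bicoequalizer and common section $Ss_A$; all of these 1-cells live in $S\text{-Alg}$. The point of lax-idempotence is that $Ss_A\dashv m_A$ is now a \emph{lari} in $S\text{-Alg}$, which is precisely the handedness under which the saturation closures of Proposition~\ref{prop:saturation-irr} (laris, reflections, compositions, bicocomma and (wide) bipushouts) are available; for a general $S$ the section would be an unadorned splitting and these clauses would not apply. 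I would then transport this presentation through the pseudonatural unit $\hat t$: naturality of $\hat t$ along $a$, $m_A$ and $Sa$ produces invertible squares tying $t_A$ to the free-algebra units $t_{SA},t_{SSA}\in\maps_{tS}$, and since $U\hat T=TU$ and every functor preserves $U$-split (absolute) bicoequalizers, $\hat T$ carries the canonical presentation of $(A,a)$ to the analogous presentation of $\hat T(A,a)$.

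Assembling these, $t_A$ is exhibited as the 1-cell induced on bicoequalizers by the map of presentations whose free stages are the units $t_{SA},t_{SSA}$; feeding this through the closure clauses of Proposition~\ref{prop:saturation-irr} places $t_A$ in $\maps_{tS}^{\sat}$. For a 1-cell $p$ of $\slinj(\maps_{tS})$ the same presentation forces preservation of the Kan extension along $t_A$ out of preservation along every $t_{SX}$, so $p$ lies in $\slinj(\maps_{\hat t})$ as well, giving the equality of the two sub-2-categories. The main obstacle is this last descent: showing that the reflexive, $U$-split presentation of $(A,a)$ by free algebras, together with the lari section provided by lax-idempotence, genuinely realises $t_A$ through the specific operations under which $\maps_{tS}^{\sat}$ is closed — equivalently, that left Kan injectivity transfers along the canonical bicoequalizer. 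This is the step where the KZ hypothesis on $S$ is indispensable, and once the compatibility between the lari-reflexive presentation, the preservation properties of the KZ lift $\hat T$, and the colimit-type closure clauses is pinned down, the statement follows formally.
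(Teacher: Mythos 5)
Your global strategy coincides with the paper's: the inclusion $\slinj(\maps_{\hat t})\subseteq\slinj(\maps_{tS})$ comes from the preceding Remark, and the substance is reduced, via the Galois connection of Subsection~\ref{subsec:saturation}, to showing $\maps_{\hat t}\subseteq\maps_{tS}^{\sat}$, whence $\slinj(\maps_{tS})=\slinj(\maps_{tS}^{\sat})\subseteq\slinj(\maps_{\hat t})$. The gap lies in how you try to establish that membership. You realise $\hat t_{(A,a)}=t_A$ as the 1-cell induced between the reflexive, $U$-split bicoequalizer presentations of $(A,a)$ and of $\hat T(A,a)$, with free stages $t_{SA}$ and $t_{SSA}$, and then propose to ``feed this through the closure clauses of Proposition~\ref{prop:saturation-irr}''. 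But that proposition contains no clause of this kind: the saturation is shown to be closed under laris, isomorphisms, composition, reflections along adjoint squares, bicocomma objects, bipushouts and wide bipushouts --- nothing about (bi)coequalizers, split, reflexive, absolute or otherwise. So the decisive step, that membership in $\maps_{tS}^{\sat}$ (equivalently, Kan injectivity of every object and 1-cell of $\slinj(\maps_{tS})$) descends along the canonical bicoequalizer presentation, is not available off the shelf; it would be a new closure theorem, roughly a bi-equalizer analogue of the wide-bipushout argument carried out on hom-categories of $S\text{-Alg}$, and you explicitly leave it unproved, calling it ``the main obstacle''. As written, the proposal is a strategy outline whose central step is missing.

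The paper's proof shows that the detour through the bar resolution is unnecessary, and that the structural input you correctly identify --- KZ-ness of $S$ makes every pseudoalgebra reflective in its free completion --- already finishes the argument in a single move. Since $S$ is KZ, the structure map $a\colon SA\to A$ is adjoint to the unit, $a\dashv s_A$, with invertible counit $a\circ s_A\cong 1_A$, and applying $T$ gives $Ta\dashv Ts_A$, again with invertible counit. Pseudonaturality of $t$ at $a$ and at $s_A$ produces exactly the two pseudocommutative squares relating $s:=t_A=\hat t_{(A,a)}$ to $h:=t_{SA}\in\maps_{tS}$ along these adjoint pairs, which is precisely the data of the Reflections clause, item (4) of Proposition~\ref{prop:saturation-irr}. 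Hence $t_A\in\maps_{tS}^{\sat}$ immediately, with no presentation of algebras, no preservation statement for $\hat T$, and no new closure property required; the conclusion, for objects and 1-cells alike, then follows exactly as in your opening paragraph, since the saturation is defined through the locally full sub-2-category $\LInj(\maps_{tS})$. If you want to salvage your route instead, you would have to state and prove closure of $\maps^{\sat}$ under 1-cells induced on (at least absolute) bicoequalizers, which is genuinely extra work compared with the one-line application of the reflection square.
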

\begin{proof}




 Together with the previous remark, it is enough to show that $\maps_{\hat{t}} \subset \maps^{\text{sat}}_{tS}$. By inspecting the diagram below, that is witnessing the fact that every $S$-algebra is reflective in its free completion, 

\[\begin{tikzcd}[ampersand replacement=\&]
	Y \&\& SY \\
	\\
	TY \&\& TSY
	\arrow[""{name=0, anchor=center, inner sep=0}, "{s_Y}"{description}, from=1-1, to=1-3]
	\arrow[""{name=1, anchor=center, inner sep=0}, "a"{description}, curve={height=18pt}, from=1-3, to=1-1]
	\arrow[""{name=2, anchor=center, inner sep=0}, "{Ts_Y}"{description}, from=3-1, to=3-3]
	\arrow[""{name=3, anchor=center, inner sep=0}, "Ta"{description}, curve={height=-18pt}, from=3-3, to=3-1]
	\arrow["{\hat{t}_Y=t_Y}"', from=1-1, to=3-1]
	\arrow["{t_{SY}}", from=1-3, to=3-3]
	\arrow["\dashv"{anchor=center, rotate=-90}, draw=none, from=1, to=0]
	\arrow["\dashv"{anchor=center, rotate=92}, draw=none, from=3, to=2]
\end{tikzcd}\]

 We see that $\hat{t}$ can be obtained from $tS$ via one of the steps that saturates $\maps_{tS}$ in Proposition \ref{prop:saturation-irr} and thus the two classes have the same saturation.
\end{proof}

\subsection{Lex colimits}
The technology of lex colimits was introduced by Garner and Lack \cite{lex} to describe a large class of structures where colimits interact with limits \textit{lex}-ly. The paradigmatic example of this behavior is Grothendieck topoi, where this phenomenon is called descent.

Let us recall very briefly their definitions to set the notation of the subsection and introduce the reader to the topic. We work in $\text{Cat}$, the $2$-category of small (but possibly large) categories, functors between them and natural transformations. This choice will rule out the very interesting case of Grothendieck topoi, and could be avoided by paying the price of a very detailed foundational analysis. We prefer to stick to small categories because the treatment will be largely cleaner.

\begin{constr}
For $\Phi$ a set of weights $ W \colon I^{\op} \rightarrow \text{Set}$ and $C$ a category, we can consider the category $ \Phi_l(C)$ as the full subcategory of the presheaf category $\text{Psh}(C)$ consisting of all $ \Phi$-weighted colimits of representables (see \cite[Sec. 3]{lex}). This construction defines a KZ-pseudomonad on $\text{Cat}$, which lifts to a KZ-pseudomonad over $\text{Lex}$, the $2$-category of small categories with finite limits and functors preserving them.
\[\begin{tikzcd}[ampersand replacement=\&]
	{\text{Lex}} \& {\text{Lex}} \\
	{\text{Cat}} \& {\text{Cat}}
	\arrow["{\Phi_l}"', from=2-1, to=2-2]
	\arrow[from=1-1, to=2-1]
	\arrow[from=1-2, to=2-2]
	\arrow["{\hat{\Phi}_l}", from=1-1, to=1-2]
\end{tikzcd}\]

Of course, this perfectly fits with the narrative of the previous subsection, indeed $\text{Lex}$ is coKZ doctrinal over $\text{Cat}$.
\end{constr}

\begin{definition}[$\Phi$-lex-cocompleteness, \cite{lex}]
$C$ is \emph{$\Phi$-lex-cocomplete} if it is lex and has colimits of shape $\Phi$, i.e. if it is lex and is a $\Phi_l$ algebra. This gives us the $2$-category $\Phi\text{-LexAlg}$, of lex categories supporting a $\Phi_l$ structure and functors preserving both finite limits and $\Phi$-colimits.
\end{definition}

\begin{definition}[$\Phi$-exactness, \cite{lex}]
Now a $\Phi$-lex-cocomplete category is said to be \emph{$\Phi$-exact} if its algebra structure $\Phi_l(C) \to C$
is lex, which amounts to saying that $C$ bears a structure of pseudoalgebra for the pseudomonad $ \Phi_l$ on $\text{Lex}$. This gives us the $2$-category $\Phi\text{-Ex}$, which is another name for $\hat{\Phi}_l\text{-Alg}$.
\end{definition}

Of course, it is easy to see that we have a fully faithful forgetful $2$-functor which is only acknowledging that the requirement of being $\Phi$-exact is stronger than being lex and $\Phi$-cocomplete.

\[\begin{tikzcd}[ampersand replacement=\&]
	{\Phi\text{-LexAlg}} \&\& {\Phi{\text{-Ex}}} \\
	\& {\text{Lex}}
	\arrow[from=1-1, to=2-2]
	\arrow[from=1-3, to=2-2]
	\arrow["U"', from=1-3, to=1-1]
\end{tikzcd}\]

We are now ready to apply our machinery to this situation, and get the following theorem.

\begin{theorem} \label{characteriation of exactness}
The following are equivalent.
\begin{enumerate}
    \item $C$ is $\Phi$-exact.
    \item $C$ is Kan injective in $\text{Lex}$ to all the maps $D \to \Phi_l(D)$.
\end{enumerate}
\end{theorem}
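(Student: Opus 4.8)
The plan is to reduce the statement to Theorem \ref{thm:units} by recognising $\Phi$-exactness as membership in the Eilenberg--Moore $2$-category of the KZ-pseudomonad $\hat{\Phi}_l$ and then reading off what its units are. First I would recall that, by definition, $\Phi\text{-Ex}=\hat{\Phi}_l\text{-Alg}$, so that $C$ is $\Phi$-exact precisely when its underlying lex category carries a pseudoalgebra structure for $\hat{\Phi}_l$ on $\text{Lex}$. The crucial structural input, already recorded in this section, is that the lift $\hat{\Phi}_l$ of the KZ-pseudomonad $\Phi_l$ along the pseudodistributive law is itself KZ, with unit $\hat{t}$ whose components are the lex functors $\hat{t}_D\colon D\to\Phi_l(D)$; these components are by definition exactly the class $\maps_{\hat{t}}$.

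The main step is then to invoke Theorem \ref{thm:units} for the KZ-pseudomonad $\hat{\Phi}_l$ over $\text{Lex}$: it provides a $2$-equivalence $\hat{\Phi}_l\text{-Alg}\simeq\slinj(\maps_{\hat{t}})$. I would emphasise that this $2$-equivalence arises from the inclusion described in the proof of Theorem \ref{thm:units}, so that on objects it identifies a pseudoalgebra with its underlying lex category, which is precisely an object left Kan injective (in $\text{Lex}$) to every component of the unit. Composing this object-level identification with the definitional equality $\Phi\text{-Ex}=\hat{\Phi}_l\text{-Alg}$ yields the two implications at once: $C$ is $\Phi$-exact if and only if $C\in\slinj(\maps_{\hat{t}})$, that is, if and only if $C$ is Kan injective in $\text{Lex}$ to all the maps $D\to\Phi_l(D)$.

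The delicate point to verify is that the object-level content of Theorem \ref{thm:units} is \emph{exactly} Kan injectivity, with no surviving side condition. A priori the description of $\hat{\Phi}_l\text{-Alg}$ demands not only that $C$ be Kan injective to $\maps_{\hat{t}}$, but also that each Kan extension $f/\hat{t}_D$ remain inside the subcategory; the redundancy of this second clause is precisely what is established within the proof of Theorem \ref{thm:units}, using that $1_C/\hat{t}_C$ is a lali together with Lemma \ref{lem:left-inj-vs-lali}. Because $\hat{\Phi}_l$ is KZ, being a pseudoalgebra is property-like, so the whole equivalence is genuinely a statement about the property ``$C$ is $\Phi$-exact'', matching the object-level formulation of the theorem.

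I expect the main obstacle to be bookkeeping rather than conceptual: confirming that the relevant units are the lex maps $\hat{t}_D\colon D\to\Phi_l(D)$ for $D$ ranging over $\text{Lex}$ (so that everything is a $1$-cell of $\text{Lex}$), and not the unrestricted $\Phi_l$-units computed in $\text{Cat}$, together with the verification just described that no condition beyond Kan injectivity is imposed. Once these identifications are in place, the theorem is a direct application of the machinery already assembled in the preceding sections, and in particular of the remark identifying $\slinj(\maps_{\hat{t}})$ with $\hat{T}\text{-Alg}$.
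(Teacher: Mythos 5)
Your proposal is correct and follows essentially the same route as the paper: the paper leaves this theorem without a written proof, noting only that it "follows directly from the previous subsection," where the key remark already identifies $\slinj(\maps_{\hat{t}})\simeq\hat{\Phi}_l\text{-Alg}$ via Theorem \ref{thm:units} (using that the lift $\hat{\Phi}_l$ of a KZ-pseudomonad along the distributive law is again KZ), and $\Phi\text{-Ex}=\hat{\Phi}_l\text{-Alg}$ holds by definition. Your additional care about the two identifications—that the relevant units are the lex functors $\hat{t}_D\colon D\to\Phi_l(D)$ computed in $\text{Lex}$ rather than in $\Cat$, and that the clause requiring $f/\hat{t}_D$ to lie in the subcategory is shown redundant inside the proof of Theorem \ref{thm:units}—is exactly the content the paper's terse reference is implicitly relying on.
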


\begin{remark}
This theorem follows directly from the previous subsection, but we shall compare it with the content of \cite[3.4]{lex}. The theorem above may seem to say the same thing of \cite[3.4]{lex}. But their Kan extensions are taken in Cat, while in our paper we compute them in Lex. The forgetful functor from Lex to Cat does not seem to preserve left Kan extensions in general, their result is thus surprising from this point of view.
\end{remark}

\begin{theorem}
$\Phi\text{-LexAlg}$ is equivalent to $\slinj(\maps_{St})$ and the forgetful functor $U$ is precisely $d^*$. 
\end{theorem}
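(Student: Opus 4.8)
The plan is to deduce both assertions from a single transfer principle for Kan injectivity along the free–forgetful biadjunction $F\dashv U\colon \text{Cat}\to S\text{-Alg}$ (here $S\text{-Alg}=\text{Lex}$ and $F$ sends a category $D$ to the free lex category on it), combined with the identifications of the previous subsection. First I would record that $\maps_{St}=\{F(t_D)\mid D\in\text{Cat}\}$ is exactly the image under $F$ of the family of units $t_D\colon D\to TD$ of the KZ-pseudomonad $T=\Phi_l$ on $\text{Cat}$, since on free algebras $(St)_D=S(t_D)=F(t_D)$.

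The technical heart is the following transfer lemma: for a $1$-cell $h\colon A\to B$ in $\text{Cat}$ and an object $X$ of $S\text{-Alg}$, the object $X$ is left Kan injective with respect to $F(h)$ if and only if $UX$ is left Kan injective with respect to $h$; and a morphism $p\colon X\to X'$ preserves the Kan extension along $F(h)$ if and only if $Up$ preserves it along $h$. To prove the object part I would use the pseudonatural equivalences $S\text{-Alg}(F(-),X)\simeq \text{Cat}(-,UX)$ supplied by the biadjunction, which exhibit $S\text{-Alg}(F(h),X)$ as a conjugate of $\text{Cat}(h,UX)$ by equivalences; since equivalences and hom-functors preserve adjunctions (as already used in Proposition~\ref{prop:closed_bilimits}), and conjugation by equivalences preserves the property of being a rali, the equivalence of the two injectivities follows. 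For the morphism part I would transport the Beck–Chevalley mate of Remark~\ref{rem:Beck-Chev} across the same equivalences: pseudonaturality in the variable $X$ makes the mate for $(F(h),p)$ invertible exactly when the one for $(h,Up)$ is.

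With the lemma in hand the first assertion is immediate. Taking $h=t_D$, an object $X$ lies in $\slinj(\maps_{St})$ iff $UX\in\slinj(\{t_D\})$, which by Theorem~\ref{thm:units} applied to $T$ means precisely that $UX$ carries $\Phi$-colimits; being moreover an $S$-algebra, $X$ is exactly a $\Phi$-lex-cocomplete category. On $1$-cells, an $S$-algebra morphism (that is, a finite-limit preserving functor) preserves the relevant Kan extensions iff its underlying functor preserves $\Phi$-colimits, so the $1$-cells of $\slinj(\maps_{St})$ are exactly those of $\Phi\text{-LexAlg}$, and local fullness handles the $2$-cells. This yields the $2$-equivalence $\Phi\text{-LexAlg}\simeq\slinj(\maps_{St})$.

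For the final claim I would combine this with the chain $\Phi\text{-Ex}=\hat{T}\text{-Alg}\simeq\slinj(\maps_{\hat{t}})\subseteq\slinj(\maps_{tS})$, where the equivalence is Theorem~\ref{thm:units} for the (KZ) lift $\hat{T}$ and the inclusion follows from $\maps_{tS}\subset\maps_{\hat{t}}$. Under these identifications the forgetful $U\colon\Phi\text{-Ex}\to\Phi\text{-LexAlg}$ and the functor $d^{\ast}$ both act as the identity on underlying $S$-algebras and underlying morphisms: $U$ merely forgets $\Phi$-exactness, while $d^{\ast}$ reinterprets a $\maps_{tS}$-injective as a $\maps_{St}$-injective through $d\cong tS/St$ and the Kancellation rule used to define it. Hence they coincide, $U$ being $d^{\ast}$ restricted along the inclusion $\slinj(\maps_{\hat{t}})\hookrightarrow\slinj(\maps_{tS})$. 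I expect the main obstacle to be the bookkeeping in the transfer lemma—checking that the biadjunction's hom-equivalences carry the adjoint and mate data of Kan injectivity faithfully. Because the biadjunction is only pseudo, every identification is an equivalence rather than an isomorphism, so one must verify the $2$-dimensional mate condition, not merely a bijection on objects.
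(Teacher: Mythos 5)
Your proposal is correct and follows what is essentially the paper's own (implicit) route: the paper states this theorem without a written proof, as a direct consequence of its Section~\ref{sec5} machinery, and your argument supplies exactly the intended ingredients --- transferring Kan injectivity across the hom-equivalences $S\text{-Alg}(F(-),X)\simeq\Cat(-,UX)$ of the free--forgetful biadjunction (so that $X\in\slinj(\maps_{St})$ iff $UX$ is Kan injective for the units $t_D$, which by Theorem~\ref{thm:units} applied to $\Phi_l$ identifies $\slinj(\maps_{St})$ with $\Phi\text{-LexAlg}$), and then reading off $U$ as $d^{*}$ along the chain $\Phi\text{-Ex}\simeq\slinj(\maps_{\hat t})\subseteq\slinj(\maps_{tS})\xrightarrow{d^{*}}\slinj(\maps_{St})$, all of whose functors are the identity on underlying objects and $1$-cells. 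You also correctly rely only on the inclusion $\slinj(\maps_{\hat t})\subseteq\slinj(\maps_{tS})$ from the paper's remark rather than on Proposition~\ref{prop:distr-KZ-KZ}, which would not apply here since the lex-completion pseudomonad $S$ is coKZ rather than KZ.
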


\section*{ Acknowledgments}

We are indebted to the anonymous referee for many valuable comments which helped us to improve the paper.
\black

\bibliography{thebib}

\begin{thebibliography}{DLO22}

\bibitem[AHS06]{adamek2006logic}
J.~Ad{\'a}mek, M.~H{\'e}bert, and L.~Sousa.
\newblock A logic of orthogonality.
\newblock {\em Archivum Mathematicum}, 42(4):309--334, 2006.

\bibitem[AR94]{adamek_rosicky_1994}
J.~Ad{\'a}mek and J.~Rosický.
\newblock {\em Locally Presentable and Accessible Categories}.
\newblock London Mathematical Society Lecture Note Series. Cambridge University
  Press, 1994.

\bibitem[ASV15]{adamek2015kan}
J.~Ad{\'a}mek, L.~Sousa, and J.~Velebil.
\newblock Kan injectivity in order-enriched categories.
\newblock {\em Mathematical Structures in Computer Science}, 25(1):6--45, 2015.

\bibitem[BF99]{BungeFunk1999}
M.~Bunge and J.~Funk.
\newblock On a bicomma object condition for {KZ}-doctrines.
\newblock {\em Journal of Pure and Applied Algebra}, 143:69--105, 1999.

\bibitem[BF06]{bunge2006singular}
M.~Bunge and J.~Funk.
\newblock {\em Singular coverings of toposes}.
\newblock Springer, 2006.

\bibitem[BG16]{bourke2016algebraic}
John Bourke and Richard Garner.
\newblock Algebraic weak factorisation systems i: Accessible awfs.
\newblock {\em Journal of Pure and Applied Algebra}, 220(1):108--147, 2016.

\bibitem[CF16]{clementino2016lax}
Maria~Manuel Clementino and Ignacio~Lopez Franco.
\newblock Lax orthogonal factorisation systems.
\newblock {\em Advances in Mathematics}, 302:458--528, 2016.

\bibitem[CS11]{carv-sousa_order-refl}
M.~Carvalho and L.~Sousa.
\newblock Order-preserving reflectors and injectivity.
\newblock {\em Topology Applications}, 156:2408--2422, 2011.

\bibitem[CS17]{carv-sousa_locales}
M.~Carvalho and L.~Sousa.
\newblock Kan-injectivity of locales and spaces.
\newblock {\em Applied Categorical Structures}, 25:83--104, 2017.

\bibitem[DL23]{di2018accessibility}
Ivan {Di Liberti} and Fosco Loregian.
\newblock Accessibility and presentability in 2-categories.
\newblock {\em Journal of Pure and Applied Algebra}, 227(1):107155, 2023.

\bibitem[DLO22]{dilib-osm_biacc}
I.~Di~Liberti and A.~Osmond.
\newblock Bi-accessible and bipresentable 2-categories.
\newblock {\em arXiv preprint arXiv:2203.07046}, 2022.

\bibitem[FK72]{freyd1972categories}
{P.~J.} Freyd and {G.~M.} Kelly.
\newblock Categories of continuous functors, {I}.
\newblock {\em Journal of pure and applied algebra}, 2(3):169--191, 1972.

\bibitem[GL12]{lex}
R.~Garner and S.~Lack.
\newblock Lex colimits.
\newblock {\em Journal of Pure and Applied Algebra}, 216(6):1372 -- 1396, 2012.

\bibitem[Gra74]{Gray:Adj}
J.~W. Gray.
\newblock Formal category theory: Adjointness for 2-categories.
\newblock In {\em Lecture Notes in Mathematics}, volume 391. 1974.

\bibitem[HS17]{hofmann-sousa_aaa}
D.~Hofmann and L.~Sousa.
\newblock Aspects of algebraic algebras.
\newblock {\em Logical Methods Computer~Sci.}, 13:25pp, 2017.

\bibitem[Joh02]{elephant1}
Peter~T. Johnstone.
\newblock {\em Sketches of an Elephant: A Topos Theory Compendium: 1 Volume
  Set}.
\newblock Oxford University Press UK, 2002.

\bibitem[Kel80]{kelly1980}
G.~M. Kelly.
\newblock A unified treatment of transfinite constructions for free algebras,
  free monoids, colimits, associated sheaves, and so on.
\newblock {\em Bulletin of the Australian Mathematical Society}, 22:1--84,
  1980.

\bibitem[Kel82]{kelly1982structures}
Gregory~M Kelly.
\newblock Structures defined by finite limits in the enriched context, i.
\newblock {\em Cahiers de topologie et g{\'e}om{\'e}trie diff{\'e}rentielle
  cat{\'e}goriques}, 23(1):3--42, 1982.

\bibitem[Koc95]{kock_1995}
A.~Kock.
\newblock Monads for which structures are adjoint to units.
\newblock {\em Journal of Pure and Applied Algebra}, 104(1):41--59, 1995.

\bibitem[KR77]{kock1977doctrines}
A.~Kock and G.~E. Reyes.
\newblock Doctrines in categorical logic.
\newblock In {\em Studies in Logic and the Foundations of Mathematics},
  volume~90, pages 283--313. Elsevier, 1977.

\bibitem[LR12]{lack2012enriched}
S.~Lack and J.~Rosick{\`y}.
\newblock Enriched weakness.
\newblock {\em Journal of Pure and Applied Algebra}, 216(8-9):1807--1822, 2012.

\bibitem[Mar99]{MarmolejoF:dislp}
F.~Marmolejo.
\newblock Distributive laws for pseudomonads.
\newblock {\em Theory and Applications of Categories}, 5, 1999.

\bibitem[ML13]{mac2013categories}
S.~Mac~Lane.
\newblock {\em Categories for the working mathematician}, volume~5.
\newblock Springer Science \& Business Media, 2013.

\bibitem[MW12]{marm-wood_lax-idemp}
F.~Marmolejo and R.~J. Wood.
\newblock Kan extensions and lax idempotent pseudomonads.
\newblock {\em Theory and Applications of Categories}, 26(1):1--29, 2012.

\bibitem[NS22]{LucatelliSousa2022}
F.~L. Nunes and L.~Sousa.
\newblock On lax epimorphisms and the associated factorization.
\newblock {\em Journal of Pure and Applied Algebra}, 226, 2022.

\bibitem[Rie17]{riehl2017category}
E.~Riehl.
\newblock {\em Category theory in context}.
\newblock Courier Dover Publications, 2017.

\bibitem[Str76]{street1976limits}
Ross Street.
\newblock Limits indexed by category-valued 2-functors.
\newblock {\em Journal of Pure and Applied Algebra}, 8(2):149--181, 1976.

\bibitem[Str81]{Street1981}
R.~Street.
\newblock Conspectus of variable categories.
\newblock {\em Journal of Pure and Applied Algebra}, 21(3):307--338, 1981.

\bibitem[Str14]{street2014pointwise}
R.~Street.
\newblock Pointwise extensions and sketches in bicategories.
\newblock {\em arXiv preprint arXiv:1409.6427}, 2014.

\bibitem[Wal19]{walker2019distributive}
C.~Walker.
\newblock Distributive laws via admissibility.
\newblock {\em Applied Categorical Structures}, 27(6):567--617, 2019.

\bibitem[Z{\"o}b76]{zoberlein1976doctrines}
V.~Z{\"o}berlein.
\newblock Doctrines on 2-categories.
\newblock {\em Mathematische Zeitschrift}, 148(3):267--279, 1976.

\end{thebibliography}
 \bibliographystyle{alpha}

\end{document}